\renewcommand\NAT@citesuper[3]{\ifNAT@swa
\if*#2*\else#2\NAT@spacechar\fi
\unskip\kern\p@\textsuperscript{\NAT@@open#1\if*#3*\else,\NAT@spacechar#3\fi\NAT@@close}%
   \else #1\fi\endgroup}
\renewcommand{\div}{\text{div}}
\newcommand{\sfrei}{}
\newcommand{\red}{\color{red}}
\newcommand{\bl}{\color{blue}}
\renewcommand{\kappa}{\varkappa}
 \newcommand{\jump}[1]{\llbracket #1 \rrbracket }
\newtheorem{lemma}{Lemma}[section]
\newtheorem{theorem}[lemma]{Theorem}
\newtheorem{corollary}[lemma]{Corollary}
\newtheorem{remark}[lemma]{Remark}
\newtheorem{assumption}[lemma]{Assumption}
\newtheorem*{acknowledgement}{Acknowledgement}
\renewcommand{\u}{\bm{u}}
\newcommand{\w}{\bm{w}}
\newcommand{\e}{\bm{e}}
\newcommand{\z}{\bm{z}}
\newcommand{\n}{\bm{n}}
\renewcommand{\v}{\bm{v}}
\newcommand{\f}{\bm{f}}
\newcommand{\T}{\bm{T}}
\newcommand{\etab}{\bm{\eta}}
\newcommand{\xib}{\bm{\xi}}
\newcommand{\psib}{\bm{\psi}}
\newcommand{\phib}{\bm{\phi}}
\begin{document}

\title{Eulerian time-stepping schemes for the non-stationary Stokes equations on time-dependent domains}

\author{Erik Burman\thanks{Department of Mathematics, University College London, Gower Street, WC1E 6BT, London, UK
(e.burman@ucl.ac.uk)}
\and Stefan Frei\thanks{Department of Mathematics and Statistics, University of Konstanz,
Universit\"atsstra\ss e 10, 78457 Konstanz, Germany}
 \and
Andre Massing\thanks{
Department of Mathematical Sciences, Norwegian University of Science and Technology, NO-7491 Trondheim, Norway, (andre.massing@ntnu.no)}
\thanks{Department of Mathematics and Mathematical Statistics, Ume{\aa} University, SE-90187 Ume{\aa}, Sweden,
  (andre.masssing@umu.se)
}
}

\date{\today}

\maketitle

\begin{abstract}
 This article is concerned with the discretisation of the Stokes equations on time-dependent domains in an Eulerian coordinate framework.
 Our work can be seen as an extension of a recent paper by Lehrenfeld \& Olshanskii [ESAIM: M2AN, 53(2):\,585-614, 2019], where BDF-type 
 time-stepping schemes are studied for a parabolic equation on
 moving domains. For space discretisation, a geometrically unfitted finite element discretisation is applied in combination with Nitsche's method
 to impose boundary conditions.
 Physically undefined values of the solution at previous time-steps are extended implicitly by means of 
 so-called \textit{ghost penalty} stabilisations. We derive a complete \textit{a priori} error analysis of the discretisation error
 in space and time, including optimal $L^2(L^2)$-norm error bounds for the velocities. Finally, the theoretical results are substantiated
 with numerical examples.
\end{abstract}

\section{Introduction}

Flows on moving domains $\Omega(t)\subset \mathbb{R}^d$ ($d=2,3$) need to be considered in many different applications. Examples include 
particulate flows 
or flows around moving objects like biological or mechanical valves, wind turbines or parachutes. Strongly related problems are fluid-structure interactions or multi-phase flows.

There exists a vast literature on time discretisation of the 
non-stationary Stokes or Navier-Stokes equations on fixed domains, 
see for example the classical works of Girault \& Raviart~\cite{GiraultRaviart79}, 
Baker et al~\cite{BakerDougalisKarakashian} and Rannacher \& Heywood~\cite{HeywoodRannacherIV}, 
or more recently Bochev et al.~\cite{Bochevetal2007} and 
Burman \& Fern\'andez~\cite{BurmanFernandez2007, BurmanFernandez2008} in the context 
of stabilised finite element methods. If the computational domain remains unchanged in each time-step, the same 
spatial discretisation can be used (unless adaptive mesh 
refinement is considered) and finite difference schemes based on the method of lines can be applied for time discretisation.

In the case of moderate domain movements, these techniques can be transferred to the moving framework by using the \textit{Arbitrary Lagrangian Eulerian} (ALE) approach~\cite{Noh1964, FrankLazarus1964, DoneaSurvey2004}.
Here, the idea is to  formulate an equivalent system of equations on a fixed reference 
configuration $\hat{\Omega}$, for example the initial configuration $\Omega(0)$, by means of a time-dependent map $\T(t):\hat{\Omega} \to \Omega(t)$. This technique  
has been used widely for flows on moving domains, see e.g,.~\cite{DoneaSurvey2004, CauchaFreiRubio2018} and fluid-structure interactions~\cite{BazilevsBook, RichterBuch, FSISammelband}. 
The analysis of the time discretisation error is then very similar to the fixed framework, as all
quantities and equations are formulated on the same reference domain $\hat{\Omega}$, see e.g.~\cite{RichterWick2015}. {\sfrei For a detailed stability analysis of ALE formulations, we refer to Nobile \& Formaggia~\cite{NobileFormaggia99} and Boffi \& Gastaldi~\cite{BoffiGastaldi2004}.}

On the other hand, it is well-known that the ALE method is less practical in the case of large domain 
deformations~\cite{RichterBuch, FreiPhD}. This is due to the degeneration of mesh 
elements both in a finite element and a finite difference context. A re-meshing of the domain $\Omega(t)$ becomes necessary. Moreover, topology changes, for example due to contact of particles within the flow or of a particle 
with an outer wall~\cite{BurmanFernandezFrei2018}, 
are not allowed, as the map between $\hat{\Omega}$ and $\Omega(t)$ can not have the required regularity in this situation.

In such cases an Eulerian formulation of the problem formulated on the moving domains $\Omega(t)$ is preferable. This is also the standard coordinate framework for the simulation of multi-phase flows.
In the last years a variety of space discretisation techniques have been designed to resolve curved or moving boundaries accurately. Examples include 
the cut finite element method~\cite{HansboHansbo2002, Massing2012, CutFEM2015, MassingLarsonLoggEtAl2013a,BurmanClausMassing2015,
MassingSchottWall2017,GuzmanOlshanskii2016} 
within a 
fictitious domain approach, extended finite elements~\cite{XFEM,GrossReusken2007,HansboLarsonZahedi2014a,ChessaBelytschko2003} or locally fitted finite element techniques~\cite{FreiRichter14}, to name such a few of the approaches. 

Much less analysed 
is a proper time discretisation of the problem.
In the case of moving domains, standard time discretisation based on the method of lines is not applicable in a straight-forward way. 
The reason is that the domain of definition of the variables changes from time step to time step.

As an example consider the \textit{backward Euler} discretisation of the time derivative within a variational formulation
\begin{align*}
 (\partial_t u_h(t_n), \phi_h^n)_{\Omega(t_n)} \approx \frac{1}{\Delta t} (u_h(t_n)- u_h(t_{n-1}), \phi_h^n)_{\Omega(t_n)}.
\end{align*}
Note that $u_h(t_{n-1})$ is only well-defined on $\Omega(t_{n-1})$, but is needed on $\Omega(t_n)$.

One solution to this dilemma are so-called characteristic-based approaches~\cite{HechtPironneau2016}. Similar time-stepping schemes result when applying the ALE method only locally within 
one time-step and projecting the system back to the original reference frame after each step~\cite{Codina2009}, or based on Galerkin time discretisations with modified Galerkin spaces~\cite{FreiRichter17}.
The disadvantage of these approaches is a 
projection between the domains $\Omega(t_{n-1})$ and $\Omega(t_n)$ that needs to be computed within each or after a certain number of steps.

Another possibility consists of space-time approaches~\cite{HansboLarsonZahedi2016, Lehrenfeld2015}, where a $d+1$-dimensional domain is discretised if $\Omega(t)\subset \mathbb{R}^d$. 
The computational requirements of these approaches 
might, however, exceed the available computational resources, in particular within complex three-dimensional applications. 
Moreover, the implementation of higher-dimensional discretisations and accurate quadrature formulas pose additional challenges. 

A simpler approach has been proposed recently in the dissertation of Schott~\cite{SchottDiss} and 
{\sfrei by Lehrenfeld \& Olshanskii~\cite{LehrenfeldOlshanskii}}. Here, the idea is to define
extensions of the solution $\u_h(t_{n-1})$ from the previous time-step to a domain that spans at least $\Omega(t_n)$. On the finite element level these extensions can be incorporated implicitly in the 
time-stepping scheme by using so-called \textit{ghost penalty} stabilisations~\cite{Burman2010} to a sufficiently large domain $\Omega_\delta(t_{n-1}) \supset \Omega(t_n)$. These techniques have originally been proposed to extend 
the coercivity of elliptic bilinear forms from the physical to the computational domain in the context of CutFEM or fictitious domain approaches~\cite{Burman2010}.

{\sfrei While Schott used such an extension explicitly after each time step to define values for $\u_h(t_{n-1})$ in mesh nodes lying in $\Omega(t_n)\setminus \Omega(t_{n-1})$, Lehrenfeld \& Olshanskii included the extension operator implicitly within each time step by solving a combined discrete system including the extension operator on the larger computational domain $\Omega_\delta(t_n)$. For the latter approach a complete analysis could be given 
for the
corresponding backward Euler time discretisation, showing first-order convergence in time in the spatial energy norm~\cite{LehrenfeldOlshanskii}.} 
Moreover, the authors gave hints on how to transfer the argumentation 
to a backward difference scheme (BDF(2)), which results in second-order convergence. We should also mention that similar time discretisation 
techniques have been used previously in the context of surface PDEs~\cite{OlshanskiiXu2017, LehrenfeldOlshanskiiXu2018},
and mixed-dimensional surface-bulk problems~\cite{HansboLarsonZahedi2016}
on moving domains.

In this work, we apply such an approach to the discretisation of the non-stationary Stokes equations on a moving domain, including a complete analysis of the space and time discretisation errors. Particular problems are related to 
the approximation of the pressure variable.
It is well-known that stability of the pressure is lost in the case of fixed domains, when the discretisation changes from one time-step to another. This can already be observed, when the finite element mesh is refined 
or coarsened globally at some instant of time, see Besier \& Wollner~\cite{BesierWollner} and is due to the fact that the old solution 
$\u_h^{n-1}:=\u_h(t_{n-1})$ is not discrete divergence free with 
respect to the new mesh. Possible remedies include the use of Stokes or Darcy projections~\cite{BesierWollner, BraackTaschenberger2013} to pass $\u_h^{n-1}$
to the new mesh. Our analysis will reveal that similar issues hold true for the case of moving domains, even if the same discretisation is used on $\Omega(t_n) \cap \Omega(t_{n-1})$. The 
reason is that $\u_h^{n-1}$ is discrete divergence-free with respect to $\Omega(t_{n-1})$, but not with respect to $\Omega(t_n)$
\begin{align*}
 ({\rm div }\, \u_h^{n-1}, \phi_h)_{\Omega(t_{n-1})} = 0, \quad \text{but } ({\rm div }\, \u_h^{n-1}, \phi_h)_{\Omega(t_n)} \neq 0
\end{align*}
for certain $\phi_h \in {\cal V}_h$.

For space discretisation, we will use the Cut Finite Element framework~\cite{HansboHansbo2002}. The idea is to discretise a larger domain of simple structure in the spirit of the 
Fictitious Domain approach. The active degrees of freedom consist of all degrees of freedom in mesh elements with non-empty intersection with $\Omega_\delta(t_n)$. 
Dirichlet boundary conditions are incorporated by means of Nitsche's method~\cite{Nitsche70}.

We will consider both the BDF(1)/backward Euler and the BDF(2) variant of the approach. {\sfrei To simplify the presentation of the analysis, we will neglect quadrature errors related to the approximation of curved boundaries and, moreover, focus on the BDF(1) variant. The necessary modifications for the BDF(2) variant will be sketched within remarks.} Finally, we will use a duality technique 
to prove an optimal $L^2(L^2)$-norm estimate for the velocities.

The structure of this article is as follows: In Section~\ref{sec.equations} we introduce the equations and sketch how to prove 
the well-posedness of the system. Then we introduce time and space 
discretisation in Section~\ref{sec.disc}, including the extension operators and assumptions, that will be needed in the stability analysis of Section~\ref{sec.stab} and the error 
analysis in Section~\ref{sec.error}.
Then, we give some three-dimensional numerical results in Section~\ref{sec.num}. We conclude 
in Section~\ref{sec.concl}.

\section{Equations}
\label{sec.equations}

We consider the non-stationary Stokes equations {\sfrei with homogeneous Dirichlet boundary conditions} on a moving domain $\Omega(t)\subset \mathbb{R}^d, d=2,3$ for $t\in I=[0,t_{\text{fin}}]$ 
{\sfrei
\begin{eqnarray}
 \begin{aligned}\label{StokesNormalised}
\partial_ t \u -  \Delta \u + \nabla p &= \f,\qquad & {\rm div } \, \u &=0\qquad \text{ in } \Omega(t),\\
 \u=0 \quad \text{ on } &\partial\Omega(t), \qquad &
\u(x, 0)&= \u^0(x) \text{ in } \Omega(0). 
\end{aligned}
\end{eqnarray}}
We assume that the domain motion can be described by a $W^{1,\infty}$-diffeomorphism 
\begin{align}\label{map}
 \T(t): \Omega(0) \to \Omega(t).
\end{align}
with the additional regularity
\begin{align}\label{mapreg}
\T\in W^{1,\infty}(I,{\sfrei W^{2,\infty}(\Omega(0))}).
\end{align}

and that the initial domain $\Omega(0)$ is piecewise smooth and Lipschitz.
In order to formulate the variational formulation we define the spaces
\begin{eqnarray}
\begin{aligned}\label{spaces}
 {\cal V}(t) &:= H^1_0(\Omega(t))^d, \qquad\; {\cal L}(t) := L^2(\Omega(t)), \qquad\; {\cal L}_0(t) := L^2_0(\Omega(t)), \\
 {\cal V}_I &:= \{ \u\in L^2(I, {\cal V}(t))^d, \; \partial_t \u \in L^2(I, {\sfrei {\cal L}(t)}^d)\}, \quad {\cal L}_{0,I} := L^2(I, {\cal L}_0(t)).
 \end{aligned}
 \end{eqnarray}
 and 
 consider the variational formulation: \textit{Find } $\u\in {\cal V}_I, p \in {\cal L}_{0,I}$ such that
 \begin{eqnarray}
\begin{aligned}\label{Stokes}
 (\partial_t \u, \v)_{\Omega(t)} + {\cal A}_S(\u,p;\v,q) &= (\f, \v)_{\Omega(t)} \quad \forall \v \in {\cal V}(t), \; q \in {\cal L}(t) \quad \text{ a.e. in } t \in I,\\
 \u(x,0) &= \u^0(x) \qquad \text{a.e. in } \Omega(0),
\end{aligned}
\end{eqnarray}
where 
\begin{align}
  {\cal A}_S(\u,p;\v,q):=   (\nabla \u, \nabla \v)_{\Omega(t)} - (p,\div \,\v)_{\Omega(t)} 
+ (\div \, \u, q)_{\Omega(t)}.
\end{align}
We assume that {\sfrei $\f \in L^{\infty}(I,{\cal L}(t)^d)$} a.e.$\,$in $t\in I$ and {\sfrei $\u^0 \in H^1(\Omega(0))^d$}.

{\sfrei 
\begin{remark}{(Boundary conditions)}
It might seem unnatural at first sight to use homogeneous Dirichlet boundary conditions for a Stokes problem on a moving domain $\Omega(t)$. In fact the assumption that the flow follows the domain motion on $\partial\Omega(t)$ would be a more realistic boundary condition, i.e.
\begin{eqnarray}
\begin{aligned}\label{StokesNonhom}
\partial_ t \tilde{\u} -  \Delta \tilde{\u} + \nabla p &= \tilde{\f},\qquad
 {\rm div } \, \tilde{\u} =0 \qquad \text{in } \Omega(t)\\
 \tilde{\u}&=\partial_t \T{\sfrei (\T^{-1})} \text{ on } \partial\Omega(t).
 \end{aligned}
 \end{eqnarray}
 Note, however, that for a sufficiently smooth map $\T(t)$ that fulfils ${\rm div} (\partial_t \T{\sfrei (\T^{-1})}) =0$, one obtains \eqref{StokesNormalised} from~\eqref{StokesNonhom} for $\u=\tilde{\u}-\partial_t \T{\sfrei (\T^{-1})}$ and
$\f:=\tilde{\f} +\partial_t^2 \T{\sfrei (\T^{-1})} - \Delta (\partial_t \T{\sfrei (\T^{-1})})$. For this reason and in order to simplify the presentation of the error analysis, we will consider homogeneous Dirichlet conditions in the remainder of this article.
\end{remark}}

\subsection{Well-posedness}
\label{sec.wellp}

As the spaces in \eqref{spaces} are lacking a tensor product structure, {\sfrei the proof of well-posedness of~\eqref{Stokes} is more complicated than on a fixed domain. In the case of a fixed domain existence and uniqueness of solutions can be shown under weaker regularity assumptions on the data $f$ and $u^0$ and the domain $\Omega(t)$ in the velocity space
\begin{align*}
\tilde{\cal V}_I &:= \{ \u\in L^2(I, {\cal V})^d, \; \partial_t \u \in L^2(I, ({\cal V}^*)^d)\},
\end{align*}
where ${\cal V}^*$ is the dual space to ${\cal V}$. Low regularity is, however, not of interest for the present paper, as we will require additional regularity of the solution in the error estimates. On the other hand, working with the space ${\cal V}_I$ under the additional regularity assumptions made above simplifies the proof of well-posedness of~\eqref{Stokes} significantly. 

The well-posedness of the Navier-Stokes problem on time-dependent domains, including an additional nonlinear convective term has in fact been the subject of a number of papers in literature~\cite{Bock1977, Salvi1988}. In order to deal with the additional non-linearity, additional assumptions on the regularity of the domains are typically made. For completeness, we give a proof of the following Lemma~\eqref{Stokes} in the appendix under the regularity assumptions made above. }

{\sfrei
\begin{lemma}\label{lem.wellposed}
Let $\Omega(0)$ be piecewise smooth and Lipschitz and $\T(t)$ a $W^{1,\infty}(\Omega(0))$ diffeomorphism with regularity $\T\in W^{1,\infty}(I,W^{2,\infty}(\Omega(0))$. For $\f \in L^{\infty}(I,{\cal L}(t)^d)$ and $\u^0 \in H^1(\Omega(0))^d$, Problem~\eqref{Stokes} has a unique solution $\u\in {\cal V}_I, p \in {\cal L}_{0,I}$.
\end{lemma}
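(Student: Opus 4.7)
The natural plan is to reduce to a fixed-domain problem via the ALE map $\T(t)$ and then apply classical Galerkin theory. Concretely, I would introduce the pull-backs $\hat{\u}(\hat x,t) := \u(\T(\hat x,t),t)$ and $\hat p(\hat x,t) := p(\T(\hat x,t),t)$ on $\Omega(0)$. Using the chain rule and the regularity~\eqref{mapreg}, which guarantees that $\det(\nabla\T)$ is uniformly bounded above and away from zero on $I\times\Omega(0)$ and that $\nabla\T^{-1}\in W^{1,\infty}$, the system~\eqref{Stokes} transforms into a Stokes-like problem on the fixed reference domain
\begin{align*}
J\,\partial_t \hat\u + \hat{\cal B}(t)\hat\u + \hat{\cal G}(t)\hat p &= J\hat{\f}, \qquad \widehat{\mathrm{div}}(t)\,\hat\u = 0,
\end{align*}
where $J=\det(\nabla\T)$ and $\hat{\cal B}(t),\hat{\cal G}(t),\widehat{\mathrm{div}}(t)$ are linear operators with coefficients built from $\nabla\T$, $(\nabla\T)^{-1}$ and $\partial_t\T$ (the $\partial_t\T\cdot\nabla$ term arising from differentiating $\hat\u(\T^{-1}(x,t),t)$ in $t$). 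The crucial point is that under~\eqref{mapreg} these coefficients lie in $L^\infty(I;W^{1,\infty}(\Omega(0)))$, so $\hat{\cal B}(t)$ is uniformly elliptic on $H^1_0(\Omega(0))^d$ and the divergence operator remains surjective onto $L^2_0(\Omega(0))$ with a uniform-in-$t$ inf-sup constant.

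Now the spaces $\hat{\cal V} := H^1_0(\Omega(0))^d$ and $\hat{\cal L}_0 := L^2_0(\Omega(0))$ are \emph{fixed}, so the problem has the standard tensor-product structure. I would proceed by a Faedo--Galerkin argument: pick a countable basis of $\hat{\cal V}$ consisting of eigenfunctions of the reference Stokes operator, construct finite-dimensional approximations $\hat\u_N$ satisfying the projected, divergence-free form (or equivalently work with the augmented saddle point formulation), and close the standard energy estimate
\begin{align*}
\tfrac{1}{2}\tfrac{d}{dt}\|J^{1/2}\hat\u_N\|_{L^2}^2 + c\|\nabla\hat\u_N\|_{L^2}^2 \le C\bigl(\|\hat{\f}\|_{L^2}^2 + \|\hat\u_N\|_{L^2}^2\bigr),
\end{align*}
using Grönwall to obtain uniform bounds in $L^\infty(I;L^2)\cap L^2(I;H^1_0)$. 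Since $\partial_t \hat\u_N$ can be bounded in $L^2(I;L^2)$ using the equation and the assumption $\hat\u^0 \in H^1(\Omega(0))^d$ together with $\hat{\f}\in L^\infty(I;L^2)$, Aubin--Lions compactness yields a limit $\hat\u\in \hat{\cal V}_I$.

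For the pressure I would recover $\hat p$ from the momentum equation using the uniform inf-sup condition for $(\widehat{\mathrm{div}}(t),\hat{\cal V},\hat{\cal L}_0)$, which follows from the classical inf-sup on $\Omega(0)$ and the $W^{1,\infty}$-control of the transformation; this gives $\hat p\in L^2(I;\hat{\cal L}_0)$. Pushing $(\hat\u,\hat p)$ forward by $\T(t)$ produces a pair in ${\cal V}_I\times {\cal L}_{0,I}$ solving~\eqref{Stokes}. Uniqueness is immediate from the linear energy estimate applied to the difference of two solutions.

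The main obstacle I expect is the \emph{pressure part}: the transformed incompressibility constraint is $\widehat{\mathrm{div}}(t)\hat\u = 0$ rather than $\nabla\cdot\hat\u = 0$, so the usual ``divergence-free Galerkin basis'' trick is not directly available. One must either work in the saddle-point formulation and verify a uniform inf-sup condition for $\widehat{\mathrm{div}}(t)$ (which is what I would do, using $(\nabla\T)^{-1}\in L^\infty$ to compare with the standard divergence), or construct a time-dependent divergence-free lifting. A secondary technical point is the correct handling of $\partial_t$ under the change of variables so that initial data $\u^0$ translate to $\hat\u(\cdot,0) = \u^0$ in $H^1(\Omega(0))^d$; this is straightforward given~\eqref{mapreg}.
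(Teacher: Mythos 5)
Your overall strategy---pull back to $\Omega(0)$ by the ALE map, run a Faedo--Galerkin argument with an energy estimate and Gronwall, bound the time derivative, and recover the pressure afterwards---is exactly the route the paper takes, and your diagnosis of the main obstacle (the transformed constraint $\widehat{\mathrm{div}}(JF^{-1}\hat{\u})=0$ is time-dependent) is on target. However, the two points you leave open are precisely where the paper's proof does its work, and your preferred way around them would not go through as painlessly as you suggest. For the Galerkin construction the paper does \emph{not} pass to a saddle-point formulation with a uniform inf-sup: it stays in the constrained (divergence-free) setting and builds a basis of the time-dependent space $\hat{\cal V}_0(t)$ by the inverse Piola transform $\hat{\w}_j(t)=J(t)^{-1}F(t)\hat{\phi}_j$ of a fixed $L^2$-orthonormal basis of $\hat{\cal V}_0(0)$; this is your ``time-dependent divergence-free lifting'' option made concrete, and it reduces each Galerkin level to an ODE system with an invertible mass matrix $M(t)$, solved by Picard--Lindel\"of. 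Your alternative---discretising the augmented saddle-point system with Stokes eigenfunctions---yields a differential-algebraic system whose solvability needs a \emph{discrete} inf-sup compatible velocity--pressure pair at every level, and the time-dependent constraint produces commutator terms involving the discrete pressure in the energy identity; none of this is addressed in your sketch.

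The second, more serious gap is the claim that $\partial_t\hat{\u}_N$ ``can be bounded in $L^2(I;L^2)$ using the equation.'' From the equation alone you only control $\partial_t\hat{\u}_N$ against admissible test functions, i.e.\ in a dual norm, which is not enough since ${\cal V}_I$ requires $\partial_t\u\in L^2(I,{\cal L}(t)^d)$. The standard fix is to test with the time derivative (this is where $\u^0\in H^1$ enters), but here $\partial_t\hat{\u}_l$ is in general \emph{not} in the span of the time-dependent basis, so it is not an admissible test function; the paper instead tests with $J^{-1}F\,\partial_t(JF^{-1}\hat{\u}_l)$, which does lie in the span, and then controls the additional terms generated by $\partial_t(JF^{-1})$ using the assumed $W^{1,\infty}(I,W^{2,\infty}(\Omega(0)))$ regularity of $\T$ before applying Gronwall a second time. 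Your pressure recovery via a uniform inf-sup for the transformed divergence is fine and essentially equivalent to the paper's appeal to the de Rham theorem on $\Omega(t)$, but without the two steps above the existence of a velocity with the regularity demanded by ${\cal V}_I$ is not established.
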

\begin{proof}
A proof is given in the appendix.
\end{proof}}

\section{Discretisation}
\label{sec.disc}

For discretisation in time, we split the time interval of interest $I=[0,t_{\text{fin}}]$ into time intervals $I_n=(t_{n-1}, t_n]$ of 
uniform step size $\Delta t=t_n - t_{n-1}$
\begin{align*}
 0=t_0 < t_1 < ...< t_N = t_{\text{fin}}.
\end{align*}
We follow the work of Lehrenfeld \& Olshanskii~\cite{LehrenfeldOlshanskii} for parabolic problems on moving domains and 
use BDF(s) discretisation for $s=1,2$, where $s=1$ corresponds to a backward Euler time discretisation. 
Higher-order BDF formulae are not considered here, due to their lack of $A$-stability~\cite{HairerNorsettWanner1991}.
Following Lehrenfeld \& Olshanskii~\cite{LehrenfeldOlshanskii} we extend the domain $\Omega^n:= \Omega(t_n)$ in each time point $t_n$ 
by a strip of size $\delta$
to a domain $\Omega_\delta^n$, which is chosen large enough such that
\begin{align}\label{Omegani}
 \bigcup_{i=0}^s \Omega^{n+i} \subset \Omega_{\delta}^n,
\end{align}
{\sfrei see also the left part of Figure~\ref{fig.disc}.}
{\sfrei In particular, we will allow
\begin{equation}\label{defDelta}
s w_{\rm max} \Delta t \,\leq\, \delta \leq c_{\delta} s w_{\rm max} \Delta t,
\end{equation}
where 
\begin{align*}
 w_{\rm max} := \max_{t\in I, x \in \partial\Omega(0)} \|\partial_t T(x,t)\cdot \n\|
\end{align*}
is the maximum velocity of the boundary movement in normal direction in the Euclidean norm $\|\cdot\|$ and $c_\delta>1$ is a constant. 
If we assume that the domain map $\T$ lies in $W^{1,\infty}(I, L^{\infty}(\Omega(0)))$ (see Assumption~\ref{ass.T} below), the lower bound on $\delta$ in \eqref{defDelta} guarantees \eqref{Omegani}.}

{\sfrei 
The space-time slabs defined by the time discretisation and the space-time domain are denoted by
\begin{align*}
Q^n := \underset{t\in I_n}{\cup} \{t\}\times \Omega(t), \qquad Q_\delta^n := \underset{t\in I_n}{\cup} \{t\}\times \Omega_\delta(t), \qquad Q := \underset{t\in I}{\cup} \{t\}\times \Omega(t) . 
\end{align*}}
{\sfrei In what follows we denote by $c$ generic positive constants. These are in particular independent of space and time discretisation ($\Delta t, N$ and $h$) and of domain velocity $w_{\max}$ and $\delta$, unless such a dependence is explicitly mentioned.}

\begin{figure}
\resizebox*{0.3\textwidth}{!}{
\begin{picture}(0,0)%
\includegraphics{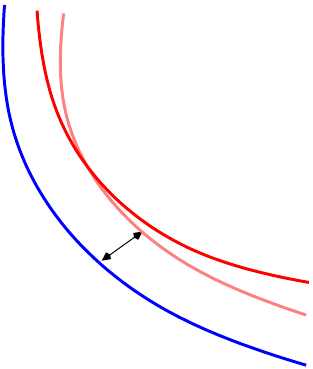}%
\end{picture}%
\setlength{\unitlength}{1243sp}%
\begingroup\makeatletter\ifx\SetFigFont\undefined%
\gdef\SetFigFont#1#2{%
  \fontsize{#1}{#2pt}%
  \selectfont}%
\fi\endgroup%
\begin{picture}(4749,5578)(2541,-7350)
\put(3826,-2716){\makebox(0,0)[lb]{\smash{{\SetFigFont{5}{6.0}{\color[rgb]{1,.5,.5}$\Omega^{n}$}%
}}}}
\put(4006,-5416){\makebox(0,0)[lb]{\smash{{\SetFigFont{5}{6.0}{\color[rgb]{0,0,0}$\delta$}%
}}}}
\put(5086,-6316){\makebox(0,0)[lb]{\smash{{\SetFigFont{5}{6.0}{\color[rgb]{0,0,1}$\Omega_{\delta}^n$}%
}}}}
\put(6256,-5416){\makebox(0,0)[lb]{\smash{{\SetFigFont{5}{6.0}{\color[rgb]{1,0,0}$\Omega^{n+1}$}%
}}}}
\end{picture}%
}\hspace{0.5cm}
\resizebox*{0.5\textwidth}{!}{
\begin{picture}(0,0)%
\includegraphics{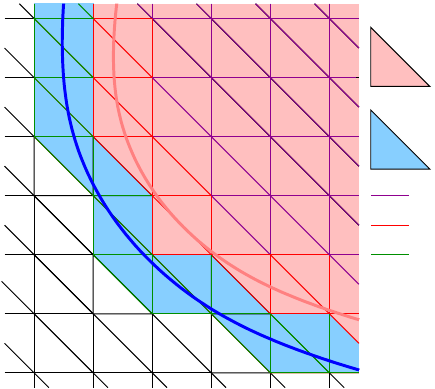}%
\end{picture}%
\setlength{\unitlength}{1243sp}%
\begingroup\makeatletter\ifx\SetFigFont\undefined%
\gdef\SetFigFont#1#2{%
  \fontsize{#1}{#2pt}%
  \selectfont}%
\fi\endgroup%
\begin{picture}(6569,5916)(1284,-7508)
\put(7651,-5056){\makebox(0,0)[lb]{\smash{{\SetFigFont{5}{6.0}{\color[rgb]{1,0,0}$e\in {\cal F}_h^{n,cut}$}%
}}}}
\put(7651,-5506){\makebox(0,0)[lb]{\smash{{\SetFigFont{5}{6.0}{\color[rgb]{0,.56,0}$e\in {\cal F}_{h,\delta}^{n,ext}$}%
}}}}
\put(7651,-4606){\makebox(0,0)[lb]{\smash{{\SetFigFont{5}{6.0}{\color[rgb]{.56,0,.56}$e\in {\cal F}_h^{n,int}$}%
}}}}
\put(7561,-2496){\makebox(0,0)[lb]{\smash{{\SetFigFont{5}{6.0}{\color[rgb]{0,0,0}$K\in {\cal T}_{h,\delta}^n$}%
}}}}
\put(7561,-2086){\makebox(0,0)[lb]{\smash{{\SetFigFont{5}{6.0}{\color[rgb]{0,0,0}$K\in{\cal T}_h^n$ and}%
}}}}
\put(7561,-3571){\makebox(0,0)[lb]{\smash{{\SetFigFont{5}{6.0}{\color[rgb]{0,0,0}$K\in {\cal T}_{h,\delta}^n$}%
}}}}
\put(6841,-6541){\makebox(0,0)[lb]{\smash{{\SetFigFont{5}{6.0}{\color[rgb]{1,.5,.5}$\partial\Omega^n$}%
}}}}
\put(6841,-7261){\makebox(0,0)[lb]{\smash{{\SetFigFont{5}{6.0}{\color[rgb]{0,0,1}$\partial\Omega_{\delta}^n$}%
}}}}
\end{picture}%
}
\caption{\textit{Left:} Illustration of $\Omega_\delta^n$ for $s=1$. \textit{Right:} Illustration of the discretisation and faces.\label{fig.disc}}
\end{figure}

\subsection{Space discretisation}

Let ${\cal T}_{h,\delta}^n$ be a family of (possibly unfitted) quasi-uniform spatial discretisations of $\Omega_{\delta}^n$ into simplices with maximum cell size $h$.
We assume that ${\cal T}_{h,\delta}^n$ is based on a common background triangulation ${\cal T}_h$ for all $n$ and may differ 
only in the elements outside $\Omega(t_n)$ that are not present in ${\cal T}_{h,\delta}^k$ for $k\neq n$.
Further, we assume that ${\cal T}_{h,\delta}^n$ consists only of elements {\sfrei $K$} with non-empty intersection with 
$\Omega_{\delta}^n$, i.e.$\,{\sfrei K}\cap \Omega_\delta^n\neq \emptyset$.
The subset of cells with non-empty intersection with $\Omega$ is denoted by ${\cal T}_h^n$. {\sfrei An illustration is given in Figure~\ref{fig.disc}}.
By $\Omega_{h,\delta}^n$ we denote the domain spanned by all cells $K \in {\cal T}_{h,\delta}^n$ and by $\Omega_h^n$ the domain 
spanned by all cells $K \in {\cal T}_{h}^n$. 

Further, let ${\cal F}_{h,\delta}^n$ denote the set of interior faces $e$
of ${\cal T}_{h,\delta}^n$. { We split the faces into three parts: By ${\cal F}_h^{n,int}$, we denote the faces that belong exclusively to elements $K\in {\cal T}_{h,\delta}^n$ that lie in the interior 
of $\Omega^n$. By ${\cal F}_{h}^{n,cut}$ we denote the set of faces that belong to some element $K\in {\cal T}_{h,\delta}^n$
with $K\cap \partial\Omega^n\neq\emptyset$ and by
${\cal F}_{h,\delta}^{n,ext}$ the set of the remaining faces in ${\cal F}_{h,\delta}^n$, see Figure~\ref{fig.disc}.
Finally, we write 
${\cal F}_{h,\delta}^{n,g}$ for the union of ${\cal F}_{h}^{n,cut}$ and ${\cal F}_{h,\delta}^{n,ext}$, which will be used to define the ghost penalty extensions.
}
%


For spatial discretisation, we use continuous equal-order finite elements of degree $m\geq 1$ for all variables
\begin{align*}
 {\cal V}_h^n &:= \{\v \in C(\Omega_{h,\delta}^n), \, {\sfrei \v|_K \in P_m(K)} \, \forall {\sfrei K} \in {\cal T}_{h,\delta}^n \} \\
 {\cal L}_h^n &:= \{q \in C(\Omega_h^n), \, {\sfrei q|_K \in P_m(K)} \, \forall {\sfrei K} \in {\cal T}_h^n \}, \quad {\cal L}_{h,0}^n := {\cal L}_{h}^n \cap {\cal L}_0(t_n).
\end{align*}
Note that for the pressure space ${\cal L}_h^n$ an extension beyond $\Omega_h^n$ is not required. 

To deal with the \textit{inf-sup} stability, we will 
add a pressure stabilisation term $s_h^n$ to the variational formulation. {\sfrei In order to simplify the presentation, we will restrict ourselves to the Continuous Interior Penalty method (CIP~\cite{BurmanHansbo2006}) in this work, although different pressure stabilisations are possible. We define the CIP pressure stabilisation as}
\begin{align*}
  s_h^n(p;q) := 
 \sum_{e\in {\cal F}_h^{n,int}} h^3 (\jump{\partial_n p}, \jump{\partial_n q})_e
 { + \sum_{e\in {\cal F}_h^{n,cut}} \sum_{k=1}^{m} h^{2k+1} (\jump{\partial_n^k p}, \jump{\partial_n^k q})_e.}
\end{align*}
The higher derivatives in the boundary elements are necessary to control the derivatives $\nabla p_h$ on the extended computational domain $\Omega_h^n\setminus\Omega^n$ 
in the spirit 
of the ghost penalty stabilisation~\cite{Burman2010}.

We summarise the properties of the pressure stabilisation that will be needed in the following: There exists
an operator 
$C_h^n: {\cal V}_h^n \cup L^2(\Omega^n) \to {\cal V}_h^n$, such that the following properties are fulfilled for $n=1,\dots,N$
\begin{eqnarray}
\begin{aligned}\label{stabass}
  s_h^n(q,r) \leq s_h^n(q,q)^{1/2} s_h^n(r,r)^{1/2}&, \quad s_h^n(q,q) \leq ch^2 \|q\|_{H^1(\Omega^n)}^2
  \quad &&\forall q,r \in {\cal V}_h^n \cup H^2(\Omega^n)\\
  h^2\|\nabla p_h - C_h^n \nabla p_h\|_{\Omega_h^n}^2 &\leq s_h^n(p_h^n,p_h^n)\quad &&\forall p_h \in {\cal V}_h^n,\\ 
  { \|C_h^n \nabla p_h\|_{\Omega_h^n}^2} &\leq  { c \left( \|\nabla p_h\|_{\Omega^n}^2 + h^{-2} s_h^n(p_h^n, p_h^n)\right)} \quad &&\forall p_h \in {\cal V}_h^n.
\end{aligned}
\end{eqnarray}
A suitable projector $C_h^n$ for the CIP stabilisation is given by the the Oswald or Cl\'ement interpolation~\cite{BurmanHansbo2006}. {\sfrei For $m\geq 2$, we have additionally the consistency property
\begin{align}\label{stabass2}
 s_h^n(p,p) &=0 \qquad \forall p\in H^{m}(\Omega^n).
 \end{align}}

{\sfrei 
\begin{remark}{(Pressure stabilisation)}
In general any pressure stabilisation operator that leads to a well-posed discrete problem and that fulfils the assumptions \eqref{stabass}-\eqref{stabass2} can be used. The consistency condition $s_h^n(p,p)=0$ can be relaxed to a weak consistency of order $m_s>0$
\begin{eqnarray*}
  s_h^n(p,p) \leq ch^{2m_s} \|p\|_{H^{m_s}(\Omega^n)}^2.
\end{eqnarray*}
which will limit the spatial convergence order in the error estimates. 
One possibility is the Brezzi-Pitk\"aranta stabilisation~\cite{BrezziPitkaeranta} with order $m_s=1$. 
We refer to Burman \& Fern\'andez for a review of further possibilities for pressure stabilisation~\cite{BurmanFernandez2008}.
\end{remark}}

\subsection{Variational formulation}

To cope with the evolving geometry from one time-step to another, we extend the velocity variable $\u_h^n$
to $\Omega_{\delta}^n$, which will be needed in the following time-step, by using so-called \textit{ghost penalty terms} $g_h^n$. We will describe different possibilities to define $g_h^n$ in the next subsection.
For $k=s,\dots,n$ we define the following time-stepping scheme:
\textit{Find $\u_h^n \in {\cal V}_h^n, p_h^n \in {\cal L}_{h,0}^n$} such that
\begin{eqnarray}\label{DiscSystem}
 \begin{aligned}
  (D_t^{(s)} \u_h^n, \v_h)_{\Omega^n} &+{\cal A}_h^n(\u_h^n,p_h^n;\v_h,q_h)
     = (\f,\v_h)_{\Omega^n} \quad \forall \v_h \in {\cal V}_h^n, q_h \in {\cal L}_h^n, 
  \end{aligned}
\end{eqnarray}
where $D_t^{(s)}$ is an approximation of the time derivative $\partial_t$ by the BDF(s) backward difference formula, i.e.
\begin{align*}
 D_t^{(1)} \u_h^n &:= \frac{1}{\Delta t} (\u_h^n - \u_h^{n-1}), \\
 D_t^{(2)} \u_h^n &:=  D_t^{(1)} \u_h^n + \frac{1}{2} (D_t^{(1)} \u_h^n - D_t^{(1)} \u_h^{n-1}) = \frac{1}{2\Delta t} (3\u_h^n - 4\u_h^{n-1} + \u_h^{n-2}).
 \end{align*}

The bilinear form ${\cal A}_h$ is defined by
\begin{align}\label{defAh}
 {\cal A}_h^n(\u_h^n,p_h^n;\v_h,q_h) :=  {\cal A}_S^n(\u_h^n,p_h^n;\v_h,q_h) + a_D^n(\u_h^n, p_h^n; \v_h, q_h)
 + \gamma_g g_h^n(\u_h^n, \v_h) + \gamma_p s_h^n(p_h^n, q_h).
 \end{align}
It includes the Stokes part
 \begin{align}\label{AStokes}
  {\cal A}_S^n(\u,p;\v,q):=   (\nabla \u, \nabla \v)_{\Omega^n} - (p,\div \,\v)_{\Omega^n} 
+ (\div \, \u, q)_{\Omega^n}
\end{align}
and Nitsche terms to weakly impose the Dirichlet boundary conditions
\begin{align}\label{Nitsche}
a_D^n(\u_h^n, p_h^n; \v_h, q_h) := - (\partial_n \u_h^n -p_h^n \n, \v_h)_{\partial\Omega^n} - (\u_h^n, \partial_n \v_h +q_h \n)_{\partial\Omega^n}
 + \frac{\gamma_D}{h} \left(\u_h^n, \v_h\right)_{\partial\Omega^n}.
 \end{align}
{\sfrei In \eqref{Nitsche} the last term can be seen as a penalty term to weakly impose the homogeneous Dirichlet condition for the velocities. The first term 
on the right-hand side makes the variational formulation consistent (in space). Finally, the second term, which vanishes for $u_h^n=0$, yields a formulation, which is
}  
symmetric for the velocities, but skew-symmetric for the pressure.
{\sfrei The skew-symmetry in the pressure variable leads to a stable variational formulation, as the pressure terms cancel out by diagonal testing ($\v_h^n=\u_h^n, q_h^n=p_h^n$), see for example \cite{BurmanFernandezFSI}.} 
  The parameters $\gamma_D, \gamma_p$ and $\gamma_g$ are positive constants.
  
To include the initial condition, we set $\u_h^0 := \pi_h^1 E \u^0$, where $\pi_h^n$ denotes the $L^2(\Omega^n)$-projection onto ${\cal T}_{h}^n$ and $E$ denotes an $L^2$-stable extension operator, which is introduced in the next section.
Summing over $k=1,\dots,n$ in time, the complete system reads for $s=1$
 \begin{eqnarray}\label{completeSpaceTime}
 \begin{aligned}
 \sum_{k=1}^n \Big\{\frac{1}{\Delta t}
    &\left(\u_h^k-\u_h^{k-1}, \v_h^k\right)_{\Omega^k} + {\cal A}_h^k(\u_h^k,p_h^k;\v_h^k,q_h^k) 
   \Big\}
 + \frac{1}{\Delta t}\left(\u_h^0, \v_h^1\right)_{\Omega^1} \\
 &\qquad\qquad=  \frac{1}{\Delta t}\left(E\u^0, \v_h^1\right)_{\Omega^1} + \sum_{k=1}^n (\f,\v_h^k)_{\Omega^k} \quad \forall \v_h^k \in {\cal V}_h^k, q_h^k \in {\cal L}_h^k, \quad k=1,\dots n. 
  \end{aligned}
\end{eqnarray}


{\sfrei In order to simplify the presentation, we will assume that the integrals in \eqref{completeSpaceTime}
are evaluated exactly. While for complex domains $\Omega^k$ an exact evaluation might be difficult in practise, it is in principle possible to push the quadrature error below every prescribed tolerance, for example by adding sufficient quadrature points within a summed quadrature formula. If the integrals are only roughly approximated, for example due to a discrete level-set function $\phi_h^n\in{\cal V}_h^n$ which is only an approximation of a continuous function $\phi^n$, an additional quadrature error needs to be considered.
We refer to the work of Lehrenfeld \& Olshanskii\cite{LehrenfeldOlshanskii}, where these additional error contributions have been analysed in detail for parabolic problems on moving domains. An advantage of the CutFEM methodology compared to standard finite elements is that besides the quadrature error no additional discretisation errors related to the approximation of curved boundaries within the finite element spaces need to be considered.}

{ To initialise the BDF(2) scheme the value $\u_h^1$ needs to be computed with sufficient accuracy before the first full BDF(2) 
step can be made. 
We will comment on the specific requirements and on different possibilities below
in Remark~\ref{rem.initBDF2}. }

\subsection{Extension operators}
\label{sec.ext}

Due to the evolution of the domain, we will frequently need to extend variables defined on smaller domains to larger ones. 
Therefore, we will use $W^{k,p}$-stable extension operators {\sfrei $E^n: \Omega^n \to \Omega_\delta^n$ to extend functions $\u(t_n)\in W^{k,p}(\Omega^n)$}. We make the following assumption for the regularity of the domains $\Omega(t)$ and the domain movement, depending on the polynomial degree $m$ of the finite element spaces.

{\sfrei 
\begin{assumption}\label{ass.T}
We assume that the boundary of the initial domain $\Omega(0)$ is piecewise smooth and Lipschitz, and that the domain motion $\T(t)$ is a $W^{1,\infty}$-diffeomorphism for each $t$ and smooth in the sense that $\T \in L^{\infty}(I, W^{m+1,\infty}(\Omega(0))) \cap W^{1,\infty}(I,W^{m,\infty}(\Omega(0)))$.
\end{assumption}
}

{\sfrei If Assumption~\ref{ass.T} is fulfilled for $m\in\mathbb{N}$, suitable extension operators $E^n:  \Omega^n \to \Omega_\delta^n$ exist with the properties}
\begin{align}
\|E^n \u - \u\|_{\sfrei W^{m+1,p}(\Omega)} &=0,\qquad \|E^n \u\|_{\sfrei W^{m+1,p}(\Omega_\delta^n)} \leq c \|\u\|_{\sfrei W^{m+1,p}(\Omega^n)}, \label{extension}\\
 \|\partial_t (E^n \u)\|_{H^m(\Omega_\delta^n)} &\leq c \left(\|\u\|_{H^{m+1}(\Omega^n)} + \|\partial_t \u\|_{H^m(\Omega^n)}\right) \label{Extdt},\\
 {\sfrei \|\partial_t^2 (E^n \u)\|_{Q_\delta^n}} &{\sfrei \leq c \|\u\|_{H^2(Q^n)}.}\label{Extdt2}
\end{align}
{\sfrei
For a proof of \eqref{extension} we refer to Stein~\cite{Stein70}, Theorem 6 in Chapter VI. The estimate \eqref{Extdt} has been shown in~\cite{LehrenfeldOlshanskii}, Lemma 3.3. The estimate \eqref{Extdt2} follows by the same argumentation.}
In order to alleviate the notation we will in the following skip the operator $E^n$ frequently and denote the extension also by $\u(t_n)$.

\subsubsection{Ghost penalty extension}

The discrete quantities are extended implicitly by adding so-called ghost penalty terms to the variational formulation.
We will consider three variants for the ghost penalty stabilisation,
and refer to~\cite{BurmanHansbo2013,GuerkanMassing2019}
for a more abstract approach on how to design suitable ghost penalties
for a PDE problem at hand.
The first ``classical'' variant~\cite{Burman2010,BurmanHansbo2013, MassingLarsonLoggEtAl2013a} is to penalise jumps of derivatives over element edges 
\begin{align*}
 g_h^{n, \rm jump}(\u, \v) :=  \sum_{e\in {\cal F}_{h,\delta}^{n,g}} \sum_{k=1}^{m}  h^{2k-1} (\jump{\partial_n^k \u}, \jump{\partial_n^k \v})_e.
\end{align*}
This variant has the advantage that it is fully consistent, {\sfrei i.e.\,it vanishes 
for $\u\in H^{m+1}(\Omega_\delta^n)^d$, which implies $\jump{\partial_n^k \u}|_e = 0$ for $k\leq m$.} A disadvantage is that higher derivatives need to be computed for polynomial degrees $m>1$.

To define two further variants, let us introduce the notation {\sfrei $K_{e,1}$ and $K_{e,2}$} for the two cells surrounding a face $e\in {\cal F}_{h,\delta}^{n,g}$, such that 
\begin{align*}
 e=\overline{K}_{e,1} \cap \overline{K}_{e,2}.
\end{align*}
We denote the union of both cells by $w_e:= K_{e,1}\cup K_{e,2}$ and use the $L^2$-projection $\pi_{w_e}: L^2(\Omega_\delta^n) \to P_m(w_e)$, which is
defined by
\begin{align*}
 (\u - \pi_{w,e} \u, \v)_{w_e} = 0 \quad \forall \v \in P_m(w_e).
\end{align*}
We define the ``projection variant'' of the ghost penalty stabilisation~\cite{Burman2010}
\begin{align*}
 g_h^{n, \rm proj}(\u_h, \v_h) :=  \frac{1}{h^2} \sum_{e\in {\cal F}_{h,\delta}^{n,g}}  \left(\u_h-\pi_{w_e} \u_h, \v_h- \pi_{w_e} \v_h\right)_{w_e} 
 =  \frac{1}{h^2} \sum_{e\in {\cal F}_{h,\delta}^{n,g}}  \left(\u_h-\pi_{w_e} \u_h, \v_h\right)_{w_e}.
\end{align*}
The last equality is a direct consequence of the definition of the $L^2$-projection.

The third variant, which has first been used in~\cite{Preuss2018}, uses canonical extensions of polynomials to the neighbouring cell instead of the 
projection $\pi_{w_e} \u$. Let us therefore denote the polynomials that define a function $\u\in {\cal V}_h^n$ in a 
cell $K_{e,i}$ by $\u_{e,i} = \u|_{K_{e,i}}$. We use the same notation for the canonical extension to the neighbouring cell, such that 
$\u_{e,i} \in P_m(w_e)$. Using this notation, we define the so-called ``direct method'' of the ghost penalty stabilisation
\begin{align*}
  g_h^{n,\rm dir}(\u, \v) :=  \frac{1}{h^2} \sum_{e\in {\cal F}_{h,\delta}^{n,g}} \left(\u_{e,1}-\u_{e,2}, \v_{e,1}-\v_{e,2}\right)_{w_e}.
\end{align*}
 For the analysis, we extend the definition of the stabilisation to functions
$\u,\v\in L^2(\Omega_\delta^n)$. Here, we set $\u_{e,i} := \pi_{K_{e,i}} \u|_{K_{e,i}}$ for $i=1,2$, where $\pi_{K_{e,i}}$ denotes the $L^2$-projection to $P_m(K_{e,i})$ and extend this polynomial canonically to the neighbouring cell. {\sfrei In contrast to the classical variant, $g_h^{n, \rm proj}$ and $g_h^{n, \rm dir}$ are only weakly consistent, i.e.\,they fulfil the estimate 
\begin{align*}
 g_h^n(\u, \u) \leq ch^{2m} \|\u\|_{H^{m+1}(\Omega_\delta^n)}, \qquad\text{for } \u \in H^{m+1}(\Omega_\delta^n)^d.
\end{align*}}
We will summarise the properties of these stabilisation terms, that we will need below, in the following lemma. Therefore, we assume 
that from each cell {\sfrei $K\in {\cal T}_{h,\delta}^n$} with $K\cap \Omega^n=\emptyset${, there 
exists a path of cells $K_i, i=1,\dots, m$, such that two subsequent cells share one common face $e=\overline{K}_i\cap \overline{K}_{i+1}$, 
and the final element lies in the interior of $\Omega^n$, i.e.$\,K_m\subset\Omega^n$. In addition the path shall fulfil the following properties. 
Let {\sfrei ${\cal K}$} be the maximum number of cells
needed in the path among all cells $K\in {\cal T}_{h,\delta}^n$. We assume that
\begin{align}\label{assK}
 {\sfrei {\cal K}}\leq (1+ \delta/h) \leq 1 + \frac{c_{\delta} s w_{\rm max} \Delta t}{h},
\end{align}
{\sfrei where the second inequality follows from~\eqref{defDelta}}.
Moreover, we assume that the number of cases in which a specific interior element $K_m\subset\Omega^n$ is used as a final element 
among all the paths is bounded independently of $\Delta t$ and $h$.
These assumptions are reasonable, as one can choose for example the final elements by a projection of distance $\delta$ towards the interior. For a detailed justification, we refer to
Lehrenfeld \& Olshanskii~\cite{LehrenfeldOlshanskii}, Remark 5.2.

\begin{lemma}\label{lem.ghost}
For $\v_h \in {\cal V}_h^n$ and the three variants $g_h^n \in \{ g_h^{n,\rm jump}, g_h^{n,\rm proj}, g_h^{n,\rm dir}\}$ it holds that
\begin{align*}
 \|\v_h\|_{\Omega_\delta^n}^2 \leq c \|\v_h\|_{\Omega^n}^2 + {\cal K} h^2 g_h^n(\v_h, \v_h), \qquad
 \|\nabla \v_h\|_{\Omega_\delta^n}^2 \leq c \|\nabla \v_h\|_{\Omega^n}^2 + {\cal K} g_h^n(\v_h, \v_h)
\end{align*}
Further, it holds for $\u,\v \in H^{m+1}(\Omega_\delta^n)$ for $m\geq 1$ and $\v_h\in {\cal V}_h^n$ that
\begin{align}\label{ghbound}
 g_h^n(\u,\v) \leq g_h^n(\u,\u)^{1/2} g_h^n(\v,\v)^{1/2}, \,\,
 g_h^n(\u, \u) \leq ch^{2m} \|\u\|_{H^{m+1}(\Omega_\delta^n)}^2, \,\, g_h^n(\v_h, \v_h) \leq c \|\nabla \v_h\|_{\Omega_\delta^n}^2.
\end{align}
\end{lemma}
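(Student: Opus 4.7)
The plan is to split the lemma into two groups. The three estimates in \eqref{ghbound} are essentially local (face- or patch-wise) and follow from Cauchy--Schwarz, Bramble--Hilbert type approximation estimates for the $L^2$-projection, and standard trace/inverse inequalities. The first pair of inequalities, on the other hand, requires a global ``path-of-cells'' argument that transports information from the interior of $\Omega^n$ out to the extension strip, which is where the factor ${\cal K}$ comes from.

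\textbf{Proving the bounds in \eqref{ghbound}.} I would first dispatch the Cauchy--Schwarz inequality: for the jump and direct variants it is immediate since $g_h^n$ is a finite sum of $L^2$-inner products on faces or patches, while for the projection variant one first uses the identity $(u - \pi_{w_e}u, v)_{w_e} = (u - \pi_{w_e}u, v - \pi_{w_e}v)_{w_e}$ (which follows from the definition of $\pi_{w_e}$) and then applies Cauchy--Schwarz. For the weak consistency bound the jump variant is actually fully consistent since $u \in H^{m+1}(\Omega_\delta^n)$ forces $\jump{\partial_n^k u} = 0$ for $k \leq m$; for the projection and direct variants it follows from the Bramble--Hilbert estimate $\|u - \pi_{w_e} u\|_{w_e} \leq C h^{m+1}\|u\|_{H^{m+1}(w_e)}$ together with the $h^{-2}$ pre-factor in the definition of $g_h^{n,\rm proj}$ and $g_h^{n,\rm dir}$. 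Finally, the bound $g_h^n(v_h, v_h) \leq c \|\nabla v_h\|^2_{\Omega_\delta^n}$ follows on each face/patch by combining the trace inverse estimate $\|w\|^2_e \leq C h^{-1} \|w\|^2_K$ with the polynomial inverse estimate $\|\partial^k v_h\|_K \leq C h^{1-k} \|\nabla v_h\|_K$; the $h$-powers in $g_h^{n,\rm jump}$ are tailored so that they balance out, and for $g_h^{n,\rm proj}$ and $g_h^{n,\rm dir}$ one uses the $H^1$-stability of the $L^2$-projection on each patch.

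\textbf{Path-of-cells argument.} For each cell $K \in {\cal T}_{h,\delta}^n$ with $K \cap \Omega^n = \emptyset$, Assumption \eqref{assK} yields a path $K = K_0, K_1, \ldots, K_\ell \subset \Omega^n$ with $\ell \leq {\cal K}$, consecutive cells sharing a face $e_j = \overline K_{j-1} \cap \overline K_j$. Denoting by $\tilde v_h^{(j)} \in P_m(\mathbb{R}^d)$ the canonical polynomial extension of $v_h|_{K_j}$, a telescoping decomposition gives
\begin{equation*}
\|v_h\|_K = \|\tilde v_h^{(0)}\|_K \leq \|\tilde v_h^{(\ell)}\|_K + \sum_{j=1}^\ell \|\tilde v_h^{(j)} - \tilde v_h^{(j-1)}\|_K .
\end{equation*}
By equivalence of polynomial norms on shape-regular cells of comparable size we have $\|\tilde v_h^{(\ell)}\|_K \leq C\|v_h\|_{K_\ell}$, and the remaining task is to bound each difference in the sum by the ghost penalty contribution on $e_j$. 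For the direct variant this is essentially by definition, since the restriction of $\tilde v_h^{(j)} - \tilde v_h^{(j-1)}$ to $w_{e_j}$ is precisely $v_{e_j,1} - v_{e_j,2}$, giving the $h^2$-times-$g_h^{n,\rm dir}$-contribution. For the jump variant one expands the polynomial difference along the local normal coordinate to $e_j$, which reads off exactly the $h^{2k-1}$-weighted normal jumps of orders $k=1,\ldots,m$, and for the projection variant one uses that $\pi_{w_{e_j}}v_h$ is a single polynomial on $w_{e_j}$, so that the difference reduces to two projection residuals that appear in $g_h^{n,\rm proj}$. Squaring, applying Cauchy--Schwarz on the telescopic sum to absorb the factor $\ell \leq {\cal K}$, summing over $K$, and using the $O(1)$-multiplicity assumption on the interior endpoints yields the first inequality. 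The gradient estimate follows by the same path argument applied to $\nabla v_h$, the factor $h^2$ being absorbed by an additional inverse estimate on the polynomial differences.

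\textbf{Main obstacle.} The technical heart is the uniform local bound $\|\tilde v_h^{(j)} - \tilde v_h^{(j-1)}\|_{K}^2 \leq C h^2\, g_{h,e_j}^n(v_h,v_h)$ for all three variants. The direct variant is trivial, and the projection variant reduces to it via the shared polynomial $\pi_{w_{e_j}}v_h$. The jump variant, however, requires a non-trivial polynomial identity expressing the difference of two polynomials on the patch in terms of normal derivative jumps of all orders up to $m$, with the correct $h^{2k-1}$-scaling. This identity is the classical ingredient of ghost penalty analysis; I would invoke it by referring to~\cite{Burman2010,BurmanHansbo2013} and to the closely analogous argument in~\cite{LehrenfeldOlshanskii} rather than reproving it.
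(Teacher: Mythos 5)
Your proposal is correct and follows essentially the same route as the paper, which for this lemma simply cites Lehrenfeld \& Olshanskii (and the ghost-penalty literature) for the first four bounds and notes that the last follows similarly: the telescoping path-of-cells argument with the local face-wise norm equivalences you sketch is exactly the standard argument of those references. One minor imprecision: for $g_h^{n,\rm dir}$ the bound by $\|\nabla \v_h\|_{\Omega_\delta^n}^2$ does not come from $H^1$-stability of an $L^2$-projection (no projection appears for discrete $\v_h$) but from the same polynomial-jump identity you invoke for the jump variant, using that continuity of $\v_h$ kills the zeroth-order jump; since you cite that identity anyway, this is not a gap.
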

\begin{proof}
 The first four properties have been proven for the three possibilities introduced above by Lehrenfeld \& Olshanskii~\cite{LehrenfeldOlshanskii}. The last inequality in \eqref{ghbound}  follows similarly.
\end{proof}

\subsection{Properties of the bilinear form}

{\sfrei We start with a continuity result for the combined bilinear form including the Nitsche terms in the functional spaces.}
 
\begin{lemma}{\sfrei (Continuity in the functional spaces)}\label{lem.contCont}
For functions $\u,\v\in {\cal V}(t_n) \cap H^2(\Omega^n)^d$ and $p,q \in {\cal L}(t_n)\cap H^1(\Omega^n)$, we have
\begin{eqnarray*}
\begin{aligned}
 \big|({\cal A}_S^n+&a_D^n)(\u, p; \v, q)\big| + \big| ({\cal A}_S^n+a_D^n)(\v, q; \u, p)\big|  \\
 &\leq c \left(\|\nabla \u\|_{\Omega^n} +h^{-1/2} \|\u\|_{\partial\Omega^n} + h\|\nabla p\|_{\Omega^n} + h^{1/2}\|\partial_n \u\|_{\partial\Omega^n}  \right) \\
 &\qquad\cdot\left( \|\nabla \v\|_{\Omega^n} + h^{-1} \|\v\|_{\Omega^n}  +  \|q\|_{\Omega^n} + h^{-1/2} \|\v\|_{\partial\Omega^n} + h^{1/2} \left(\|\partial_n \v\|_{\partial\Omega^n} +\|q\|_{\partial\Omega^n} \right) \right). 
\end{aligned}
\end{eqnarray*}
\end{lemma}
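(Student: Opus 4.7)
The plan is a term-by-term application of Cauchy--Schwarz, but the asymmetry of the two estimating norms --- in particular, the \emph{absence} of $\|p\|_{\Omega^n}$ on the first factor (only $h\|\nabla p\|_{\Omega^n}$ is available) --- forces a single integration by parts in the pressure-divergence coupling before Cauchy--Schwarz is applied. I will handle the two orderings in parallel.

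For $({\cal A}_S^n + a_D^n)(\u,p;\v,q)$, I expand the bilinear form and integrate by parts on the term $-(p,\div\,\v)_{\Omega^n}$:
\begin{align*}
-(p,\div\,\v)_{\Omega^n} \;=\; (\nabla p,\v)_{\Omega^n} \,-\, (p\n,\v)_{\partial\Omega^n}.
\end{align*}
The boundary contribution is \emph{precisely} the Nitsche consistency term $(p\n,\v)_{\partial\Omega^n}$ contained in $a_D^n$ (with opposite sign), so the two cancel and only $(\nabla p,\v)_{\Omega^n}$ survives in the interior. After this reduction, each remaining term pairs cleanly: $(\nabla\u,\nabla\v)_{\Omega^n}$ and $(\div\,\u,q)_{\Omega^n}$ contribute $\|\nabla\u\|_{\Omega^n}\bigl(\|\nabla\v\|_{\Omega^n}+\|q\|_{\Omega^n}\bigr)$; the surviving bulk pressure term $(\nabla p,\v)_{\Omega^n}$ matches $h\|\nabla p\|_{\Omega^n}$ with $h^{-1}\|\v\|_{\Omega^n}$; and the Nitsche traces $-(\partial_n\u,\v)_{\partial\Omega^n}$, $-(\u,\partial_n\v)_{\partial\Omega^n}$, $-(\u,q\n)_{\partial\Omega^n}$ and $\gamma_D h^{-1}(\u,\v)_{\partial\Omega^n}$ each split an $h^{1/2}$-weighted trace on one side against an $h^{-1/2}$-weighted trace on the other. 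Summing the resulting Cauchy--Schwarz estimates and collecting into the two prescribed norm factors yields the bound on $|({\cal A}_S^n + a_D^n)(\u,p;\v,q)|$.

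The second ordering $(\v,q;\u,p)$ is treated by the same strategy: the bulk coupling $(\div\,\v,p)_{\Omega^n}$ is integrated by parts to produce $-(\v,\nabla p)_{\Omega^n}+(\v\cdot\n,p)_{\partial\Omega^n}$, and the boundary piece cancels the Nitsche term $-(\v,p\n)_{\partial\Omega^n}$, leaving $-(\v,\nabla p)_{\Omega^n}$, which is paired as $h^{-1}\|\v\|_{\Omega^n}\cdot h\|\nabla p\|_{\Omega^n}$. The Nitsche contribution $(q\n,\u)_{\partial\Omega^n}$ coming from this ordering generates the factor $h^{1/2}\|q\|_{\partial\Omega^n}\cdot h^{-1/2}\|\u\|_{\partial\Omega^n}$, which explains why $h^{1/2}\|q\|_{\partial\Omega^n}$ appears in the second norm but not in the first. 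The only genuine step in the whole argument is spotting the integration-by-parts cancellation that converts the unavailable $\|p\|_{\Omega^n}$ into the weaker control $h\|\nabla p\|_{\Omega^n}$; everything else is bookkeeping of $h$-powers.
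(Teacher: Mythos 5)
Your proof is correct and follows essentially the same route as the paper: integrate by parts in the pressure--divergence coupling so that the resulting boundary term cancels against the Nitsche pressure term (the paper carries the two boundary contributions $\mp(p,\v\cdot\n)_{\partial\Omega^n}$ explicitly and lets them cancel upon addition, which is the same cancellation you identify), and then apply Cauchy--Schwarz with the indicated $h$-weightings term by term, treating the reversed ordering by exchanging the roles of trial and test functions. No gaps; the bookkeeping of the $h^{\pm 1/2}$ and $h^{\pm 1}$ factors matches the stated norms.
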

\begin{proof}
We apply integration by parts in \eqref{AStokes}
 \begin{align*}
   {\cal A}_S^n(\u, p; \v, q) &= (\nabla \u, \nabla \v)_{\Omega^n} + (\nabla p, \v)_{\Omega^n} - (p, \v\cdot \n)_{\partial\Omega^n} + ({\rm div}\, \u, q)_{\Omega^n}\\
 &\leq c \left(\|\nabla \u\|_{\Omega^n} + h\|\nabla p\|_{\Omega^n}\right) \left(\|\nabla \v\|_{\Omega^n} + h^{-1}\|\v\|_{\Omega^n} + \|q\|_{\Omega^n}\right) - (p, \v\cdot \n)_{\partial\Omega^n}.
 \end{align*}
  For the Nitsche terms standard estimates result in
 \begin{eqnarray}
 \begin{aligned}
  a_D^n(\u, p; \v, q)
 &\leq c \left(h^{-1/2} \|\u\|_{\partial\Omega^n} + h^{1/2}\|\partial_n \u\|_{\partial\Omega^n} \right) 
 \left(h^{-1/2} \|\v\|_{\partial\Omega^n} + h^{1/2}\left(\|\partial_n \v\|_{\partial\Omega^n} + \|q\|_{\partial\Omega^n}\right) \right)\\
 &\qquad+ (p, \v\cdot \n)_{\partial\Omega^n}.
\end{aligned}
\end{eqnarray}
The estimate for $({\cal A}_S^n+a_D^n)(\v, q; \u, p)$ can be shown in exactly the same way by inverting the role of test and trial functions.

\end{proof}

Next, we show continuity and coercivity of the {\sfrei discrete bilinear form}. To this end, we introduce the triple norm
\begin{align*}
 |||\u_h|||_{h,n} := \left(  \|\nabla \u_h^n\|_{\Omega^n}^2 + \gamma_g g_h^n(\u_h^n, \u_h^n) + \frac{\gamma_D}{h} \|\u_h^n\|_{\partial\Omega^n}^2  \right)^{1/2}
\end{align*}
\begin{lemma}{\sfrei (Coercivity \& Continuity in the discrete setting)}\label{lem.CoercCont}
For the bilinear form ${\cal A}_h$ defined in \eqref{defAh} and $\u_h\in {\cal V}_h^n$ and $p_h\in {\cal L}_h^n$, it holds for 
$\gamma_D$ sufficiently large
\begin{align}\label{coerc}
{\cal A}_h^n(\u_h, p_h; \u_h, p_h) \geq \frac{1}{2}\left( |||\u_h|||_{h,n}^2 +  \gamma_p s_h^n(p_h, p_h)\right).
 \end{align}
 Moreover, we have for $\u_h,\v_h\in {\cal V}_h^n$ and $p_h, q_h\in {\cal L}_h^n$
 \begin{align}\label{contDisc}
  {\cal A}_h^n(\u_h, p_h; \v_h, q_h) \leq \left(|||\u_h|||_{h,n} + \|p_h\|_{\Omega^n} + s_h^n(p_h, p_h)^{1/2}\right) \left( |||\v_h|||_{h,n} + \|q_h\|_{\Omega^n} + s_h^n(q_h, q_h)^{1/2}\right).
 \end{align}
 \end{lemma}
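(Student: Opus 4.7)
I would diagonally test with $\v_h = \u_h$, $q_h = p_h$. In the Stokes part ${\cal A}_S^n$ the pressure-divergence pair cancels, producing $\|\nabla \u_h\|_{\Omega^n}^2$. In $a_D^n$, the two contributions $(p_h \n, \u_h)_{\partial\Omega^n}$ and $-(\u_h, p_h \n)_{\partial\Omega^n}$ also cancel by the skew-symmetric sign convention, so the diagonal evaluation reads
\[
{\cal A}_h^n(\u_h, p_h; \u_h, p_h) = \|\nabla \u_h\|_{\Omega^n}^2 - 2(\partial_n \u_h, \u_h)_{\partial\Omega^n} + \frac{\gamma_D}{h}\|\u_h\|_{\partial\Omega^n}^2 + \gamma_g g_h^n(\u_h,\u_h) + \gamma_p s_h^n(p_h,p_h).
\]
Young's inequality bounds the cross term by $\epsilon h\|\partial_n \u_h\|_{\partial\Omega^n}^2 + \epsilon^{-1} h^{-1}\|\u_h\|_{\partial\Omega^n}^2$. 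The crucial step is a cut-cell trace inverse inequality, chained with Lemma~\ref{lem.ghost}, to obtain $h\|\partial_n \u_h\|_{\partial\Omega^n}^2 \leq c(\|\nabla \u_h\|_{\Omega^n}^2 + g_h^n(\u_h,\u_h))$. Choosing $\epsilon$ small and $\gamma_D$ sufficiently large absorbs the bad term into the definite part, giving \eqref{coerc}.

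\textbf{Plan for continuity.} I apply Cauchy--Schwarz to each of the four contributions to ${\cal A}_h^n$. The bulk Stokes part is bounded directly by $\|\nabla \u_h\|_{\Omega^n}\|\nabla \v_h\|_{\Omega^n} + \|p_h\|_{\Omega^n}\|\nabla \v_h\|_{\Omega^n} + \|\nabla \u_h\|_{\Omega^n}\|q_h\|_{\Omega^n}$. The ghost-penalty and pressure-stabilisation contributions are handled by the Cauchy--Schwarz inequalities in Lemma~\ref{lem.ghost} and \eqref{stabass}. For the Nitsche boundary terms the boundary traces of $\partial_n \u_h$, $\partial_n \v_h$, $p_h$, $q_h$ are bounded by the same cut-cell trace inverse inequality combined with Lemma~\ref{lem.ghost} and \eqref{stabass}, yielding $h^{1/2}\|\partial_n \v_h\|_{\partial\Omega^n} \leq c(\|\nabla \v_h\|_{\Omega^n}^2 + g_h^n(\v_h,\v_h))^{1/2}$ and an analogous pressure bound $h^{1/2}\|q_h\|_{\partial\Omega^n} \leq c(\|q_h\|_{\Omega^n}^2 + s_h^n(q_h,q_h))^{1/2}$. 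The factor $h^{-1/2}\|\v_h\|_{\partial\Omega^n}$ appearing opposite these terms is absorbed by the penalty part of $|||\v_h|||_{h,n}$. Collecting everything and using $a_1+\cdots+a_k \leq \sqrt{k}\,(a_1^2+\cdots+a_k^2)^{1/2}$ yields \eqref{contDisc}.

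\textbf{Main obstacle.} The delicate technical point is the cut-cell trace inverse inequality: since the intersection of a background element $K$ with $\partial\Omega^n$ can be arbitrarily small or badly shaped, the classical constant depending on $K$ is not uniformly bounded with respect to the cut configuration. The remedy is precisely to chain the trace inverse inequality with the one-layer ghost penalty bound from Lemma~\ref{lem.ghost}, which transfers control from the possibly sliver-like cut cell onto a well-sized interior cell; the same chaining is required on the pressure side through \eqref{stabass}. This mechanism, together with the skew-symmetric sign in the pressure part of $a_D^n$ that enables the cancellation in the diagonal test, is what makes both coercivity (with sufficiently large $\gamma_D$) and continuity robust with respect to the cut geometry.
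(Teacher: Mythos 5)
Your proposal follows essentially the same route as the paper: diagonal testing with cancellation of the pressure terms, a Young inequality on the Nitsche cross term $-2(\partial_n \u_h,\u_h)_{\partial\Omega^n}$ handled by a discrete trace/inverse inequality on the cut elements chained with Lemma~\ref{lem.ghost} to return to $\|\nabla\u_h\|_{\Omega^n}$ plus $g_h^n(\u_h,\u_h)$, absorbed for $\gamma_D$ sufficiently large; and for continuity, Cauchy--Schwarz on each contribution with the same trace-inverse/ghost-penalty and \eqref{stabass} mechanism for the boundary terms. This matches the paper's argument, so the proposal is correct.
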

\begin{proof}
{\sfrei To show coercivity \eqref{coerc}, we note that 
\begin{align*}
{\cal A}_h^n(\u_h, p_h; \u_h, p_h) = |||u_h|||_{h,n}^2 + \gamma_p s_h(p_h, p_h) - 2(\u_h^n,\partial_n \u_h^n)_{\partial\Omega^n}.
\end{align*}
To estimate the term $-2(\u_h^n,\partial_n \u_h^n)_{\partial\Omega^n}$, we apply a Cauchy-Schwarz and Young's inequality for $\epsilon>0$, followed by an inverse inequality on $\Omega_h^n$
\begin{align*}
-2(\u_h,\partial_n \u_h)_{\partial\Omega^n} 
\geq -\frac{1}{\epsilon h} \|\u_h\|_{\partial\Omega^n}^2 - \epsilon h \| \nabla \u_h\|_{\partial\Omega^n}^2
 \geq -\frac{1}{\epsilon h} \|\u_h\|_{\partial\Omega^n}^2 - c\epsilon \| \nabla \u_h\|_{\Omega_h^n}^2.
\end{align*}
Using Lemma~\ref{lem.ghost}, we obtain
\begin{align*}
2(\u_h,\partial_n \u_h)_{\partial\Omega^n} \geq -\frac{1}{\epsilon h} \|\u_h\|_{\partial\Omega^n}^2 - c\epsilon \left(\| \nabla \u_h\|_{\Omega^n}^2 + {\cal K} g_h^n(\u_h, \u_h)\right) \\
\geq -\frac{\gamma_D}{2 h} \|\u_h\|_{\partial\Omega^n}^2 - \frac{1}{2} \left(\| \nabla \u_h\|_{\Omega^n}^2 + \gamma_g g_h^n(\u_h, \u_h)\right)
\end{align*}
for $\gamma_D$ sufficiently large.}
Concerning continuity, we estimate
\begin{eqnarray}
\begin{aligned}\label{ASDisc}
 {\cal A}_S^n(\u_h, p_h; \v_h, q_h) 
 &\leq c \left(\|\nabla \u_h\|_{\Omega^n} + \|p_h\|_{\Omega^n}\right) \left(\|\nabla \v_h\|_{\Omega^n} + \|q_h\|_{\Omega^n}\right)
 \end{aligned}
 \end{eqnarray}
 For the Nitsche terms, we have using inverse inequalities and Lemma~\ref{lem.ghost}
 \begin{eqnarray}
 \begin{aligned}\label{Nitsche4p}
  a_D^n(\u_h, &p_h; \v_h, q_h) = \frac{\gamma_D}{h} (\u_h, \v_h)_{\partial\Omega^n} - ( \partial_n \u_h - p_h  n, \v_h)_{\partial\Omega^n} 
 -(\u_h, \partial_n \v_h + q_h \n)_{\partial\Omega^n}\\
 &\leq c \left(\frac{\gamma_D^{1/2}}{h^{1/2}} \|\u_h\|_{\partial\Omega^n} + \|\nabla \u_h\|_{\Omega_h^n}^2 +  \|p_h\|_{\Omega^n} \right) \left(\frac{\gamma_D^{1/2}}{h^{1/2}} \|\v_h\|_{\partial\Omega^n} + \|\nabla \v_h\|_{\Omega_h^n}^2 + \|q_h\|_{\Omega^n} \right)\\
  &\leq c \left(|||\u_h|||_{h,n} +  \|p_h\|_{\Omega^n} \right) \left(|||\v_h|||_{h,n} + \|q_h\|_{\Omega^n} \right).
\end{aligned}
\end{eqnarray}
 Finally, Lemma~\ref{lem.ghost} and the assumption \eqref{stabass} for the pressure stabilisation yield
 \begin{align*}
 g_h^n(\u_h, \v_h) &\leq g_h^n(\u_h, \u_h)^{1/2} g_h^n(\v_h, \v_h)^{1/2},\qquad
 s_h^n(p_h, q_h) \leq s_h^n(p_h, p_h)^{1/2}  s_h^n(q_h, q_h)^{1/2}.
\end{align*}
\end{proof}

Moreover, we have the following modified \textit{inf-sup} condition for the discrete spaces.
\begin{lemma}\label{lem.modinfsup}
 Let $p_h^n\in {\cal L}_h^n$. There exists a constant $\beta>0$, such that
 \begin{align}\label{infsup1}
  \beta \|p_h^n\|_{\Omega^n} \leq \sup_{\v_h^n \in {\cal V}_h^n} \frac{({\rm div}\, \v_h^n, p_h^n)_{\Omega^n} - (\v_h^n \cdot n, p_h^n)_{\partial\Omega^n}}{|||\v_h^n|||_{h,n}} + h\|\nabla p_h^n\|_{\Omega^n}.
 \end{align}
\end{lemma}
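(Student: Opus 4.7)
The plan is to construct a suitable test function $\v_h^n\in{\cal V}_h^n$ by starting from the continuous Stokes inf-sup condition on $\Omega^n$ and then Scott-Zhang interpolating into the discrete (cut) space. Since $\Omega^n$ is Lipschitz, the classical surjectivity of the divergence yields $\v\in H^1_0(\Omega^n)^d$ with $({\rm div}\,\v,p_h^n)_{\Omega^n}=\|p_h^n\|_{\Omega^n}^2$ and $\|\nabla \v\|_{\Omega^n}\le c\|p_h^n\|_{\Omega^n}$. Because $\v$ has zero trace on $\partial\Omega^n$, its extension by zero $\tilde{\v}$ lies in $H^1(\Omega_{h,\delta}^n)^d$, and we may apply a quasi-interpolation $\pi_h\colon H^1(\Omega_{h,\delta}^n)^d \to {\cal V}_h^n$ (Scott-Zhang on ${\cal T}_{h,\delta}^n$) to define $\v_h^n := \pi_h \tilde{\v}$.

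The next step is to convert the numerator to something that can be controlled by $\|p_h^n\|_{\Omega^n}^2$. Integrating by parts element-wise and using continuity of $p_h^n$ across element faces,
\begin{equation*}
({\rm div}\,\v_h^n,p_h^n)_{\Omega^n}-(\v_h^n\cdot \n,p_h^n)_{\partial\Omega^n} = -(\v_h^n,\nabla p_h^n)_{\Omega^n}.
\end{equation*}
Repeating this identity for the continuous $\v$ (where the boundary contribution vanishes) gives $-(\v,\nabla p_h^n)_{\Omega^n}=\|p_h^n\|_{\Omega^n}^2$. Writing $\v_h^n = \v+(\v_h^n-\v)$ on $\Omega^n$, a Cauchy-Schwarz inequality and the standard $L^2$-approximation estimate $\|\v-\v_h^n\|_{\Omega^n}\le ch\|\nabla \tilde{\v}\|_{\Omega_{h,\delta}^n}\le ch\|\nabla \v\|_{\Omega^n}\le ch\|p_h^n\|_{\Omega^n}$ yield
\begin{equation*}
({\rm div}\,\v_h^n,p_h^n)_{\Omega^n}-(\v_h^n\cdot \n,p_h^n)_{\partial\Omega^n} \ge \|p_h^n\|_{\Omega^n}^2 - ch\|p_h^n\|_{\Omega^n}\|\nabla p_h^n\|_{\Omega^n}.
\end{equation*}

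The main technical effort lies in bounding $|||\v_h^n|||_{h,n}\le c\|p_h^n\|_{\Omega^n}$. The gradient part is immediate from the $H^1$-stability of the Scott-Zhang interpolant: $\|\nabla\v_h^n\|_{\Omega^n}\le c\|\nabla\tilde{\v}\|_{\Omega_{h,\delta}^n}=c\|\nabla\v\|_{\Omega^n}\le c\|p_h^n\|_{\Omega^n}$. For the boundary term, since $\tilde\v=0$ on $\partial\Omega^n$, we may write $\v_h^n = \v_h^n-\tilde\v$ on $\partial\Omega^n$ and apply the trace inequality on cut elements,
\begin{equation*}
\|\v_h^n-\tilde\v\|_{\partial\Omega^n\cap K}^2 \le c\bigl(h^{-1}\|\v_h^n-\tilde\v\|_K^2 + h\|\nabla(\v_h^n-\tilde\v)\|_K^2\bigr),
\end{equation*}
together with the usual approximation/stability estimates of $\pi_h$ on the patch around each cut cell, to obtain $h^{-1}\|\v_h^n\|_{\partial\Omega^n}^2\le c\|\nabla\v\|_{\Omega^n}^2 \le c\|p_h^n\|_{\Omega^n}^2$. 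The ghost penalty contribution is controlled via Lemma~\ref{lem.ghost}, which gives $g_h^n(\v_h^n,\v_h^n)\le c\|\nabla \v_h^n\|_{\Omega_\delta^n}^2$; the right-hand side is then bounded by $c\|\nabla\tilde\v\|_{\Omega_\delta^n}^2\le c\|p_h^n\|_{\Omega^n}^2$ using the $H^1$-stability of Scott-Zhang on ${\cal T}_{h,\delta}^n$.

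Combining the three bounds gives $|||\v_h^n|||_{h,n}\le c\|p_h^n\|_{\Omega^n}$, hence
\begin{equation*}
\sup_{\v_h^n\in{\cal V}_h^n}\frac{({\rm div}\,\v_h^n,p_h^n)_{\Omega^n}-(\v_h^n\cdot\n,p_h^n)_{\partial\Omega^n}}{|||\v_h^n|||_{h,n}} \ge \frac{1}{c}\|p_h^n\|_{\Omega^n} - \frac{1}{c}h\|\nabla p_h^n\|_{\Omega^n},
\end{equation*}
which is the claimed inequality with $\beta=1/c$. The principal obstacle is the cut-boundary estimate of $h^{-1/2}\|\v_h^n\|_{\partial\Omega^n}$: one has to combine the trace inequality on arbitrarily cut elements with the Scott-Zhang approximation property applied to the zero extension $\tilde\v$, which is only $H^1$ (not smoother) across $\partial\Omega^n$; all the remaining estimates are standard.
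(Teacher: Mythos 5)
Your proposal is correct and follows essentially the same route as the paper's proof (which follows Burman \& Hansbo): solve the continuous divergence problem on $\Omega^n$, take a quasi-interpolant in ${\cal V}_h^n$ as the test function, bound its triple norm by $c\|p_h^n\|_{\Omega^n}$ using the zero trace of the continuous solution together with a cut-element trace inequality and Lemma~\ref{lem.ghost}, and let the term $h\|\nabla p_h^n\|_{\Omega^n}$ absorb the $L^2$-interpolation error paired with $\nabla p_h^n$. The only differences are cosmetic: you use the zero extension with a Scott--Zhang interpolant and a full integration by parts of the numerator, while the paper uses a Cl\'ement-type interpolant and splits $-(p_h^n,\div\,\v_p^n)$ into interpolation and discrete parts; the resulting estimates coincide.
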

\begin{proof}
We follow Burman \& Hansbo~\cite{BurmanHansbo2006} and define $\v_p^n\in H^1_0(\Omega^n)^d$ as solution to
\begin{align}\label{vpn}
 {\rm div}\, \v_p^n =-\frac{p_h^n}{\|p_h^n\|_{\Omega^n}}\quad \text{on }\Omega^n.
\end{align}
Such a solution exists, see Temam~\cite{Temam2000}, and fulfils $\|\v_p^n\|_{H^1(\Omega^n)} \leq c$.
{\sfrei We introduce an $L^2$-stable interpolation $i_h^n \v_p^n$ (for example the Cl\'ement interpolation) to get}
\begin{align}\label{phn1}
 \|p_h^n\|_{\Omega^n} = -(p_h^n, {\rm div}\, \v_p^n)_{\Omega^n} = -(p_h^n, {\rm div}\, (\v_p^n-i_h^n \v_p^n))_{\Omega^n} - (p_h^n, {\rm div}\, (i_h^n \v_p^n))_{\Omega^n}.
\end{align}
We apply integration by parts in the first term 
and use that $\v_p^n$ vanishes in $\partial\Omega^n$
\begin{eqnarray}
\begin{aligned}\label{phn2}
 -(p_h^n, {\rm div} (\v_p^n-i_h^n \v_p^n))_{\Omega^n} &= (\nabla p_h^n, \v_p^n-i_h^n \v_p^n)_{\Omega^n} -  (p_h^n \n, \v_p^n-i_h^n \v_p^n)_{\partial\Omega^n}\\
 &\leq ch \|\nabla p_h^n\|_{\Omega^n}  + (p_h^n \n, i_h^n \v_p^n)_{\partial\Omega^n}
\end{aligned}
\end{eqnarray}
The statement follows by noting that
\begin{align*}
 |||i_h^n \v_p^n|||_{h,n}^2 &=  \|i_h^n \nabla \v_p^n\|_{\Omega^n}^2 + \frac{\gamma_D}{h} \|i_h^n \v_p^n\|_{\partial\Omega^n}^2 + g_h^n(i_h^n \v_p^n, i_h^n \v_p^n) \\
 &\leq c\left(  \|\nabla \v_p^n\|_{\Omega^n}^2 + \frac{\gamma_D}{h} \|i_h^n \v_p^n-\v_p^n\|_{\partial\Omega^n}^2 
 + \|\nabla i_h^n \v_p^n\|_{\Omega_{h,\delta}^n}^2\right)
 \leq c \|\nabla \v_p^n\|_{\Omega^n}^2 \leq c.
\end{align*}
\end{proof}


The well-posedness of the discrete system \eqref{DiscSystem} for sufficiently large $\gamma_p, \gamma_g, \gamma_D$ and given $\u_h^{n-1}$ (and $\u_h^{n-2}$ for BDF(2)) follows by standard arguments, see for example~\cite{BurmanHansbo2006}.

\section{Stability analysis}
\label{sec.stab}

In order to simplify the analysis, we restrict ourselves in this and the next section to the case $s=1$ first, i.e., the backward Euler variant of the time discretisation and comment on the case $s=2$ in remarks.
In order to abbreviate the notation, we write for the space-time Bochner norms
\begin{align*}
 \|\u\|_{\infty,m,I_k} := \|\u\|_{L^{\infty}(I_k,H^m(\Omega(t)))}, \qquad \|\u\|_{\infty,m} := \|\u\|_{\infty,m,I},
\end{align*}
where $m\in \mathbb{Z}$ and $H^0(\Omega(t)) := L^2(\Omega(t))$.

We start with a preliminary result concerning the extension of discrete functions to $\Omega_\delta^n$.

\begin{lemma}
 \label{lem.domainExt}
 Let $\v \in {\cal V}, \,\delta \leq c_\delta s w_{\rm max} \Delta t$ and $S_\delta^n := \Omega_\delta^n \setminus \Omega^n$. It holds for arbitrary $\epsilon>0$
 \begin{align}\label{deltaeps}
\|\v\|_{S_{\delta}^n}^2 \leq c \delta \left( (\epsilon+\epsilon^{-1}) \|\v\|_{\Omega_\delta^n}^2 + \epsilon \|\nabla \v\|_{\Omega_\delta^n}^2 \right). 
\end{align} 
For $\v_h \in {\cal V}_h^n$, we have further for $h$ sufficiently small
\begin{eqnarray}
 \begin{aligned}\label{secondStatement}
 \|\v_h\|_{\Omega_{\delta}^n}^2 &\leq \left(1 + {\sfrei c_1(w_{\max})} \Delta t \right)  \|\v_h\|_{\Omega^{n}}^2 + \frac{ \Delta t}{2} \|\nabla \v_h\|_{\Omega^{n}}^2
 + 
  {\sfrei c_2(w_{\max})} \Delta t {\cal K} g_h^n(\v_h, \v_h) 
 \end{aligned}
 \end{eqnarray}
 with constants ${\sfrei c_1(w_{\max})}:= 1/2 +c s^2 w_{\rm max}^2$, ${\sfrei c_2(w_{\max})} := c w_{\rm max}^2 h^2 +  1$ and $c>0$.
\end{lemma}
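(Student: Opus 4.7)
The first estimate \eqref{deltaeps} is a standard thin-strip Poincar\'e-type inequality combined with a trace inequality. I would parametrise the strip $S_\delta^n$ via a tubular neighbourhood map $\Phi\colon \partial\Omega^n \times (0,\delta) \to S_\delta^n$, $\Phi(x,r) = x + r\,\n(x)$, which is a $C^1$-diffeomorphism with uniformly bounded Jacobian for $\delta$ below a curvature-dependent threshold (Assumption~\ref{ass.T} provides enough regularity on $\partial\Omega^n$). Writing $\v(\Phi(x,r)) = \v(x) + \int_0^r \partial_s(\v \circ \Phi)(x,s)\,ds$, squaring and using Cauchy--Schwarz yields
\[
\|\v\|_{S_\delta^n}^2 \leq 2\delta\,\|\v\|_{\partial\Omega^n}^2 + c\delta^2\,\|\nabla \v\|_{S_\delta^n}^2.
\]
Applying the two-sided multiplicative trace inequality $\|\v\|_{\partial\Omega^n}^2 \leq c\,\|\v\|_{\Omega_\delta^n}\,\|\v\|_{H^1(\Omega_\delta^n)}$ and Young's inequality with parameter $\epsilon$ produces the stated $(\epsilon + \epsilon^{-1})\|\v\|_{\Omega_\delta^n}^2 + \epsilon\|\nabla \v\|_{\Omega_\delta^n}^2$ structure; the remaining $c\delta^2\|\nabla \v\|_{S_\delta^n}^2$ is absorbed using the boundedness of $\delta$ in the regime $\delta \leq c_\delta s w_{\max}\Delta t$, i.e.\ $\delta^2 \leq \delta \cdot \delta_{\max}$ absorbed into the $c\delta\epsilon\|\nabla\v\|^2$ contribution.

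For the second estimate, I would split $\|\v_h\|_{\Omega_\delta^n}^2 = \|\v_h\|_{\Omega^n}^2 + \|\v_h\|_{S_\delta^n}^2$, apply \eqref{deltaeps} to bound the strip term, and then use Lemma~\ref{lem.ghost} to replace the two norms over $\Omega_\delta^n$ on the right-hand side by their counterparts over $\Omega^n$ at the cost of a ghost-penalty contribution (with the characteristic $h^2$ discrepancy between the $L^2$- and $H^1$-bounds). Substituting $\delta \leq c_\delta s w_{\max}\Delta t$ and choosing $\epsilon$ so that $c\delta\epsilon = \Delta t/2$, i.e.\ $\epsilon \sim (s w_{\max})^{-1}$, produces the desired $\tfrac{\Delta t}{2}\|\nabla\v_h\|_{\Omega^n}^2$ term, while $\epsilon^{-1} \sim s w_{\max}$ and $c\delta\epsilon^{-1} \sim c s^2 w_{\max}^2 \Delta t$ yield the coefficient $c_1(w_{\max}) = \tfrac12 + c s^2 w_{\max}^2$ in front of $\|\v_h\|_{\Omega^n}^2$. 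The ghost-penalty factor $c_2(w_{\max}) = c w_{\max}^2 h^2 + 1$ arises from combining the $h^2$-loaded $L^2$-ghost bound (contributing $w_{\max}^2 h^2$ through $\delta\epsilon^{-1} h^2$) with the non-loaded $H^1$-ghost bound (contributing $+1$ through $\delta\epsilon \sim \Delta t$).

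The main obstacle is a careful book-keeping of constants so that all factors of $s$, $w_{\max}$ and $h$ land in exactly the stated form; in particular, the single choice of $\epsilon$ has to simultaneously produce the coefficient $\tfrac{1}{2}$ in front of $\|\nabla\v_h\|_{\Omega^n}^2$ and yield the $s^2 w_{\max}^2$-scaling inside $c_1$, while Lemma~\ref{lem.ghost} has to be applied with different $h$-scalings on the $L^2$- and the $H^1$-pieces to produce the two summands of $c_2$. The remaining ingredients (the tubular parametrisation, the two-sided trace inequality, and Lemma~\ref{lem.ghost} itself) are routine, so no deeper technical hurdle arises.
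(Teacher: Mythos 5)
Your handling of the discrete estimate \eqref{secondStatement} --- splitting $\|\v_h\|_{\Omega_\delta^n}^2=\|\v_h\|_{\Omega^n}^2+\|\v_h\|_{S_\delta^n}^2$, passing the $\Omega_\delta^n$-norms back to $\Omega^n$ plus ghost penalty via Lemma~\ref{lem.ghost}, and choosing $\epsilon\sim (s\,w_{\rm max})^{-1}$ so that $c\delta\epsilon\le \Delta t/2$ --- is exactly the paper's argument, and your accounting for $c_1$ and $c_2$ is consistent with it. The gap is in your derivation of \eqref{deltaeps}. Expanding from the fixed boundary, $\v(\Phi(x,r))=\v(x)+\int_0^r\partial_s(\v\circ\Phi)\,ds$, leaves the remainder $c\delta^2\|\nabla\v\|_{S_\delta^n}^2$, and this term cannot be absorbed into $c\delta\,\epsilon\|\nabla\v\|_{\Omega_\delta^n}^2$ for \emph{arbitrary} $\epsilon>0$: the absorption needs $\delta\lesssim\epsilon$, while the constant $c$ in \eqref{deltaeps} must be independent of both $\epsilon$ and $\delta$ (the lemma is invoked later with various, possibly small, values of $\epsilon$, e.g.\ in the dual stability estimate of Lemma~\ref{lem.StabDual}). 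What your argument actually yields is the weaker bound with $\epsilon$ replaced by $\epsilon+c\delta$ in front of the gradient term; fed into the second part, the gradient coefficient in \eqref{secondStatement} becomes $\Delta t/2+c\,s^2w_{\rm max}^2\Delta t^2$ rather than exactly $\Delta t/2$, so the stated lemma would only follow under an additional smallness condition on $\Delta t$, which is not among its hypotheses ("$h$ sufficiently small").

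The paper avoids the remainder entirely by a layer-wise (coarea) argument rather than an expansion from $\partial\Omega^n$: it applies the multiplicative trace inequality \eqref{multTrace} on every intermediate surface $\Gamma_r^n=\partial\Omega_r^n$ with $\Omega_r^n\subset\Omega_\delta^n$, obtaining $\|\v\|_{\Gamma_r^n}^2\le c_0\left((\epsilon+\epsilon^{-1})\|\v\|_{\Omega_r^n}^2+\epsilon\|\nabla\v\|_{\Omega_r^n}^2\right)$ for each $0<r<\delta$, and then integrates over $r\in(0,\delta)$; this gives \eqref{deltaeps} with the claimed $\epsilon$-structure and no $\delta^2\|\nabla\v\|^2$ term, after which your second step goes through verbatim and delivers the exact coefficient $\Delta t/2$. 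Replacing your single trace-at-$\partial\Omega^n$ expansion by this foliation argument repairs the proof; the remaining ingredients of your proposal coincide with the paper's.
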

\begin{proof}
These results follow {\sfrei similarly to Lemmas 3.4 and 5.3} in~\cite{LehrenfeldOlshanskii}. 
Nevertheless, we give here a sketch of the proof due to the importance of the Lemma in 
the following estimates. We define 
\begin{align*}
 \Omega_r^n &:= \Omega^n \cup \{ x\in S_\delta^n, {\rm dist}(x,\partial\Omega^n) < r\}, \quad
  \Gamma_r^n := \{ x\in S_\delta^n, {\rm dist}(x,\partial\Omega^n) = r\} = \partial\Omega_r^n.
\end{align*}
We apply a {\sfrei multiplicative trace inequality
and Young's inequality} for arbitrary $\epsilon>0$
\begin{align}\label{multTrace}
 \| v\|_{\Gamma_r^n}^2 \leq c \|\v\|_{\Omega_r^n} \|\v\|_{H^1(\Omega_r^n)} 
 {\sfrei\leq c_0 \left( \epsilon^{-1}  \|\v\|_{\Omega_r^n}^2 + \epsilon \|\v\|_{H^1(\Omega_r^n)}^2\right)}
 = c_0 \left( (\epsilon+\epsilon^{-1}) \|\v\|_{\Omega_r^n}^2 + \epsilon \|\nabla \v\|_{\Omega_r^n}^2 \right)
\end{align}
with a constant $c_0$ depending on the curvature of $\partial\Omega^n$.
Integration over $r\in (0,\delta)$ yields \eqref{deltaeps}.
For a discrete function $\v_h\in {\cal V}_h^n$ we use Lemma~\ref{lem.ghost}
to obtain
\begin{align*}
 \|\v_h\|_{S_{\delta}^n}^2 &\leq c_0 \delta (\epsilon+\epsilon^{-1}) \|\v_h\|_{\Omega_\delta^n}^2 + c_0\delta \epsilon \|\nabla \v_h\|_{\Omega_\delta^n}^2\\
 &\leq c_0 \delta (\epsilon+\epsilon^{-1}) \|\v_h\|_{\Omega^n}^2 + c_0\delta \epsilon \|\nabla \v_h\|_{\Omega^n}^2
 + c_0\delta {\cal K} \left((\epsilon+\epsilon^{-1}) h^2 + \epsilon \right) g_h^n(\v_h, \v_h).
\end{align*}
Using {\sfrei \eqref{defDelta}} and choosing $\epsilon=\frac{1}{2c_0 c_\delta s w_{\rm max}}$, we have $c_0 \delta \epsilon \leq \frac{\Delta t}{2}$ and
\begin{align}\label{next2final}
 \|\v_h\|_{S_{\delta}^n}^2 &\leq {\sfrei c_1(w_{\max})}\Delta t \|\v_h\|_{\Omega^n}^2 + \frac{\Delta t}{2} \|\nabla \v_h\|_{\Omega^n}^2
 +{\sfrei c_2(w_{\max})} \Delta t {\cal K} g_h^n(\v_h, \v_h)
\end{align}
for $h<1$ with the constants {\sfrei $c_1(w_{\max}), c_2(w_{\max})$} given in the statement.
The inequality \eqref{secondStatement} follows by combining \eqref{next2final} with the equality 
\begin{align}\label{firstLine}
 \|\v_h\|_{\Omega_{\delta}^n}^2 = \|\v_h\|_{\Omega^n}^2 + \|\v_h\|_{S_{\delta}^n}^2.
\end{align}
\end{proof}

Now we are ready to show a stability result for the discrete formulation \eqref{DiscSystem}. 

\begin{theorem}\label{theo.stab}
 Let $\u_h = (\u_h^k)_{k=1}^N, p_h = (p_h^k)_{k=1}^N$ be the solution of \eqref{DiscSystem} for $s=1$,
$\gamma_g \geq {\sfrei c_2(w_{\max})} {\cal K}$,
where ${\sfrei c_2(w_{\max})}$ denotes the constant from Lemma~\ref{lem.domainExt} and $\gamma_D$ sufficiently large.
 Under the regularity assumptions stated above, it holds for $n\geq 1$ that
 \begin{eqnarray}
 \begin{aligned}\label{StabBDF1}
  \|\u_h^n\|_{\Omega^n}^2  + \sum_{k=1}^n \|\u_h^k-\u_h^{k-1}\|_{\Omega^k}^2 
  + \Delta t  &\left( |||\u_h^k|||_{h,k}^2 + \gamma_p s_h^k(p_h^k,p_h^k) \right) 
  \\
  &\hspace{1.5cm}\leq c\exp( {\sfrei c_1(w_{\max})} t_n) \left(\|\u^0\|_{\Omega^0}^2  +  t_n\|\f\|_{\infty,0}^2 \right), 
 \end{aligned}
 \end{eqnarray}
 with {\sfrei ${\sfrei c_1(w_{\max})}$ given in Lemma~\ref{lem.domainExt}} and $\u_h^0 := \pi_h^1 E \u^0$.
\end{theorem}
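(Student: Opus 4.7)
The plan is to perform a diagonal energy test on \eqref{DiscSystem}, reduce everything to quantities on the physical domain $\Omega^k$, and close the argument by a discrete Gronwall inequality. The key tools are the coercivity of ${\cal A}_h^n$ (Lemma~\ref{lem.CoercCont}) and the domain-transfer inequality (Lemma~\ref{lem.domainExt}), which together with the assumption $\gamma_g \geq c_2(w_{\max}){\cal K}$ will let us absorb the price paid for switching from $\Omega^{k-1}$ to $\Omega^k$.

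First, I would test \eqref{DiscSystem} with $(\v_h, q_h) = (\u_h^k, p_h^k)$. The Nitsche pressure contributions cancel by skew-symmetry, and Lemma~\ref{lem.CoercCont} yields
\[
\bigl(D_t^{(1)} \u_h^k, \u_h^k\bigr)_{\Omega^k}
+ \tfrac{1}{2}|||\u_h^k|||_{h,k}^2
+ \tfrac{1}{2}\gamma_p s_h^k(p_h^k, p_h^k)
\leq (\f, \u_h^k)_{\Omega^k}.
\]
For the discrete time derivative I use the polarisation identity
\[
2\Delta t\,(D_t^{(1)}\u_h^k,\u_h^k)_{\Omega^k}
= \|\u_h^k\|_{\Omega^k}^2 - \|\u_h^{k-1}\|_{\Omega^k}^2 + \|\u_h^k-\u_h^{k-1}\|_{\Omega^k}^2.
\]
The problematic term is $\|\u_h^{k-1}\|_{\Omega^k}^2$, which lives on the \emph{new} domain; this is exactly where the moving-geometry analysis departs from the classical one.

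Second, since $\Omega^k \subset \Omega_\delta^{k-1}$ by \eqref{Omegani}, I apply \eqref{secondStatement} to $\u_h^{k-1}\in {\cal V}_h^{k-1}$ to obtain
\[
\|\u_h^{k-1}\|_{\Omega^k}^2
\leq \bigl(1+c_1(w_{\max})\Delta t\bigr)\|\u_h^{k-1}\|_{\Omega^{k-1}}^2
+ \tfrac{\Delta t}{2}\|\nabla \u_h^{k-1}\|_{\Omega^{k-1}}^2
+ c_2(w_{\max})\Delta t\,{\cal K}\,g_h^{k-1}(\u_h^{k-1},\u_h^{k-1}).
\]
Inserting this, multiplying by $\Delta t$ and rearranging gives, at each step,
\[
\|\u_h^k\|_{\Omega^k}^2 - \|\u_h^{k-1}\|_{\Omega^{k-1}}^2
+ \|\u_h^k-\u_h^{k-1}\|_{\Omega^k}^2
+ \Delta t\,|||\u_h^k|||_{h,k}^2
+ \Delta t\,\gamma_p s_h^k(p_h^k,p_h^k)
\leq R^{k-1} + 2\Delta t\,(\f,\u_h^k)_{\Omega^k},
\]
where $R^{k-1}$ collects $c_1\Delta t\|\u_h^{k-1}\|_{\Omega^{k-1}}^2$, $\tfrac{\Delta t}{2}\|\nabla\u_h^{k-1}\|_{\Omega^{k-1}}^2$ and $c_2\Delta t\,{\cal K}\,g_h^{k-1}(\u_h^{k-1},\u_h^{k-1})$.

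Third, I sum over $k=1,\dots,n$. The gradient and ghost-penalty contributions of $R^{k-1}$ are shifted by one index and are absorbed into the left-hand side by the previous time step provided $\gamma_g \geq c_2(w_{\max}){\cal K}$ (handling the ghost penalty term) and $\tfrac{1}{2}\leq 1$ (handling the $\|\nabla\cdot\|^2$ term within $|||\cdot|||_{h,k-1}^2$). This is the main technical obstacle: without the $\gamma_g$ assumption the extension error from the moving geometry cannot be closed. The forcing is handled by $2\Delta t(\f,\u_h^k)_{\Omega^k}\leq \Delta t\|\f\|_{\Omega^k}^2+\Delta t\|\u_h^k\|_{\Omega^k}^2$, and the initial contribution is controlled by $\|\u_h^0\|_{\Omega^0}\leq \|E\u^0\|_{\Omega^0}\leq c\|\u^0\|_{\Omega^0}$ via the $L^2$-stability of $\pi_h^1$ and of the extension.

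Finally, the remaining term $c_1\Delta t\,\sum_{k}\|\u_h^{k-1}\|_{\Omega^{k-1}}^2$ together with the Young-type term from the right-hand side has the exact form required to apply a discrete Gronwall lemma, producing the exponential factor $\exp(c_1(w_{\max}) t_n)$ and the $t_n\|\f\|_{\infty,0}^2$ term in \eqref{StabBDF1}. The BDF(2) variant would follow the same outline but with the standard BDF(2) polarisation identity (involving the quantity $\|\u_h^k\|^2 + \|2\u_h^k-\u_h^{k-1}\|^2$), and would additionally require transferring the value $\|2\u_h^{k-1}-\u_h^{k-2}\|^2$ between $\Omega^{k-1}$ and $\Omega^k$, which is the reason $c_1, c_2$ in Lemma~\ref{lem.domainExt} carry the factor $s$.
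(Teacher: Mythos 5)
Your proposal is correct and follows essentially the same route as the paper's proof: diagonal testing combined with the coercivity of ${\cal A}_h^n$, the telescoping identity for $D_t^{(1)}$, the transfer of $\|\u_h^{k-1}\|_{\Omega^k}^2$ to $\Omega^{k-1}$ via \eqref{secondStatement} with absorption under $\gamma_g \geq c_2(w_{\max}){\cal K}$, Young's inequality for the forcing, and a discrete Gronwall argument. The only cosmetic slip is saying ``multiplying by $\Delta t$'' where the factor $2\Delta t$ (consistent with the polarisation identity you wrote) is meant; this does not affect the argument.
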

\begin{proof}
 Testing \eqref{DiscSystem} with $\v_h = 2\Delta t \u_h^n, q_h = 2\Delta t p_h^n$, using the coercivity \eqref{coerc} and the relation 
 \begin{align}\label{telescope}
{\sfrei -2(\u_h^n, \u_h^{n-1})_{\Omega^n} =  \|\u_h^n - \u_h^{n-1}\|_{\Omega^n}^2 -\|\u_h^n\|_{\Omega^n}^2 - \|\u_h^{n-1}\|_{\Omega^n}^2}
 \end{align}
yields for $n>1$
 \begin{eqnarray}\label{StabAnsatz}
 \begin{aligned}
  \|\u_h^n\|_{\Omega^n}^2  +  \|\u_h^n - \u_h^{n-1}\|_{\Omega^n}^2 - \|\u_h^{n-1}\|_{\Omega^n}^2  &+ \Delta t |||\u_h^n|||_{h,n}^2
 + \Delta t \gamma_p s_h(p_h^n, p_h^n)  \leq 2\Delta t (\f,\u_h^n)_{\Omega^n}.
  \end{aligned}
\end{eqnarray}

We bring the term $\|\u_h^{n-1}\|_{\Omega^n}^2$ to $\Omega^{n-1}$ by using Lemma~\ref{lem.domainExt}
\begin{eqnarray}\label{fromLemma}
\begin{aligned}
 \|\u_h^{n-1}\|_{\Omega^n}^2 
 &\leq \left(1 + {\sfrei c_1(w_{\max})} \Delta t \right)  \|\u_h^{n-1}\|_{\Omega^{n-1}}^2 
 + \frac{ \Delta t}{2} \|\nabla \u_h^{n-1}\|_{\Omega^{n-1}}^2\\
 &\qquad\qquad+  {\sfrei c_2(w_{\max})} {\cal K} \Delta t g_h^{n-1}(\u_h^{n-1}, \u_h^{n-1}).
\end{aligned}
\end{eqnarray}

Inserting \eqref{fromLemma} into \eqref{StabAnsatz} we have
 \begin{eqnarray}\label{StabFirstEstimate}
 \begin{aligned}
  \|\u_h^n&\|_{\Omega^n}^2   +\|\u_h^n-\u_h^{n-1}\|_{\Omega^n}^2 + \Delta t \left(||| \u_h^n|||_{h,n}^2  +  \gamma_p s_h(p_h^n, p_h^n)\right)  \\
 &\leq 2\Delta t (\f,\u_h^n)_{\Omega^n} + \left(1 + {\sfrei c_1(w_{\max})} \Delta t \right)  \|\u_h^{n-1}\|_{\Omega^{n-1}}^2
 + \frac{ \Delta t}{2} \|\nabla \u_h^{n-1}\|_{\Omega^{n-1}}^2\\
 &\qquad\qquad+  \Delta t \gamma_g g_h^{n-1}(\u_h^{n-1}, \u_h^{n-1})
  \end{aligned}
\end{eqnarray}
for $\gamma_g \geq {\sfrei c_2(w_{\max})} K$
and $\gamma_D$ sufficiently large.
For $n=1$, we have instead of \eqref{StabAnsatz}
 \begin{eqnarray}
 \begin{aligned}
  \|\u_h^1\|_{\Omega^1}^2  +  \|\u_h^1 - E\u^0\|_{\Omega^1}^2 - \|E\u^0\|_{\Omega^1}^2  + \Delta t \left(|||\u_h^1|||_{h,1}^2
  +\gamma_p s_h^1(p_h^1, p_h^1)\right)  = 2\Delta t (\f,\u_h^1)_{\Omega^1}.
  \end{aligned}
\end{eqnarray}
%
%
In both cases ($n\geq 1$) we use the Cauchy-Schwarz and Young's inequality for the first term on the right-hand side to get
\begin{align*}
 2\Delta t (\f,\u_h^n)_{\Omega^n} \leq  \Delta t \|\u_h^n\|_{\Omega^n}^2 + \Delta t \|\f\|_{\Omega^n}^2.
\end{align*}
%
%
%
Summing over $k=0,...,n$ in \eqref{StabFirstEstimate} and using the $L^2$-stability of the extension of the initial value yields
 \begin{eqnarray}
\begin{aligned}
 \|\u_h^n\|_{\Omega^n}^2 + \sum_{k=1}^n & \|\u_h^k-\u_h^{k-1}\|_{\Omega^k}^2 + \frac{\Delta t}{2} \left( |||\u_h^k|||_{h,k}^2 + 2 \gamma_p s_h^k(p_h^k, p_h^k) \right) \\
 &\leq c\|\u^0\|_{\Omega^0} + 2 t_n \|\f\|_{\infty,0}^2 + {\sfrei c_1(w_{\max})} \Delta t \sum_{k=0}^{n-1}   \|\u_h^k\|_{\Omega^k}^2.
\end{aligned}
\end{eqnarray}

Application of a discrete Gronwall lemma yields the statement.

\end{proof}

 
 \begin{remark}{(BDF(2))}
 For the BDF(2) variant, we get the 
 stability estimate \eqref{StabBDF1} with the weaker dissipation $\|\u_h^k-2\u_h^{k-1}+ \u_h^{k-2}\|_{\Omega^k}^2$ instead of $\|\u_h^k - \u_h^{k-1}\|_{\Omega^k}^2$. 
 To this end, one uses the relation 
 \begin{align*}
  (3\u_h^n - 4 \u_h^{n-1} + \u_h^{n-2}, \u_h^n)_{\Omega^n} = \frac{1}{2} \big( \|&\u_h^n\|_{\Omega^n}^2 -  \|\u_h^{n-1}\|_{\Omega^n}^2 + \|2 \u_h^n-\u_h^{n-1}\|_{\Omega^n}^2 \\
  &-  \|2\u_h^{n-1}-\u_h^{n-2}\|_{\Omega^n}^2 
  + \|\u_h^n - 2\u_h^{n-1} + \u_h^{n-1}\|_{\Omega^n}^2\big) 
 \end{align*}
 instead of \eqref{telescope}.
\end{remark}

\subsection{Stability estimate for the pressure}
\label{sec.pressstab}

We show the following stability estimates for the $L^2$- and $H^1$-semi-norm of pressure.

\begin{lemma}\label{lem.pressstab}
Let $(\u_h^n, p_h^n)$ be the discrete solution of \eqref{DiscSystem}. For $n\geq 1$ it holds that
 \begin{align}
  \|p_h^n\|_{\Omega^n}^2 &\leq c \left(\|D_t^{(s)} \u_h^n\|^2_{\Omega^n} + |||\u_h^n|||_{h,n}^2 + s_h^n(p_h^n,p_h^n) 
 + \|\f\|_{\Omega^n}^2\right),\label{pressL2stab}\\
 h^2\|\nabla p_h^n\|_{\Omega^n}^2 &\leq c \left(h^2 \|D_t^{(s)} \u_h^n\|_{\Omega^n}^2 + |||\u_h^n|||_{h,n}^2 + s_h^n(p_h^n,p_h^n) 
 + \|\f\|_{\Omega^n}^2\right),\label{pressH1stab}
\end{align}
where $\u_h^0 = \pi_h^1 E \u^0$.
\end{lemma}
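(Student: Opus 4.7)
The plan is to establish the two bounds in reverse order: first the gradient estimate \eqref{pressH1stab}, and then use it together with the modified inf-sup of Lemma~\ref{lem.modinfsup} to obtain the $L^2$-estimate \eqref{pressL2stab}. The main obstacle is precisely the extra $h\|\nabla p_h^n\|_{\Omega^n}$ term on the right-hand side of Lemma~\ref{lem.modinfsup}, which cannot be absorbed into the left of \eqref{pressL2stab} directly. This forces one to establish \eqref{pressH1stab} first, and the right vehicle is the Oswald/Cl\'ement-type operator $C_h^n$ supplied by the CIP pressure stabilisation in \eqref{stabass}: testing against $h^2 C_h^n \nabla p_h^n$ generates $h^2\|\nabla p_h^n\|^2_{\Omega^n}$ on the left plus an $s_h^n$-error coming from replacing $\nabla p_h^n$ by $C_h^n\nabla p_h^n$.

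For \eqref{pressH1stab}, I would test \eqref{DiscSystem} with $(\v_h, 0)$ for an arbitrary $\v_h \in {\cal V}_h^n$. Integrating the pressure term in ${\cal A}_S^n$ by parts, the boundary contribution $-(p_h^n\n, \v_h)_{\partial\Omega^n}$ is cancelled exactly by the term $+(p_h^n\n, \v_h)_{\partial\Omega^n}$ inside $a_D^n(\u_h^n, p_h^n; \v_h, 0)$, yielding the identity
\[
(\nabla p_h^n, \v_h)_{\Omega^n} = (\f - D_t^{(s)}\u_h^n, \v_h)_{\Omega^n} - (\nabla \u_h^n, \nabla \v_h)_{\Omega^n} - \tilde a_D^n(\u_h^n; \v_h) - \gamma_g g_h^n(\u_h^n, \v_h),
\]
where $\tilde a_D^n(\u_h^n;\v_h) := -(\partial_n\u_h^n, \v_h)_{\partial\Omega^n} - (\u_h^n, \partial_n\v_h)_{\partial\Omega^n} + \tfrac{\gamma_D}{h}(\u_h^n, \v_h)_{\partial\Omega^n}$ collects the velocity-only Nitsche contributions. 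Setting $\v_h := h^2 C_h^n \nabla p_h^n \in {\cal V}_h^n$, the second inequality of \eqref{stabass} together with Cauchy--Schwarz and Young applied to $h^2(\nabla p_h^n, \nabla p_h^n - C_h^n \nabla p_h^n)_{\Omega^n}$ gives the lower bound
\[
(\nabla p_h^n, \v_h)_{\Omega^n} \geq \tfrac{1}{2} h^2 \|\nabla p_h^n\|^2_{\Omega^n} - \tfrac{1}{2} s_h^n(p_h^n, p_h^n).
\]
For the right-hand side, the third inequality of \eqref{stabass} together with finite-element inverse estimates on $C_h^n \nabla p_h^n \in {\cal V}_h^n$ yield $\|\v_h\|_{\Omega^n} \leq c h(h\|\nabla p_h^n\|_{\Omega^n} + s_h^n(p_h^n, p_h^n)^{1/2})$ and $|||\v_h|||_{h,n} \leq c(h\|\nabla p_h^n\|_{\Omega^n} + s_h^n(p_h^n, p_h^n)^{1/2})$. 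Combined with the velocity-only case of the continuity bound from Lemma~\ref{lem.CoercCont} (taking the pressure component zero) applied to the gradient, Nitsche and ghost-penalty terms, Young's inequality with a sufficiently small parameter absorbs $h^2\|\nabla p_h^n\|^2_{\Omega^n}$ and $s_h^n(p_h^n,p_h^n)$ to the left-hand side and produces \eqref{pressH1stab}.

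For \eqref{pressL2stab}, I would apply Lemma~\ref{lem.modinfsup}: the trailing $h\|\nabla p_h^n\|_{\Omega^n}$ is directly bounded by the square root of \eqref{pressH1stab}. For the supremum, integration by parts rewrites the numerator as $({\rm div}\,\v_h, p_h^n)_{\Omega^n} - (\v_h \cdot \n, p_h^n)_{\partial\Omega^n} = -(\nabla p_h^n, \v_h)_{\Omega^n}$, and the identity derived above expresses this as
\[
(D_t^{(s)}\u_h^n - \f, \v_h)_{\Omega^n} + (\nabla \u_h^n, \nabla \v_h)_{\Omega^n} + \tilde a_D^n(\u_h^n; \v_h) + \gamma_g g_h^n(\u_h^n, \v_h).
\]
Using the discrete Friedrichs-type bound $\|\v_h\|_{\Omega^n} \leq c|||\v_h|||_{h,n}$ (the boundary term in $|||\cdot|||_{h,n}$ acts as a weak boundary penalty) together with the velocity-only continuity bound of Lemma~\ref{lem.CoercCont}, the entire expression is dominated by $c(\|D_t^{(s)}\u_h^n\|_{\Omega^n} + |||\u_h^n|||_{h,n} + \|\f\|_{\Omega^n})|||\v_h|||_{h,n}$. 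Dividing by $|||\v_h|||_{h,n}$, taking the supremum over $\v_h \in {\cal V}_h^n$, combining with \eqref{pressH1stab} via Lemma~\ref{lem.modinfsup}, and squaring yields \eqref{pressL2stab}.
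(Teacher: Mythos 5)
Your proposal is correct and follows essentially the same route as the paper: the $H^1$-bound is obtained by testing with $h^2 C_h^n\nabla p_h^n$ and exploiting the properties \eqref{stabass}, with the pressure boundary terms cancelling against the Nitsche contribution, and the $L^2$-bound then follows from Lemma~\ref{lem.modinfsup} combined with the discrete equation, the continuity/Poincar\'e estimates and the already established gradient bound. The only difference is cosmetic: you perform the integration by parts and the boundary-term cancellation once at the level of a velocity-only identity before inserting the test function, whereas the paper integrates $h^2(C_h^n\nabla p_h^n,\nabla p_h^n)_{\Omega^n}$ by parts and cancels the resulting boundary term against the one in $a_D^n$ afterwards.
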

\begin{proof}
First, we derive a bound for $h^2\|\nabla p_h^n\|^2_{\sfrei \Omega^n}$. To this end, we extend $\nabla p_h^n$ by zero to $\Omega_{\delta,h}^n\setminus \Omega_h^n$, using 
the same notation for the extended function.
We insert $\pm C_h^n \nabla p_h^n$, where $C_h^n: L^2(\Omega_{\delta,h}^n)^d \to {\cal V}_h^n$ is the interpolation
operator used in \eqref{stabass},
and integrate by parts
\begin{eqnarray}
\begin{aligned}\label{threeParts}
 h^2\|\nabla p_h^n\|^2_{\sfrei \Omega^n} &= h^2 (\nabla p_h^n - C_h^n\nabla p_h^n, \nabla p_h^n)_{\Omega^n} + h^2 (C_h^n\nabla p_h^n, \nabla p_h^n)_{\Omega^n} \\
 &= h^2 (\nabla p_h^n - C_h^n\nabla p_h^n, \nabla p_h^n)_{\Omega^n} - h^2 ({\rm div} (C_h^n\nabla p_h^n), p_h^n)_{\Omega^n}
 + h^2 (C_h^n\nabla p_h^n, p_h^n \n)_{\partial\Omega^n}.
\end{aligned}
\end{eqnarray}
For the first term, we have by means of \eqref{stabass} and Young's inequality
\begin{align}\label{PressH1firstterm}
 h^2 (\nabla p_h^n - C_h^n\nabla p_h^n, \nabla p_h^n)_{\Omega^n} \leq h^2 \|\nabla p_h^n - C_h^n\nabla p_h^n\|_{\Omega_h^n} \|\nabla p_h^n\|_{\Omega^n} \leq c s_h^n(p_h^n, p_h^n) + \frac{h^2}{4}\|\nabla p_h^n\|_{\Omega^n}^2.
\end{align}
{\sfrei The last term in \eqref{PressH1firstterm} will be absorbed into the left-hand side of \eqref{threeParts}.}
For the second term on the right-hand side of \eqref{threeParts}, we use that $(\u_h^n, p_h^n)$ solves the discrete system \eqref{DiscSystem}
\begin{eqnarray}
\begin{aligned}\label{h2divChn}
 - h^2 ({\rm div} (C_h^n\nabla p_h^n), p_h^n)_{\Omega^n} = &-h^2 \Big(  \left(D_t^{(s)} \u_h^n, C_h^n\nabla p_h^n\right)_{\Omega^n} + \left( \nabla \u_h^n, \nabla (C_h^n\nabla p_h^n)\right)_{\Omega^n} \\
 &+ a_D^n(\u_h^n, p_h^n; C_h^n\nabla p_h^n,0) +\gamma_g g_h^n(\u_h^n, C_h^n\nabla p_h^n) - \left(\f,C_h^n\nabla p_h^n\right)_{\Omega^n}
 \Big).
\end{aligned}
\end{eqnarray}
{\sfrei To estimate the first term on the right-hand side of \eqref{h2divChn}, we use the Cauchy-Schwarz inequality
and \eqref{stabass} to get 
\begin{align}\label{Dtuhn1}
-h^2 \left(D_t^{(s)} \u_h^n, C_h^n\nabla p_h^n\right)_{\Omega^n} \leq ch \|D_t^{(s)} \u_h^n\|_{\Omega^n} \, \left(h\|\nabla p_h^n\|_{\Omega^n} +  s_h^n(p_h^n, p_h^n)\right).
\end{align}
Similarly, we get for the second and the last term on the right-hand side of \eqref{h2divChn}
\begin{eqnarray}
\begin{aligned}
-h^2 \left( \nabla \u_h^n, \nabla (C_h^n\nabla p_h^n)\right)_{\Omega^n} &\leq c \|\nabla \u_h^n\|_{\Omega^n}
\left( h\|\nabla p_h^n\|_{\Omega^n} + s_h^n(p_h^n, p_h^n)\right), \\
h^2\left(\f,C_h^n\nabla p_h^n\right)_{\Omega^n} &\leq c \|\f\|_{\Omega^n} \left( h\|\nabla p_h^n\|_{\Omega^n} + s_h^n(p_h^n, p_h^n)^{1/2}\right).
\end{aligned}
\end{eqnarray}
}
For the Nitsche term $a_D^n$, we have as in \eqref{Nitsche4p} 
\begin{eqnarray}
\begin{aligned}\label{aDn}
 -h^2 a_D^n&(\u_h^n, p_h^n; C_h^n\nabla p_h^n,0) \\
 &=-h^2\left( \frac{\gamma_D}{h} \left(\u_h^n, C_h^n\nabla p_h^n\right)_{\partial\Omega^n} - \left(\partial_n \u_h^n - p_h^n \n, C_h^n\nabla p_h^n\right)_{\partial\Omega^n} 
 + \left(\u_h^n,  \partial_n (C_h^n\nabla p_h^n)\right)_{\partial\Omega^n}\right)  \\
 &\leq ch |||u_h^n|||_{h,n} { \|C_h^n \nabla p_h^n\|_{\Omega_h^n}}  - h^2\left(p_h^n \n, C_h^n\nabla p_h^n\right)_{\partial\Omega^n}\\
 &\leq c |||u_h^n|||_{h,n} { \left(h \|\nabla p_h^n\|_{\Omega^n} + s_h(p_h^n, p_h^n)^{1/2}\right)}  
 - h^2\left(p_h^n \n, C_h^n\nabla p_h^n\right)_{\partial\Omega^n}.
\end{aligned}
\end{eqnarray}
In the last step \eqref{stabass} has been used.
Note that the boundary term on the right-hand side will cancel out with the third term in \eqref{threeParts}. For the ghost penalty {\sfrei we have by means of  
an inverse inequality and \eqref{stabass}
\begin{eqnarray}
\begin{aligned}\label{ghostghn}
 h^2 g_h^n(\u_h^n, C_h^n\nabla p_h^n) &\leq ch g_h^n(\u_h^n, \u_h^n)^{1/2} 
 \|C_h^n \nabla p_h^n\|_{\Omega_{h,\delta}^n} \\
 &\leq c g_h^n(\u_h^n, \u_h^n)^{1/2} 
  \left( h\|\nabla p_h^n\|_{\Omega^n} + s_h^n(p_h^n, p_h^n)^{1/2}\right).
\end{aligned}
\end{eqnarray}
Altogether, \eqref{h2divChn}-\eqref{Dtuhn1} and \eqref{aDn}-\eqref{ghostghn} result in 
\begin{eqnarray}
\begin{aligned}\label{divTerm}
 - h^2 ({\rm div} &(C_h^n\nabla p_h^n), p_h^n)_{\Omega^n} + h^2\left(p_h^n \n, C_h^n\nabla p_h^n\right)_{\partial\Omega^n}\\
&\leq c\left(h\|\nabla p_h^n\|_{\Omega^n} + s_h^n(p_h^n, p_h^n)^{1/2}\right) \Big(h \big\|D_t^{(s)} \u_h^n\big\|_{\Omega^n} + |||\u_h^n|||_{h,n} + \| \f\|_{\Omega^n}\Big)\\
&\leq \frac{h^2}{4} \|\nabla p_h^n\|_{\Omega^n}^2 
 + c \Big(h^2 \big\|D_t^{(s)} \u_h^n\big\|_{\Omega^n}^2 
 + |||\u_h^n|||_{h,n}^2 + \| \f\|_{\Omega^n}^2 + s_h^n(p_h^n, p_h^n) \Big).
\end{aligned}
\end{eqnarray}
In the last step we have applied Young's inequality.
Combination of \eqref{threeParts}, \eqref{PressH1firstterm} an \eqref{divTerm} yields \eqref{pressH1stab}.}
To show \eqref{pressL2stab} we start using the modified inf-sup condition (Lemma~\ref{lem.modinfsup})
 \begin{align}\label{infsup3}
  \beta \|p_h^n\|_{\Omega^n} \leq \sup_{\v_h \in {\cal V}_h^n} \frac{({\rm div}\, \v_h, p_h^n)_{\Omega^n} - (\v_h \cdot \n, p_h^n)_{\partial\Omega^n}}{|||\v_h|||_{h,n}} + h\|\nabla p_h^n\|_{\Omega^n}.
 \end{align}
{\sfrei By \eqref{DiscSystem}, we have
\begin{eqnarray}
\begin{aligned}\label{StartPress}
({\rm div}\, \v_h, p_h^n)_{\Omega^n} - (\v_h \cdot n, p_h^n)_{\partial\Omega^n} &=
   (D_t^{(s)} \u_h^n, \v_h)_{\Omega^n} + {\cal A}_h^n(\u_h^n, 0; \v_h, 0)
      -(\f,\v_h)_{\Omega^n}
\end{aligned}
\end{eqnarray}
To estimate the right-hand side of \eqref{StartPress}, we use the continuity of the bilinear form ${\cal A}_h^n$ \eqref{contDisc} and the Cauchy-Schwarz inequality
 \begin{eqnarray}
 \begin{aligned}\label{finalPress}
  (D_t^{(s)} \u_h^n, \v_h)_{\Omega^n} &+ {\cal A}_h^n(\u_h^n, 0; \v_h, 0)
      -(\f,\v_h)_{\Omega^n}\\
     &\leq c \left( \|D_t^{(s)} \u_h^n\|_{\Omega^n} + \|\f\|_{\Omega^n} \right) \|\v_h\|_{\Omega^n}  + c |||\u_h^n|||_{h,n} |||\v_h|||_{h,n}  \\ 
  &\leq c \left( \|D_t^{(s)} \u_h^n\|_{\Omega^n} + |||\u_h^n|||_{h,n} + \|\f\|_{\Omega^n} \right) |||\v_h|||_{h,n}.
  \end{aligned}
  \end{eqnarray}
  In the last step, we have used that $\|\v_h^n\|_{\Omega^n} \leq c \left( \|\nabla \v_h\|_{\Omega^n} + \|\v_h\|_{\partial\Omega^n}\right) \leq c ||| \v_h|||_{h,n}$ by a Poincar\'e- type estimate.}
Combination of \eqref{infsup3}-\eqref{finalPress} and \eqref{pressH1stab} yields \eqref{pressL2stab}.
\end{proof}

Lemma~\ref{lem.pressstab} gives a stability result for $\|\nabla p_h^k\|_{\Omega^k}$, which results in the following corollary:
\begin{corollary}\label{cor.press}
 Under the assumptions of Theorem~\eqref{theo.stab}, it holds for $s=1$ that
 \begin{eqnarray}
  \begin{aligned}\label{corpress1}
  \|\u_h^n\|_{\Omega^n}^2  &+ \Delta t \sum_{k=1}^n \left( |||\u_h^k|||_{h,k}^2 +\frac{1}{\Delta t} \|\u_h^k-\u_h^{k-1}\|_{\Omega^k}^2+ \gamma_p s_h^k(p_h^k,p_h^k) + \min \{h^2,\Delta t\} \|\nabla p_h^k\|_{\Omega^k}^2\right) \\
  &\leq \exp( {\sfrei c_1(w_{\max})} t_n) \left(c \|\u^0\|_{\Omega^0}  +  2t_n\|\f\|_{\infty,0}^2 \right). 
 \end{aligned}
 \end{eqnarray}
 For $s= 2$, we have
 \begin{eqnarray}
 \begin{aligned}\label{stabslarger1}
    \|\u_h^n\|_{\Omega^n}^2  &+ \Delta t \sum_{k=1}^n \left( |||\u_h^k|||_{h,k}^2 + \gamma_p s_h^k(p_h^k,p_h^k) + \min \{h^2,\Delta t^2\} \|\nabla p_h^k\|_{\Omega^k}^2\right) \\
  &\leq \exp( {\sfrei c_1(w_{\max})} t_n) \left(c \|\u^0\|_{\Omega^0}  +  2t_n\|\f\|_{\infty,0}^2 \right).
 \end{aligned}
 \end{eqnarray}
\end{corollary}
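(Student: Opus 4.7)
The plan is to combine the energy estimate of Theorem~\ref{theo.stab} (or its BDF(2) variant from the preceding remark) with the discrete pressure-gradient bound \eqref{pressH1stab} of Lemma~\ref{lem.pressstab}. All terms on the left of \eqref{corpress1} except the weighted pressure-gradient term are already controlled by Theorem~\ref{theo.stab}, so the only genuinely new task is to absorb $\Delta t\,\min\{h^2,\Delta t^s\}\sum_{k=1}^n\|\nabla p_h^k\|_{\Omega^k}^2$ into quantities that Theorem~\ref{theo.stab} already bounds.

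For the BDF(1) case, I would multiply \eqref{pressH1stab} by the scaling factor $\min\{h^2,\Delta t\}/h^2 = \min\{1,\Delta t/h^2\} \leq 1$, which absorbs the weight into the right-hand side and yields
\begin{align*}
\min\{h^2,\Delta t\}\,\|\nabla p_h^k\|_{\Omega^k}^2 \leq c \bigl(\min\{h^2,\Delta t\}\,\|D_t^{(1)}\u_h^k\|_{\Omega^k}^2 + |||\u_h^k|||_{h,k}^2 + s_h^k(p_h^k,p_h^k) + \|\f\|_{\Omega^k}^2\bigr).
\end{align*}
Then I would multiply by $\Delta t$ and sum. Using $\min\{h^2,\Delta t\}\leq\Delta t$,
\begin{align*}
\Delta t\,\min\{h^2,\Delta t\}\,\|D_t^{(1)}\u_h^k\|_{\Omega^k}^2 \leq \Delta t^2\,\|D_t^{(1)}\u_h^k\|_{\Omega^k}^2 = \|\u_h^k-\u_h^{k-1}\|_{\Omega^k}^2,
\end{align*}
which is precisely the dissipation controlled by Theorem~\ref{theo.stab}. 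The remaining summands $\Delta t\,|||\u_h^k|||_{h,k}^2$, $\Delta t\,s_h^k(p_h^k,p_h^k)$ and $\Delta t\,\|\f\|_{\Omega^k}^2 \leq \Delta t\,\|\f\|_{\infty,0}^2$ are also absorbed by Theorem~\ref{theo.stab}, and \eqref{corpress1} follows.

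For the BDF(2) case the same manipulation with $\min\{h^2,\Delta t^2\}$ in place of $\min\{h^2,\Delta t\}$ leads to the critical estimate
\begin{align*}
\Delta t\,\min\{h^2,\Delta t^2\}\,\|D_t^{(2)}\u_h^k\|_{\Omega^k}^2 \leq \Delta t^3\,\|D_t^{(2)}\u_h^k\|_{\Omega^k}^2 = \tfrac{\Delta t}{4}\,\|3\u_h^k-4\u_h^{k-1}+\u_h^{k-2}\|_{\Omega^k}^2.
\end{align*}
The BDF(2) variant of Theorem~\ref{theo.stab} controls $\sum_k\|\u_h^k-2\u_h^{k-1}+\u_h^{k-2}\|_{\Omega^k}^2$ and also provides the uniform $L^2$-bound on $\|\u_h^k\|_{\Omega^k}$. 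Writing $3\u_h^k-4\u_h^{k-1}+\u_h^{k-2} = (\u_h^k-2\u_h^{k-1}+\u_h^{k-2}) + 2(\u_h^k-\u_h^{k-1})$ and estimating $\|\u_h^k-\u_h^{k-1}\|_{\Omega^k}^2 \leq 2\|\u_h^k\|_{\Omega^k}^2 + 2\|\u_h^{k-1}\|_{\Omega^{k-1}}^2$ (bringing the last norm from $\Omega^k$ to $\Omega^{k-1}$ through Lemma~\ref{lem.domainExt} as in the proof of Theorem~\ref{theo.stab}), the $\Delta t$-weighted sum contributes at most $O(t_n)$ and \eqref{stabslarger1} follows. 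The only delicate point in the whole argument is this weight bookkeeping for the mixed factor $\min\{h^2,\Delta t^s\}$, which has been designed so that $\min\{h^2,\Delta t^s\}\|D_t^{(s)}\u_h^k\|^2$ is dominated by the dissipation that BDF(s) genuinely controls; once that matching is recognised, the remainder of the proof is a direct invocation of Theorem~\ref{theo.stab}.
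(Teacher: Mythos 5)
Your proposal is correct and, for $s=1$, is essentially the paper's own proof: multiplying \eqref{pressH1stab} by $\min\{1,\Delta t/h^2\}\le 1$ is exactly the paper's case distinction $\Delta t \gtrless h^2$, and both arguments reduce the weighted pressure-gradient term to the dissipation $\|\u_h^k-\u_h^{k-1}\|_{\Omega^k}^2$ plus quantities already controlled by Theorem~\ref{theo.stab}. For $s=2$ the only difference lies in the bookkeeping of the critical term $\Delta t^3\|D_t^{(2)}\u_h^k\|_{\Omega^k}^2$: you split $3\u_h^k-4\u_h^{k-1}+\u_h^{k-2}$ into the second difference (controlled by the weaker BDF(2) dissipation) plus twice the first difference, which you bound via the uniform $L^2$ bound and the domain transfer of Lemma~\ref{lem.domainExt}, whereas the paper bounds the whole expression directly by the triangle inequality and \eqref{secondStatement} as $c\Delta t\sum_{i=0}^{2}\bigl(\|\u_h^{k-i}\|_{\Omega^{k-i}}^2+\Delta t\|\nabla \u_h^{k-i}\|_{\Omega^{k-i}}^2+{\cal K}\Delta t\, g_h^{k-i}(\u_h^{k-i},\u_h^{k-i})\bigr)$, without invoking the dissipation at all. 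Both routes use the same ingredients and produce the same ${\cal O}(t_n)$-weighted contribution (absorbed, as in the paper, into the constants of the stability bound); the only imprecision on your side is that the displayed estimate $\|\u_h^k-\u_h^{k-1}\|_{\Omega^k}^2\le 2\|\u_h^k\|_{\Omega^k}^2+2\|\u_h^{k-1}\|_{\Omega^{k-1}}^2$ should, strictly, also carry the $\Delta t$-weighted gradient and ghost-penalty terms generated by the transfer from $\Omega^k$ to $\Omega^{k-1}$ — as you acknowledge parenthetically — and these are likewise controlled by the BDF(2) stability estimate, so no gap results.
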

\begin{proof}
{\sfrei We start by proving \eqref{corpress1} for $s=1$. To this end, we distinguish between the cases $\Delta t\geq h^2$ and $\Delta t<h^2$. In the first case, we note that, by \eqref{pressH1stab}
\begin{eqnarray}
 \begin{aligned}\label{cor1}
 \Delta t h^2\|\nabla p_h^n\|_{\Omega^n}^2 &\leq c \Delta t h^2 \|D_t^{(1)} \u_h^n\|_{\Omega^n}^2 + c\Delta t \left( |||\u_h^n|||_{h,n}^2 + s_h^n(p_h^n,p_h^n) 
 + \|\f\|_{\Omega^n}^2\right)\\
 &\leq c \Delta t^2 \|D_t^{(1)} \u_h^n\|_{\Omega^n}^2 + c\Delta t \left( |||\u_h^n|||_{h,n}^2 + s_h^n(p_h^n,p_h^n) 
 + \|\f\|_{\Omega^n}^2\right).
\end{aligned}
\end{eqnarray}
As $\Delta t^2 \| D_t^{(1)} \u_h^n\|_{\Omega^n}^2 = \|\u_h^n - \u_h^{n-1}\|_{\Omega^n}^2$ \eqref{pressH1stab} follows from Theorem~\ref{theo.stab}.} For $\Delta t < h^2$, we multiply \eqref{cor1} by $\frac{\Delta t}{h^2}$ to get
\begin{eqnarray*}
 \begin{aligned}
   \Delta t^2\|\nabla p_h^n\|_{\Omega^n}^2 &\leq c\Delta t^2 \|D_t^{(1)} \u_h^n\|_{\Omega^n}^2 + c\Delta t \frac{\Delta t}{h^2} \left( |||\u_h^n|||_{h,n}^2 + s_h^n(p_h^n,p_h^n) 
 + \|\f\|_{\Omega^n}^2\right)
 \end{aligned}
 \end{eqnarray*}
and use the same argumentation. For $s=2$, we do not have control over $\Delta t^2 \| D_t^{(2)} \u_h^n\|_{\Omega^n}^2$. Instead, we use the estimate
\begin{align*}
 \Delta t^3\| D_t^{(2)} \u_h^n\|_{\Omega^n}^2 \leq c\Delta t \sum_{k=0}^2 \left(\|\u_h^{n-k} \|_{\Omega^{n-k}}^2 + \Delta t \|\nabla \u_h^{n-k}\|_{\Omega^{n-k}}^2 + {\cal K} \Delta t g_h^{n-k}(\u_h^{n-k}, \u_h^{n-k})\right)
\end{align*}
that follows from the triangle inequality and \eqref{secondStatement}. The estimate \eqref{stabslarger1} follows by a similar argumentation by distinguishing between the cases $\Delta t \lessgtr h$. 
\end{proof}


Concerning the $L^2$-norm of the pressure, Lemma~\ref{lem.pressstab} gives a stability result only for 
\begin{align*}
 \Delta t^2 \sum_{k=1}^n \|p_h^k\|_{\Omega^k}^2,
\end{align*}
even in the case $s=1$. In the case of fixed domains and fixed discretisations, a stability estimate for $\|p_h^k\|$ can be derived by showing an upper bound for the right-hand side in \eqref{pressL2stab}, including the term $\frac{1}{\Delta t} \|\u_h^n-\u_h^{n-1}\|_{\Omega^n}^2$, see for 
example Besier \& Wollner~\cite{BesierWollner}.
The argumentation requires, however, that the term $({\rm div}\, \u_h^{n-1}, {\sfrei \xi_h^n})_{\Omega^n}$ vanishes for $\xi_h^n \in {\cal L}_h^n$.
This is not true in the case of time-dependent domains, as $\u_h^{n-1}$ is not discrete divergence-free with respect to $\Omega^n$
\begin{align*}
 ({\rm div}\, \u_h^{n-1}, {\sfrei \xi_h^n})_{\Omega^n} \neq 0
\end{align*}
for certain $\xi_h^n \in {\cal L}_h^n$.
Moreover, the domain mismatch $\Omega^{n-1} \neq \Omega^n$ causes additional problems in the transfer of the term $|||\u_h^{n-1}|||_{h,n} \neq |||\u_h^{n-1}|||_{h,n-1}$ from one time level to the previous one.
In the error analysis developed in the following section, we will therefore use the $H^1$-stability results in Corollary~\ref{cor.press} for the pressure variable.

\section{Error analysis}
\label{sec.error}

The energy error analysis for the velocities follows largely the argumentation of Lehrenfeld \& Olshanskii~\cite{LehrenfeldOlshanskii} and is based on Galerkin orthogonality and the stability result of Theorem~\ref{theo.stab}. 
We write $\u^n:= \u(t_n), p^n:=p(t_n)$ and introduce the notation 
\begin{align*}
 \e_u^n  := \u^n - \u_h^n, \quad \etab_u^n:= \u^n - I_h^n \u^n, \quad \xib_{h,u}^n := I_h^n \u^n - \u_h^n,\\
 e_p^n  := p^n - p_h^n, \quad \eta_p^n := p^n - i_h^n p^n, \quad \xi_{h,p}^n := i_h^n p^n - p_h^n
\end{align*}
for $n\geq 1$, where $I_h^n$ denotes the standard Lagrangian nodal interpolation to ${\cal T}_{h,\delta}^n$ and $i_h^n$ a generalised $L^2$-stable interpolation (for example the Cl\'ement interpolation) 
to ${\cal T}_{h}^n$. Moreover, we set 
\begin{align*}
 \e_u^0=\etab_u^0=\xib_{h,u}^0 = 0.
\end{align*}
This is possible, as $\u_h^0$ cancels out in the summed space-time system \eqref{completeSpaceTime}.
The following estimates for the interpolation errors are well-known
\begin{align}
 \|\etab_u^n\|_{H^l(\Omega)} &\leq ch^{k-l} \|\u^n\|_{H^k(\Omega)} \quad &&\text{for }\, 0\leq l \leq 1,\quad 2 \leq k\leq m+1,\label{interpol1}\\
 \|\eta_p^n\|_{H^l(\Omega)} &\leq ch^{k-l} \|p^n\|_{H^k(\Omega)} \quad &&\text{for } \, 0\leq l\leq 1, \quad 1\leq k\leq m+1,\label{interpol2}\\
 \|\eta_p^n\|_{H^l(\partial\Omega)} &\leq ch^{k-l-1/2} \|p^n\|_{H^k(\Omega)} \quad &&\text{for }\, 0\leq l\leq 1, \quad 1< k\leq m+1.\label{interpolTrace}
\end{align}
We will again make use of the extension operators $E^n$ introduced in Section~\ref{sec.ext}. For better readability, we will sometimes skip the operators $E^n$ assuming that quantities
that would be undefined on the domains of integration are extended smoothly. 

For the error analysis, we assume that the solution $(\u,p)$ to \eqref{Stokes} lies in $L^2(I, H^{m+1}(\Omega(t))^d) \times L^2(I,H^{m}(\Omega(t)))$ for $m\geq 1$. 
Then, we can incorporate the Nitsche terms in the variational formulation on the continuous level and see that $(\u,p)$ is the solution to
\begin{align}\label{StokesNitsche}
 (\partial_t \u, \v)_{\Omega(t)} + {\cal A}_S(\u,p;\v,q) + a_D^n(\u, p; \v, q) &= (\f, \v)_{\Omega(t)} \quad \forall \v \in \tilde{\cal V}(t), \; q \in {\cal L}(t) \quad \text{ a.e. in } t \in I,
\end{align}
where 
\begin{align*}
 \tilde{\cal V}(t) &:= H^1(\Omega(t))^d.
\end{align*}

\subsection{Energy error}

As a starting point for the error estimation, we subtract \eqref{DiscSystem} from \eqref{StokesNitsche} to obtain the orthogonality relation
\begin{eqnarray}\label{Galerkin}
 \begin{aligned}
   \big(D_t^{(s)} &\e_u^n, \v_h\big)_{\Omega^n}  + ({\cal A}_S^n+a_D^n)(\e_u^n,e_p^n;\v_h,q_h) 
 + \gamma_g g_h(\e_u^n, \v_h) + \gamma_p s_h(\xi_{h,p}^n, q_h)  \\
 &= \underbrace{ (D_t^{(s)} \u^n - \partial_t \u(t_n), \v_h)_{\Omega^n} + \gamma_g g_h(\u^n, \v_h) + \gamma_p s_h(i_h^n p^n, q_h)}_{ =:{\cal E}_c^n(\v_h, q_h)} 
 \qquad \forall \v_h \in {\cal V}_h^n, q_h \in {\cal L}_h^n, 
  \end{aligned}
\end{eqnarray}
for $n\geq s$ with the consistency error ${\cal E}_c^n(\v_h, q_h)$. 
Note that this relation holds in particular also for $n=s$, as we have defined $\e_u^0=0$.
We have used a different splitting in the pressure stabilisation $s_h^n$ compared to the other terms, in order to include 
the case $p\in H^1(\Omega)$ ($m=1$), where $s_h^n(p^n, q_h)$ would not be well-defined.

We further split \eqref{Galerkin} into interpolation and discrete error parts
 \begin{eqnarray}\label{Galerkin2}
 \begin{aligned}
  \left(D_t^{(s)} \xib_{h,u}^n, \v_h\right)_{\Omega^n}  + \left({\cal A}_S^n + a_D^n\right)&(\xib_{h,u}^n,\xi_{h,p}^n;\v_h,q_h) 
 + \gamma_g g_h^n(\xib_{h,u}^n, \v_h)
 + \gamma_p s_h(\xi_{h,p}^n, q_h) \\
 &= -{\cal E}_i^n(\v_h, q_h) + {\cal E}_c^n(\v_h, q_h) 
 \quad \forall \v_h \in {\cal V}_h^n, q_h \in {\cal L}_h^n, 
  \end{aligned}
\end{eqnarray}
where the interpolation error is defined by 
\begin{align}\label{interpolError}
{\cal E}_i^n(\v_h, q_h) :=  (D_t^{(s)}\etab_u^n , \v_h)_{\Omega^n}  + \left({\cal A}_S^n + a_D^n\right)(\etab_u^n,\eta_p^n;\v_h,q_h)
 + \gamma_g g_h(\etab_u^n, \v_h).
\end{align}
We will apply the stability result of Theorem~\ref{theo.stab} to \eqref{Galerkin2}, which will be the basis of the error estimate. For better readability, we will restrict restrict ourselves again to the case 
$s=1$ first. Let us first estimate the consistency and interpolation errors.

\begin{lemma}{(Consistency error)}\label{lem.consistency}
 Let $\u \in W^{2,\infty}(I_n, L^2(\Omega^n)^d) \cap L^{\infty}(I_n, H^{m+1}(\Omega^n)^d)$ and $p\in L^{\infty}(I_n, H^{m}(\Omega^n))$. Under the assumptions made in Section~\ref{sec.disc}, {\sfrei including Assumption~\ref{ass.T}}, it holds for $s=1$,
 $\v_h\in {\cal V}_h^n, q_h\in {\cal L}_h^n$ and $n\geq 1$ that
 \begin{align*}
  |{\cal E}_c^n(\v_h, q_h)| \leq \,&c {\sfrei \Delta t^{\frac{1}{2}} \|\partial_t^2 \u\|_{Q^n}}  \|\v_h\|_{\Omega^n}\\
  &+ ch^{m} \left(\|\u\|_{H^{m+1}(\Omega^n)} + \|p\|_{H^{m}(\Omega^n)}\right) \left(g_h^n(\v_h, \v_h)^{1/2} + s_h^n(q_h, q_h)^{1/2}\right).
 \end{align*}
\end{lemma}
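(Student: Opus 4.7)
The plan is to split $\mathcal{E}_c^n(\v_h, q_h)$ into its three natural components, namely the time-discretisation consistency $(D_t^{(1)} \u^n - \partial_t \u(t_n), \v_h)_{\Omega^n}$, the ghost-penalty consistency $\gamma_g g_h^n(\u^n, \v_h)$, and the pressure-stabilisation consistency $\gamma_p s_h^n(i_h^n p^n, q_h)$, and estimate each separately. The first contribution will yield the $\Delta t^{1/2}\|\partial_t^2 \u\|_{Q^n}$ factor, while the latter two will together generate the $h^m$-term.

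For the time-discretisation consistency I would apply integration by parts twice in the identity $\u^{n-1} = \u^n - \int_{t_{n-1}}^{t_n} \partial_t \u(s)\, ds$ to obtain the Taylor-type representation
\begin{equation*}
D_t^{(1)} \u^n - \partial_t \u(t_n) = -\frac{1}{\Delta t} \int_{t_{n-1}}^{t_n} (s - t_{n-1})\, \partial_t^2 \u(s)\, ds,
\end{equation*}
which is understood with $\u$ replaced by its extension $E^n \u$ so that the integrand is defined on a neighbourhood of $\Omega^n$. Testing against $\v_h$ over $\Omega^n$ and applying Cauchy-Schwarz in both space and time, together with the bound $\bigl(\int_{t_{n-1}}^{t_n} (s-t_{n-1})^2 \,ds\bigr)^{1/2} = c\Delta t^{3/2}$, yields exactly the claimed $c \Delta t^{1/2} \|\partial_t^2 \u\|_{Q^n} \|\v_h\|_{\Omega^n}$, where the extension bound \eqref{Extdt2} controls $\|\partial_t^2 E^n \u\|$ by $\|\partial_t^2 \u\|_{Q^n}$.

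For the ghost-penalty term I would apply the Cauchy-Schwarz inequality and then the weak-consistency estimate from Lemma~\ref{lem.ghost}, giving $g_h^n(\u^n, \u^n)^{1/2} \leq c h^m \|E^n \u^n\|_{H^{m+1}(\Omega_\delta^n)} \leq c h^m \|\u^n\|_{H^{m+1}(\Omega^n)}$ via \eqref{extension}. For the pressure stabilisation, I first apply Cauchy-Schwarz in $s_h^n$, and then split $s_h^n(i_h^n p^n, i_h^n p^n)^{1/2} \leq s_h^n(i_h^n p^n - p^n, i_h^n p^n - p^n)^{1/2} + s_h^n(p^n, p^n)^{1/2}$. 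The first summand is controlled by \eqref{stabass} together with the interpolation estimate \eqref{interpol2}, producing $c h \cdot h^{m-1} \|p^n\|_{H^m} = c h^m \|p^n\|_{H^m(\Omega^n)}$. The second summand is exactly zero for $m\geq 2$ by the consistency \eqref{stabass2}, while for $m=1$ it is directly dominated by $c h \|p^n\|_{H^1}$ via \eqref{stabass} applied to $p^n$ itself, covering both cases uniformly with the same bound.

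The only subtle point is the handling of the extension in the time-consistency step: the intermediate values $\partial_t^2 \u(s)$ for $s \in I_n$ naturally live on $\Omega(s)$, but the integration takes place over $\Omega^n = \Omega(t_n)$, so strictly one must work with $\partial_t^2 (E^n\u)(s)$ and invoke \eqref{Extdt2} to bound it in terms of the intrinsic $H^2(Q^n)$-norm of $\u$. Apart from this bookkeeping, the remaining estimates are direct consequences of Cauchy-Schwarz, Lemma~\ref{lem.ghost}, and the interpolation and stabilisation properties already recorded in \eqref{stabass}, \eqref{stabass2}, and \eqref{interpol2}.
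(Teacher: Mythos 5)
Your treatment of the time-discretisation consistency and of the ghost-penalty term coincides with the paper's proof: the same Taylor/integration-by-parts representation of $D_t^{(1)}\u^n-\partial_t\u(t_n)$ with the extension $E^n\u$, Cauchy--Schwarz in time, the bound \eqref{Extdt2}, and for the ghost penalty Cauchy--Schwarz plus the weak consistency of Lemma~\ref{lem.ghost} and the extension stability \eqref{extension}. The gap is in the pressure-stabilisation term, specifically in your claim that the case $m=1$ is covered "uniformly" by splitting $i_h^n p^n=(i_h^n p^n-p^n)+p^n$ and bounding $s_h^n(p^n,p^n)^{1/2}$ by $ch\|p^n\|_{H^1(\Omega^n)}$ "via \eqref{stabass} applied to $p^n$ itself". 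For $m=1$ the solution is only assumed to satisfy $p^n\in H^1(\Omega^n)$, and then $s_h^n(p^n,p^n)$ is not even well defined: the CIP form involves face integrals of $\jump{\partial_n p}$, and the normal derivative of an $H^1$ function has no $L^2$ trace on faces. Correspondingly, \eqref{stabass} is stated only for $q\in{\cal V}_h^n\cup H^2(\Omega^n)$, and $p^n$ belongs to neither set when $m=1$. This is precisely why the paper distinguishes the two cases and, for $m=1$, never inserts $p^n$ into $s_h^n$: it estimates $s_h^n(i_h^n p^n,q_h)\le s_h^n(i_h^n p^n,i_h^n p^n)^{1/2}s_h^n(q_h,q_h)^{1/2}\le ch\|i_h^n p^n\|_{H^1(\Omega^n)}s_h^n(q_h,q_h)^{1/2}$ using that $i_h^n p^n$ is discrete, and then the $H^1$-stability of the Cl\'ement interpolation. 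Your argument for $m=1$ as written would fail at the very first step.

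A secondary, more technical point concerns your bound of the first summand $s_h^n(i_h^n p^n-p^n,\,i_h^n p^n-p^n)^{1/2}$ for $m\ge 2$ by "\eqref{stabass} together with \eqref{interpol2}": the quantitative bound $s_h^n(q,q)\le ch^2\|q\|_{H^1(\Omega^n)}^2$ in \eqref{stabass} is stated for $q\in{\cal V}_h^n\cup H^2(\Omega^n)$, and the interpolation error $\eta_p^n$ lies in neither class (it is a difference of a discrete function and an $H^m$ function), so this application is not licensed by the stated assumptions; moreover, with the higher-order jumps in the cut-face part of $s_h^n$, an $H^1$-norm bound alone would not give the needed order. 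The paper instead estimates $s_h^n(\eta_p^n,\eta_p^n)^{1/2}\le ch^m\|p^n\|_{H^m(\Omega^n)}$ directly via the trace interpolation estimate \eqref{interpolTrace}, combined with $s_h^n(p^n,p^n)=0$ from \eqref{stabass2}. With these two repairs (the paper's separate $m=1$ argument and the trace-based interpolation bound for $m\ge2$), your proof coincides with the one in the paper.
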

\begin{proof}
For the first part of the consistency error, we have using integration by parts {\sfrei and a Cauchy-Schwarz inequality in time
\begin{align*}
 \frac{1}{\Delta t} \left(\u^n - E^n \u^{n-1}\right) - \partial_t \u(t_n) &= -\frac{1}{\Delta t} \int_{t_{n-1}}^{t_n} \partial_t (E^n \u(t)) - \partial_t (E^n \u(t_n)) \, d\text{t} \\
 &= -\frac{1}{\Delta t} \int_{t_{n-1}}^{t_n} (t-t_{n-1})\partial_t^2 (E^n \u(t)) \, d\text{t}\\
 &\leq \frac{1}{\Delta t} \left(\int_{t_{n-1}}^{t_n} (t-t_{n-1})^2 d\text{t}\right)^{1/2} \left(\int_{t_{n-1}}^{t_n} \partial_t^2 (E^n \u(t))^2 d\text{t}\right)^{1/2}\\
 &\leq \Delta t^{1/2} \left(\int_{t_{n-1}}^{t_n} \partial_t^2 (E^n \u(t))^2 d\text{t}\right)^{1/2}.
\end{align*}}
{\sfrei Using \eqref{Extdt2}} this implies
\begin{eqnarray}
\begin{aligned}\label{consTime1}
\Big|\frac{1}{\Delta t} \left(\u^n - E^n\u^{n-1}, \v_h\right)_{\Omega^n} - (\partial_t \u(t_n), \v_h)_{\Omega^n}\Big| &\leq c {\sfrei \Delta t^{1/2} \|\partial_t^2 (E^n \u)\|_{Q_\delta^n}} \|\v_h\|_{\Omega^n}\\
&\leq c {\sfrei \Delta t^{1/2} \|\partial_t^2 \u\|_{Q^n}} \|\v_h\|_{\Omega^n}.
\end{aligned}
\end{eqnarray}
{\sfrei The extension operator $E^n$ is needed, as the integration domain in the left-hand side of \eqref{consTime1} includes parts, that lie outside the physical domain $Q^n$. }
For the ghost penalty part, we have with Lemma~\ref{lem.ghost} and the $H^{m+1}$-stability of the extension \eqref{extension}
\begin{align*}
 g_h^n(\u^n, \v_h) \leq g_h^n(\u^n, \u^n)^{1/2} g_h^n(\v_h, \v_h)^{1/2} 
 &\leq ch^m \|{\sfrei \u^n}\|_{H^{m+1}(\Omega^n)} g_h^n(\v_h, \v_h)^{1/2}
\end{align*}
Concerning the pressure stabilisation, we note that for $p^n \in H^1(\Omega^n)$ the term $s_h^n(p^n, p^n)$ is not well-defined. 
For this reason we distinguish between the cases $m=1$ and $m\geq 2$. In the first case, we estimate using \eqref{stabass}
and the $H^1$-stability of the interpolation
 \begin{align*}
  s_h^n(i_h^n p^n, q_h) 
  &\leq ch \|i_h^n p^n\|_{H^1(\Omega^n)} s_h^n(q_h, q_h)^{1/2}
  \leq ch \|p^n\|_{H^1(\Omega^n)} s_h^n(q_h, q_h)^{1/2}.
 \end{align*}
For $m\geq 2$, we insert $\pm p^n$ and use \eqref{stabass} and the interpolation error estimate \eqref{interpolTrace}
\begin{align*}
 s_h^n(i_h^n p^n, q_h) 
 &\leq \left(s_h^n(\eta_p^n, \eta_p^n)^{1/2} + s_h^n(p^n, p^n)^{1/2}
 \right) s_h^n(q_h, q_h)^{1/2} 
 \leq ch^m \|{\sfrei  p^n}\|_{H^m(\Omega^n)} s_h^n(q_h, q_h)^{1/2}.
\end{align*}
\end{proof}

\begin{lemma}{(Interpolation error)}\label{lem.interpol}
Let $\u\in L^{\infty}(I_n, H^{m+1}(\Omega(t))^d), \partial_t \u \in L^{\infty}(I_n, H^{m}(\Omega(t))^d), p \in L^{\infty}(I_n,H^{m}(\Omega(t))$. Under 
the assumptions made in Section~\ref{sec.disc}, {\sfrei including Assumption~\ref{ass.T}}, it holds for $\v_h\in{\cal V}_h^n$ and $q_h\in {\cal L}_h^n$ that
\begin{align*}
  |{\cal E}_i^n(\v_h, q_h)| &\leq ch^m \left( \|\u\|_{\infty, m+1,I_n} + \|\partial_t \u\|_{\infty, m,I_n} 
  + \|p\|_{H^{m}(\Omega^n)}\right)  \left(|||\v_h|||_{h,n} + h \|\nabla q_h\|_{\Omega^n}\right).
 \end{align*}
\end{lemma}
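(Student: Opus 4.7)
The plan is to bound the three pieces of ${\cal E}_i^n(\v_h,q_h)$ separately: the discrete time-derivative $(D_t^{(1)}\etab_u^n, \v_h)_{\Omega^n}$, the Stokes plus Nitsche form $({\cal A}_S^n + a_D^n)(\etab_u^n,\eta_p^n;\v_h,q_h)$, and the ghost penalty $\gamma_g g_h^n(\etab_u^n,\v_h)$.

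For the first piece, I would introduce a time-continuous interpolation error $\etab_u(t) := E\u(t) - I_h^n E\u(t)$ for $t \in I_n$, using a smooth global extension $E$ and the single triangulation ${\cal T}_{h,\delta}^n$ at both time levels, so that
\begin{align*}
\etab_u^n - \etab_u^{n-1} = (\mathrm{id} - I_h^n)\int_{t_{n-1}}^{t_n} \partial_t E\u(t)\,dt.
\end{align*}
Combining the standard $L^2$-interpolation estimate on ${\cal T}_{h,\delta}^n$ with \eqref{Extdt} then yields
\begin{align*}
\|D_t^{(1)}\etab_u^n\|_{\Omega^n} \leq \frac{ch^m}{\Delta t}\int_{t_{n-1}}^{t_n}\|\partial_t E\u(t)\|_{H^m(\Omega_\delta^n)}\,dt \leq ch^m\bigl(\|\u\|_{\infty,m+1,I_n} + \|\partial_t \u\|_{\infty,m,I_n}\bigr).
\end{align*}
A Cauchy--Schwarz inequality together with the Poincar\'e-type bound $\|\v_h\|_{\Omega^n} \leq c|||\v_h|||_{h,n}$, itself a consequence of $\|\v_h\|_{\Omega^n}^2 \leq c(\|\nabla\v_h\|_{\Omega^n}^2 + \|\v_h\|_{\partial\Omega^n}^2)$ and the definition of the triple norm, then gives the required contribution.

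The Stokes--Nitsche piece is the heart of the argument. A direct application of Lemma~\ref{lem.contCont} would force a full $L^2(\Omega^n)$-norm of $q_h$ on the right, which the triple norm does not control; only $h\|\nabla q_h\|_{\Omega^n}$ is available. The way out is to expand $({\cal A}_S^n + a_D^n)(\etab_u^n,\eta_p^n;\v_h,q_h)$ into its individual terms and combine the two contributions containing $q_h$ via integration by parts:
\begin{align*}
({\rm div}\,\etab_u^n, q_h)_{\Omega^n} - (\etab_u^n\cdot\n,q_h)_{\partial\Omega^n} = -(\etab_u^n,\nabla q_h)_{\Omega^n} \leq ch^{m+1}\|\u^n\|_{H^{m+1}(\Omega^n)}\,\|\nabla q_h\|_{\Omega^n},
\end{align*}
which produces exactly the factor $h\|\nabla q_h\|_{\Omega^n}$ appearing in the target estimate. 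The remaining terms $(\nabla\etab_u^n,\nabla\v_h)_{\Omega^n}$, $-(\eta_p^n,{\rm div}\,\v_h)_{\Omega^n}$, the consistency and symmetrisation Nitsche terms on $\partial\Omega^n$, and the penalty $\tfrac{\gamma_D}{h}(\etab_u^n,\v_h)_{\partial\Omega^n}$ are each handled by Cauchy--Schwarz, the interpolation estimates \eqref{interpol1}--\eqref{interpolTrace}, a local trace inequality for $\partial_n\etab_u^n$ and $\eta_p^n$ on $\partial\Omega^n$, and the inverse inequality $h^{1/2}\|\partial_n\v_h\|_{\partial\Omega^n} \leq c\|\nabla\v_h\|_{\Omega_h^n}$; the passage from $\Omega_h^n$ back to $|||\v_h|||_{h,n}$ is provided by Lemma~\ref{lem.ghost}.

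For the ghost penalty contribution I would apply Cauchy--Schwarz in $g_h^n$, then bound $g_h^n(\etab_u^n,\etab_u^n)^{1/2} \leq ch^m\|\u^n\|_{H^{m+1}(\Omega^n)}$ using element-wise trace-interpolation estimates together with the $H^{m+1}$-stability \eqref{extension} of the extension, while $g_h^n(\v_h,\v_h)^{1/2} \leq |||\v_h|||_{h,n}$ follows from the definition of the triple norm. The main obstacle is not any single estimate but rather the careful reorganisation of the Stokes--Nitsche terms so that the pressure test function $q_h$ enters only through $h\|\nabla q_h\|_{\Omega^n}$; once this integration-by-parts cancellation is identified, the rest is standard interpolation and trace bookkeeping, with some additional care needed for the extension operators in the time-derivative identity.
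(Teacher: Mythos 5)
Your proposal is correct and follows essentially the same route as the paper: the same term-by-term splitting, the same commutation-of-interpolation-and-time-derivative argument combined with \eqref{Extdt} for the $D_t^{(1)}\etab_u^n$ term, and the same Cauchy--Schwarz plus weak-consistency bound for the ghost penalty. The only cosmetic difference is the Stokes--Nitsche part: the paper gets the factor $h\|\nabla q_h\|_{\Omega^n}$ by applying Lemma~\ref{lem.contCont} with $(\v_h,q_h)$ placed in the first argument slot (the lemma is stated for both orderings of the arguments precisely for this purpose), which encapsulates exactly the integration-by-parts cancellation $({\rm div}\,\etab_u^n,q_h)_{\Omega^n}-(\etab_u^n\cdot\n,q_h)_{\partial\Omega^n}=-(\etab_u^n,\nabla q_h)_{\Omega^n}$ that you carry out by hand, so your concern that a direct use of that lemma would force $\|q_h\|_{\Omega^n}$ into the bound is unfounded.
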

\begin{proof}
 We estimate the interpolation error \eqref{interpolError} term by term. For the first term we use that we can exchange time derivative and 
 interpolation operator $\partial_t I_h \u^n = I_h \partial_t \u (t_n)$
 \begin{eqnarray}
 \begin{aligned}\label{interpoldteta}
  \big|\frac{1}{\Delta t} (\etab_u^n - \etab_u^{n-1}, \v_h)_{\Omega^n}\big| \leq \frac{1}{\Delta t} \|\etab_u^n - \etab_u^{n-1}\|_{\Omega^n} \|\v_h\|_{\Omega^n} 
  &= \frac{1}{\Delta t} \Big\| \int_{t_{n-1}}^{t_n} \partial_t (\u(t) - I_h \u(t)) \, d\text{t} \Big\|_{\Omega^n} \|\v_h\|_{\Omega^n}\\
  &\leq h^m \|\partial_t (E^n \u)\|_{\infty,m,I_n} \|\v_h\|_{\Omega^n}. 
 \end{aligned}
 \end{eqnarray}
 We note again that the integration domain in the first norm on the right-hand side includes parts, that might lie outside the physical domain $Q^n$. 
 By means of \eqref{Extdt} we conclude
 \begin{align*}
  \Big| \frac{1}{\Delta t} (\etab_u^n - \etab_u^{n-1}, \v_h)_{\Omega^n}\Big| \leq ch^m \left( \|\partial_t \u\|_{\infty,m} + \|\u\|_{\infty, m+1} \right) \|\v_h\|_{\Omega^n}
 \end{align*}
 
 For the second term in \eqref{interpolError}, we use Lemma~\ref{lem.contCont}
 \begin{eqnarray}
 \begin{aligned}\label{A4interpol}
 ({\cal A}_S^n &+ a_D^n)(\etab_u^n,\eta_p^n;\v_h,q_h)\\ 
 &\leq c \left( \|\nabla \etab_u^n\|_{\Omega^n} + h^{-1} \|\etab_u^n\|_{\Omega^n} + h^{-1/2} \|\etab_u^n\|_{\partial\Omega^n} + h^{1/2} \left(\|\etab_u^n\|_{\partial\Omega^n} +\|\eta_p^n\|_{\partial\Omega^n} \right)  +  \|\eta_p^n\|_{\Omega^n}\right)\\
 &\qquad\qquad\cdot \left(\|\nabla \v_h\|_{\Omega^n} +h^{-1/2} \|\v_h\|_{\partial\Omega^n} + h^{1/2}\|\partial_n \v_h\|_{\partial\Omega^n} + h\|\nabla q_h\|_{\Omega^n} \right) \\
 &\leq ch^m \left(\|{\sfrei \u^n}\|_{H^{m+1}(\Omega^n)} + \|{\sfrei p^n}\|_{H^m(\Omega^n)} \right) \left( |||\v_h|||_{h,n} + h\|\nabla q_h\|_{\Omega^n} \right) 
 \end{aligned}
 \end{eqnarray}
 Finally, we get for the ghost penalty part from \eqref{ghbound} and the $H^{m+1}$-stability of the extension
 \begin{align*}
  g_h^n(\etab_u^n, \v_h) \leq ch^m \|{\sfrei \u^n}\|_{H^{m+1}(\Omega_\delta^n)} g_h^n(\v_h, \v_h)^{1/2} \leq ch^m \|{\sfrei \u^n}\|_{H^{m+1}(\Omega^n)} g_h^n(\v_h, \v_h)^{1/2}.
 \end{align*}
\end{proof}

Now, we are ready to show an error estimate for the velocities.

\begin{theorem}\label{theo.energyerror}
 Let $\u_h = (\u_h^k)_{k=1}^n, p_h = (p_h^k)_{k=1}^n$ be the discrete solution of \eqref{DiscSystem} for $s=1$ and $(\u,p)$ the continuous solution of \eqref{Stokes}. Further, let  
 $\gamma_g \geq {\sfrei c_2(w_{\max})} K$ with ${\sfrei c_2(w_{\max})}$ defined in Lemma~\ref{lem.domainExt},
 $\gamma_D, \gamma_p$ sufficiently large and $\Delta t\geq ch^2$ for some $c>0$. 
 Under the assumptions stated in Section~\ref{sec.disc}, {\sfrei including Assumption~\ref{ass.T}}, it holds for the error $\e_u^k=\u^k-\u_h^k, e_p^k=p^k-p_h^k$ for $n\geq 1$
 \begin{align*}
 \|\e_u^n\|_{\Omega^n}^2 &+\sum_{k=1}^n \left\{ \|\e_u^k - \e_u^{k-1}\|_{\Omega^k}^2 + \Delta t \left( ||| \e_u^k|||_{h,k}^2 + h^2\|\nabla e_p^k\|_{\Omega^k}^2\right)\right\}\\
 &\leq c\exp({\sfrei c_1(w_{\max})} t_n) \Big( \Delta t^2 \|\partial_t^2 \u\|_{Q}^2 +h^{2m} \left( \|\u\|_{\infty,m+1}^2 + \|\partial_t \u\|_{\infty,m}^2
  + \|p\|_{\infty,m}^2\right) \Big),
\end{align*}
where $\e_u^0:=0$ and {\sfrei ${\sfrei c_1(w_{\max})}$ is defined in Lemma~\ref{lem.domainExt}.}
\end{theorem}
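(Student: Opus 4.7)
The plan is to split the error as $\e_u^n = \etab_u^n + \xib_{h,u}^n$ and $e_p^n = \eta_p^n + \xi_{h,p}^n$ and to control the interpolation parts directly via \eqref{interpol1}--\eqref{interpolTrace} together with the extension stabilities \eqref{extension}--\eqref{Extdt2}. The real work is to bound the discrete error $(\xib_{h,u}^n,\xi_{h,p}^n)$, which satisfies the perturbed Galerkin equation \eqref{Galerkin2} with right-hand side $-{\cal E}_i^n + {\cal E}_c^n$. I would therefore replay the stability argument of Theorem~\ref{theo.stab} for this perturbed system: test with $\v_h = 2\Delta t\,\xib_{h,u}^n$, $q_h = 2\Delta t\,\xi_{h,p}^n$, apply the coercivity estimate \eqref{coerc}, use the telescoping identity \eqref{telescope}, and transport $\|\xib_{h,u}^{n-1}\|_{\Omega^n}^2$ back to $\Omega^{n-1}$ by Lemma~\ref{lem.domainExt}. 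This produces
\[
  \|\xib_{h,u}^n\|_{\Omega^n}^2 + \|\xib_{h,u}^n-\xib_{h,u}^{n-1}\|_{\Omega^n}^2 + \Delta t\bigl(|||\xib_{h,u}^n|||_{h,n}^2 + \gamma_p s_h^n(\xi_{h,p}^n,\xi_{h,p}^n)\bigr) \leq (1 + c_1\Delta t)\|\xib_{h,u}^{n-1}\|_{\Omega^{n-1}}^2 + 2\Delta t\,\bigl|({\cal E}_c^n - {\cal E}_i^n)(\xib_{h,u}^n,\xi_{h,p}^n)\bigr|,
\]
up to the ghost-penalty absorption handled exactly as in Theorem~\ref{theo.stab} for $\gamma_g\geq c_2(w_{\max}){\cal K}$.

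Next I would bound the right-hand side by Lemmas~\ref{lem.consistency} and \ref{lem.interpol}, using a discrete Poincaré inequality $\|\v_h\|_{\Omega^n}\leq c|||\v_h|||_{h,n}$ to turn the $L^2$ factor from the consistency error into the triple norm, and Young's inequality to absorb $\Delta t|||\xib_{h,u}^n|||_{h,n}^2$ and $\Delta t\,s_h^n(\xi_{h,p}^n,\xi_{h,p}^n)$ into the left-hand side. The awkward contribution is the term $h\|\nabla q_h\|_{\Omega^n}$ appearing in the interpolation error bound, since the coercivity of ${\cal A}_h^n$ does not control $h\|\nabla\xi_{h,p}^n\|_{\Omega^n}$. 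To handle it I would re-run the proof of Lemma~\ref{lem.pressstab} on the Galerkin orthogonality \eqref{Galerkin2} rather than on the discrete momentum equation; this yields the bound
\[
  h^2\|\nabla \xi_{h,p}^n\|_{\Omega^n}^2 \leq c\bigl(h^2\|D_t^{(1)}\xib_{h,u}^n\|_{\Omega^n}^2 + |||\xib_{h,u}^n|||_{h,n}^2 + s_h^n(\xi_{h,p}^n,\xi_{h,p}^n)\bigr) + c\bigl(\|{\cal E}_c^n\|_*^2 + \|{\cal E}_i^n\|_*^2\bigr),
\]
where $\|\cdot\|_*$ is the relevant dual seminorm and the right-hand side data terms are estimated again by Lemmas~\ref{lem.consistency}--\ref{lem.interpol}. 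The assumption $\Delta t\geq ch^2$ is used here to convert $\Delta t\,h^2\|D_t^{(1)}\xib_{h,u}^n\|_{\Omega^n}^2$ into $\|\xib_{h,u}^n-\xib_{h,u}^{n-1}\|_{\Omega^n}^2$, which was already produced by the stability step.

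I would then add a small multiple of this pressure gradient inequality to the energy inequality, reabsorb the small $|||\xib_{h,u}^n|||_{h,n}^2$ and $s_h^n(\xi_{h,p}^n,\xi_{h,p}^n)$ contributions, sum over $k=1,\dots,n$, and apply the discrete Gronwall lemma exactly as at the end of Theorem~\ref{theo.stab}. Using $\etab_u^0 = \xib_{h,u}^0 = 0$ eliminates initial data contributions, and the standard interpolation bounds \eqref{interpol1}--\eqref{interpol2} combined with the triangle inequality transfer the estimate from $\xib_{h,u}^n,\xi_{h,p}^n$ back to $\e_u^n, e_p^n$, producing the stated $\mathcal{O}(\Delta t + h^m)$ bound with the factor $\exp(c_1(w_{\max})t_n)$.

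The main obstacle is the $h\|\nabla q_h\|$ term in the interpolation error estimate, which forces the auxiliary pressure gradient bound; tracking all constants so that the discrete time derivative piece $\Delta t\,h^2\|D_t^{(1)}\xib_{h,u}^n\|^2$ is controlled (which is precisely what the CFL-like hypothesis $\Delta t\geq c h^2$ is for) and that everything is absorbable into the stability energy is the delicate bookkeeping step. A secondary, but purely notational, difficulty is keeping track of where quantities live (on $\Omega^n$, $\Omega^{n-1}$, or $\Omega_\delta^n$) so that Lemma~\ref{lem.domainExt} is invoked with the correct ghost-penalty absorption constants $c_1(w_{\max})$ and $c_2(w_{\max}){\cal K}$.
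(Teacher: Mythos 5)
Your proposal follows essentially the same route as the paper's proof: the same splitting into interpolation and discrete error parts, replaying the stability argument of Theorem~\ref{theo.stab} on the perturbed Galerkin relation \eqref{Galerkin2}, invoking Lemmas~\ref{lem.consistency} and~\ref{lem.interpol}, and controlling $h\|\nabla \xi_{h,p}^n\|_{\Omega^n}$ by re-running the argument of Lemma~\ref{lem.pressstab} on the orthogonality relation, with the inverse CFL condition $\Delta t\geq ch^2$ used exactly as you describe to absorb the discrete time-derivative term, followed by summation, discrete Gronwall, and the triangle inequality with the interpolation estimates. The argument is correct and matches the paper's proof in all essential steps.
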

\begin{proof}
As in the stability proof (Theorem~\ref{theo.stab}, \eqref{StabFirstEstimate}), we obtain from \eqref{Galerkin2} for $n\geq 1$
  \begin{eqnarray}
 \begin{aligned}\label{startenergy}
  \|\xib_{h,u}^n&\|_{\Omega^n}^2 +\|\xib_{h,u}^n-\xib_{h,u}^{n-1}\|_{\Omega^n}^2 +\Delta t \left(|||\xib_{h,u}^n|||_{h,n}^2
+ \gamma_p s_h(\xi_{h,p}^n, \xi_{h,p}^n) \right)\\
 &\leq \left(1 + {\sfrei c_1(w_{\max})} \Delta t \right)  \|\xib_{h,u}^{n-1}\|_{\Omega^{n-1}}^2 
+ \frac{ \Delta t}{2} \|\nabla \xib_{h,u}^{n-1}\|_{\Omega^{n-1}}^2
 +  \Delta t \gamma_g g_h^{n-1}(\xib_{h,u}^{n-1}, \xib_{h,u}^{n-1})\\
 &\qquad\qquad\qquad+ 2 \Delta t \left( \big| {\cal E}_c^n(\xib_{h,u}^n, \xi_{h,p}^n)\big| + \big| {\cal E}_i^n(\xib_{h,u}^n, \xi_{h,p}^n)\big| \right) .
  \end{aligned}
\end{eqnarray}
for $\gamma_g \geq {\sfrei c_2(w_{\max})} {\cal K}$. {\sfrei A bound for $\|\nabla \xi_{h,p}^n\|_{\Omega^n}$ can be obtained from \eqref{Galerkin} as in the proof of Lemma~\ref{lem.pressstab} (compare \eqref{pressH1stab})
\begin{eqnarray}
\begin{aligned}\label{xihpn}
\Delta t h^2\|\nabla \xi_{h,p}^n\|_{\Omega^n}^2 &\leq c \Delta t \Big(h^2 \|D_t^{(1)} \xib_{h,u}^n\|_{\Omega^n}^2 + |||\xib_{h,u}^n|||_{h,n}^2 + s_h^n(\xi_{h,p}^n,\xi_{h,p}^n) \\
 &\qquad+ \big| {\cal E}_c^n(\xib_{h,u}^n, \xi_{h,p}^n)\big| + \big| {\cal E}_i^n(\xib_{h,u}^n, \xi_{h,p}^n)\big| \Big).
\end{aligned}
\end{eqnarray}
We multiply \eqref{xihpn} by $\epsilon>0$ and add it to \eqref{startenergy}. 
Due to the assumption $\Delta t \geq ch^2$ the first three terms on the right-hand side of \eqref{xihpn} can be absorbed into the left-hand side of \eqref{startenergy} for sufficiently small $\epsilon$
\begin{eqnarray}
 \begin{aligned}
  \|\xib_{h,u}^n&\|_{\Omega^n}^2 +\frac{3}{4}\|\xib_{h,u}^n-\xib_{h,u}^{n-1}\|_{\Omega^n}^2 +\frac{3\Delta t}{4} \left(|||\xib_{h,u}^n|||_{h,n}^2
+ \gamma_p s_h(\xi_{h,p}^n, \xi_{h,p}^n) 
+ \epsilon h^2\|\nabla \xi_{h,p}^n\|_{\Omega^n}^2
\right)\\
 &\leq \left(1 + {\sfrei c_1(w_{\max})} \Delta t \right)  \|\xib_{h,u}^{n-1}\|_{\Omega^{n-1}}^2 
+ \frac{ \Delta t}{2} \|\nabla \xib_{h,u}^{n-1}\|_{\Omega^{n-1}}^2
 +  \Delta t \gamma_g g_h^{n-1}(\xib_{h,u}^{n-1}, \xib_{h,u}^{n-1})\\
 &\qquad\qquad\qquad+ 2 \Delta t \left( \big| {\cal E}_c^n(\xib_{h,u}^n, \xi_{h,p}^n)\big| + \big| {\cal E}_i^n(\xib_{h,u}^n, \xi_{h,p}^n)\big| \right) .
  \end{aligned}
\end{eqnarray}
}
Next, we use Lemmata~\ref{lem.consistency} and~\ref{lem.interpol} in combination with Young's inequality to estimate ${\cal E}_c^n$ and ${\cal E}_i^n$
  \begin{eqnarray}
 \begin{aligned}\label{energyonestep}
  (1-\Delta &t) \|\xib_{h,u}^n\|_{\Omega^n}^2 + \frac{1}{2}\|\xib_{h,u}^n-\xib_{h,u}^{n-1}\|_{\Omega^n}^2 +\frac{\Delta t}{2}\left( |||\xib_{h,u}^n|||_{h,n}
 + \gamma_p s_h(\xi_{h,p}^n, \xi_{h,p}^n) + \epsilon h^2\|\nabla \xi_{h,p}^n\|_{\Omega^n}^2\right)  \\
 &\leq \left(1 + {\sfrei c_1(w_{\max})} \Delta t \right)  \|\xib_{h,u}^{n-1}\|_{\Omega^{n-1}}^2 
  + \frac{ \Delta t}{2} \|\nabla \xib_{h,u}^{n-1}\|_{\Omega^{n-1}}^2
 +  \Delta t \gamma_g g_h^{n-1}(\xib_{h,u}^{n-1}, \xib_{h,u}^{n-1})\\
 &\qquad\;\;+c \Delta t \Big( \Delta t \|\partial_t^2 \u\|_{Q^n}^2
 +h^{2m} \left( \|\u\|_{\infty,m+1,I_n}^2 + \|\partial_t \u\|_{\infty,m,I_n}^2
  + \|p\|_{H^{m}(\Omega^n)}^2\right) \Big).
  \end{aligned}
\end{eqnarray}
We sum over $k=1,\dots,n$ and apply a discrete Gronwall lemma to find
\begin{eqnarray}
\begin{aligned}\label{AfterGronwall}
 \|\xib_{h,u}^n\|_{\Omega^n}^2 &+\sum_{k=1}^n \left( \|\xib_{h,u}^k - \xib_{h,u}^{k-1}\|_{\Omega^k} + \Delta t\left( ||| \xib_{h,u}^k|||_{h,k}^2 + \gamma_p s_h^k(\xi_{h,p}^k, \xi_{h,p}^k) + h^2\|\nabla \xi_{h,p}^k\|_{\Omega^k}^2\right)\right)\\
 &\leq c\exp({\sfrei c_1(w_{\max})} t_n) \Big( \Delta t^2 \|\partial_t^2 \u\|_{Q}^2
 +h^{2m} \left( \|\u\|_{\infty,m+1}^2 + \|\partial_t \u\|_{\infty,m}^2
  + \|p\|_{\infty,m}^2\right) \Big).
\end{aligned}
\end{eqnarray}

Finally, the interpolation estimates \eqref{interpol1}-\eqref{interpolTrace} and the argumentation used in \eqref{interpoldteta} yield
\begin{eqnarray}
\begin{aligned}
 \|\etab_u^n\|_{\Omega^n}^2 + \sum_{k=1}^n &\left(\|\etab_u^k - \etab_u^{k-1}\|_{\Omega^k} + \Delta t\left( ||| \etab_u^k|||_{h,k}^2 + h^2\|\nabla \eta_p^k\|_{\Omega^k}^2\right)\right)\\
&\leq ch^{2m} \left( \|\u\|_{\infty,m+1}^2 + \|\partial_t \u\|_{\infty,m}^2
  + \|p\|_{\infty,m}^2\right).
 \end{aligned}\label{interpol4error}
 \end{eqnarray}
{\sfrei Addition of \eqref{AfterGronwall} and \eqref{interpol4error} proves the statement.}
\end{proof}


\begin{remark}{(Optimality)}
 The energy norm estimate is optimal 
 under the inverse CFL condition $\Delta t \geq ch^2$. This condition is needed to control 
 the pressure error $h\|\nabla \xi_{h,p}^n\|_{\Omega^n}$ using Lemma~\ref{lem.pressstab}, see Corollary~\ref{cor.press}.
{\sfrei If the Brezzi-Pitk\"aranta stabilisation would be used instead of the CIP pressure stabilisation, this term would be controlled by the pressure stabilisation in Theorem~\ref{theo.stab}, as 
$h\|\nabla \xi_{h,p}^n\|_{\Omega^n} = s_h^n(\xi_{h,p}^n,\xi_{h,p}^n)^{1/2}$. Hence, an unconditional error estimate of first order in space would result.} 
\end{remark}

\begin{remark}{(BDF(2))}\label{rem.energys}
 For $s=2$ we obtain a similar result under the stronger condition $\Delta t \geq ch$ 
 This is needed 
 to get control over $h\|\nabla \xi_{h,p}^k\|_{\Omega^k}$, see Corollary~\ref{cor.press} \eqref{stabslarger1}. Under this assumption, we can show the following
 result for $n\geq 2$:
 \begin{align*}
 \|\e_u^n\|_{\Omega^n}^2 &+\Delta t \sum_{k=1}^n \left( ||| \e_u^k|||_{h,k}^2 + h^2\|\nabla e_p^k\|_{\Omega^k}^2\right)\\
 &\leq c\exp({\sfrei c_1(w_{\max})} t_n) \Big( {\sfrei \Delta t^{4} \|\u\|_{H^3(I,L^2(\Omega^n))}^2}^2
 +h^{2m} \left( \|\u\|_{\infty,m+1}^2 + \|\partial_t \u\|_{\infty,m}^2
  + \|p\|_{\infty,m}^2\right) \Big)\\
  &\qquad\qquad{ +c\left(\|\e_u^1\|_{\Omega^1}^2 + \Delta t |||\e_u^1|||_{h,1}^2\right).}
 \end{align*}
 which is of second order in time $\Delta t$, if we 
 assume that the initial error is bounded by
 \begin{eqnarray}
 \begin{aligned}\label{eu1}
  \|\e_u^1\|_{\Omega^1}^2 &+  \Delta t |||\e_u^1|||_{h,1}^2\\ 
  &\leq c\left({\sfrei \Delta t^4 \|\u \|_{H^3(I,L^2(\Omega^n))}^2}
 +h^{2m} \left( \|\u\|_{\infty,m+1,I_1}^2 + \|\partial_t \u\|_{\infty,m,I_1}^2
  + \|p\|_{H^{m}(\Omega^1)}^2\right) \right).
 \end{aligned}
 \end{eqnarray}
 The initialisation will be discussed in the following remark.
 The main modifications in the proof concern the approximation of the time derivative in Lemmas~\ref{lem.consistency} and~\ref{lem.interpol}. In \eqref{consTime1} we estimate
 \begin{align*}
  \left(D_t^{(2)} \u^n -\partial_t \u(t_n), \v_h\right)_{\Omega^n} \leq c {\sfrei \Delta t^{3/2} \|\partial_t^3 \u\|_{Q^n}} \|\v_h\|_{\Omega^n},
 \end{align*}
see \cite{HairerNorsettWanner1991, BurmanFernandez2008}. In order to estimate the analogue of \eqref{interpoldteta}, we use
\begin{align}\label{Dtuformula}
 D_t^{(2)} \u_h^n = \frac{3}{2} D_t^{(1)} \u_h^n + \frac{1}{2} D_t^{(1)} \u_h^{n-1}.
\end{align}
Then the argumentation used in \eqref{interpoldteta} can be applied to both terms on the right-hand side of \eqref{Dtuformula}.
\end{remark}

 {\sfrei
\begin{remark}{(Initialisation of BDF(2))}\label{rem.initBDF2}
To initialise the BDF(2) scheme, the function $\u_h^1$ needs to be computed with sufficient accuracy. The simplest possibility is to use one BDF(1) step by solving 
 \begin{eqnarray*}
 \begin{aligned}
\frac{1}{\Delta t}  &\left(\u_h^1, \v_h^1\right)_{\Omega^k} + {\cal A}_h^1(\u_h^1,p_h^1;\v_h^1,q_h^1) 
  =  \frac{1}{\Delta t}\left(E\u^0, \v_h^1\right)_{\Omega^1} + (\f,\v_h^1)_{\Omega^1} \quad \forall \v_h^1 \in {\cal V}_h^1, q_h^1 \in {\cal L}_h^1 
  \end{aligned}
  \end{eqnarray*}
  for $(\u_h^1, p_h^1) \in ({\cal V}_h^1 \times {\cal L}_{h,0}^1)$.
 Similar to the proof of Theorem~\ref{theo.energyerror}, the error after one BDF(1) step can be estimated by
  \begin{eqnarray*}
   \begin{aligned}
  \|\e_u^1\|_{\Omega^1}^2 +  \Delta t |||\e_u^1|||_{h,1}^2
  &\leq c\left({\sfrei \Delta t^3 \|\partial_t^2 \u \|_{Q^1}^2}
 +\Delta t\, h^{2m} \left( \|\u\|_{\infty,m+1,I_1}^2 + \|\partial_t \u\|_{\infty,m,I_1}^2
  + \|p\|_{H^{m}(\Omega^1)}^2\right) \right)
  \\
  &\leq c\left({\sfrei \Delta t^4 \|\u \|_{H^3(I,L^2(\Omega^n))}^2}
 +\Delta t\, h^{2m} \left( \|\u\|_{\infty,m+1,I_1}^2 + \|\partial_t \u\|_{\infty,m,I_1}^2
  + \|p\|_{H^{m}(\Omega^1)}^2\right) \right),
 \end{aligned}
 \end{eqnarray*}
 where in the last step a Sobolev inequality has been applied in time to show $\|\partial_t^2 \u \|_{Q^1}^2 \leq \Delta t \|\partial_t^2 \u \|_{\infty,0,I_1}^2 \leq c\Delta t \|\u \|_{H^3(I,L^2(\Omega^n))}^2$.

\end{remark}}

\subsubsection{$L^2(L^2)$-norm error of pressure}
\label{sec.L2L2press}

The energy estimate in Theorem~\ref{theo.energyerror} includes an optimal bound for the $H^1$-norm of the pressure. To show an optimal bound in the $L^2$-norm seems to be non-trivial, due to the fact
that $\u_h^{n-1}$ is not discrete divergence-free with respect to $\Omega^n$ and ${\cal V}_h^n$, see the discussion in Section~\ref{sec.pressstab}. 
We show here only a sub-optimal bound for $s=1$.
An optimal estimate is subject to future work.

\begin{lemma}\label{cor.pressest}
Under the assumptions of Theorem~\ref{theo.energyerror} it holds for $s=1$ 
 \begin{align*}
 \left(\Delta t \sum_{k=1}^n \|e_p^k\|_{\Omega^k}^2\right)^{1/2} 
 &\leq c\exp({\sfrei c_1(w_{\max})} t_n) \Big( \Delta t^{1/2} \|\partial_t^2 \u\|_{Q} +\frac{h^{m}}{\Delta t^{1/2}} \left( \|\u\|_{\infty,m+1} + \|\partial_t \u\|_{\infty,m}
  + \|p\|_{\infty,m}\right) \Big),
\end{align*}
where $\e_u^0:=0$. 
\end{lemma}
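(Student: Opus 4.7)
The plan is to split the pressure error as $e_p^k = \eta_p^k + \xi_{h,p}^k$. The interpolation part $\eta_p^k$ is bounded directly by \eqref{interpol2}, which contributes an unamplified $h^m\|p\|_{H^m(\Omega^k)}$ term that is absorbed into the final right-hand side. The heart of the proof is to estimate $\|\xi_{h,p}^k\|_{\Omega^k}$ via the modified inf-sup condition of Lemma~\ref{lem.modinfsup}. The key observation is that the pressure terms $-(\xi_{h,p}^k, \div \v_h)_{\Omega^k} + (\xi_{h,p}^k \n, \v_h)_{\partial\Omega^k}$ appearing inside $({\cal A}_S^k + a_D^k)(\xib_{h,u}^k, \xi_{h,p}^k; \v_h, 0)$ are exactly those entering the numerator of the inf-sup supremum.

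Testing the Galerkin orthogonality \eqref{Galerkin2} with $q_h = 0$ and isolating these pressure terms yields a representation of the form
\[
(\div\v_h, \xi_{h,p}^k)_{\Omega^k} - (\v_h\cdot\n, \xi_{h,p}^k)_{\partial\Omega^k} = -(D_t^{(1)}\xib_{h,u}^k, \v_h)_{\Omega^k} - R^k(\xib_{h,u}^k; \v_h) + {\cal E}_c^k(\v_h, 0) - {\cal E}_i^k(\v_h, 0),
\]
where $R^k$ collects the velocity-only contributions from ${\cal A}_S^k + a_D^k$ and the ghost penalty $\gamma_g g_h^k(\xib_{h,u}^k,\v_h)$. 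Using Lemma~\ref{lem.CoercCont} to bound $R^k$ by $|||\xib_{h,u}^k|||_{h,k}\cdot|||\v_h|||_{h,k}$, a Poincaré-type bound $\|\v_h\|_{\Omega^k}\leq c|||\v_h|||_{h,k}$ (as in the proof of Lemma~\ref{lem.pressstab}), and Lemmas~\ref{lem.consistency} and \ref{lem.interpol} for the errors, together with the $h\|\nabla \xi_{h,p}^k\|_{\Omega^k}$ term from Lemma~\ref{lem.modinfsup}, I obtain
\[
\|\xi_{h,p}^k\|_{\Omega^k} \leq c\left(\|D_t^{(1)}\xib_{h,u}^k\|_{\Omega^k} + |||\xib_{h,u}^k|||_{h,k} + h\|\nabla\xi_{h,p}^k\|_{\Omega^k} + \mathcal{R}_c^k + \mathcal{R}_i^k\right),
\]
where $\mathcal{R}_c^k$ and $\mathcal{R}_i^k$ denote the coefficients in front of $|||\v_h|||_{h,k}$ from Lemmas~\ref{lem.consistency} and \ref{lem.interpol}.

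Squaring, multiplying by $\Delta t$, summing over $k=1,\dots,n$, and taking the square root then uses the identity
\[
\Delta t \sum_{k=1}^n \|D_t^{(1)}\xib_{h,u}^k\|_{\Omega^k}^2 = \frac{1}{\Delta t}\sum_{k=1}^n \|\xib_{h,u}^k - \xib_{h,u}^{k-1}\|_{\Omega^k}^2,
\]
together with Theorem~\ref{theo.energyerror}, which controls the right-hand side (without the extra factor of $1/\Delta t$) by $c\exp({\sfrei c_1(w_{\max})}t_n)(\Delta t^2\|\partial_t^2\u\|_Q^2 + h^{2m}(\cdots))$. The remaining contributions $\Delta t\sum|||\xib_{h,u}^k|||_{h,k}^2$ and $\Delta t\sum h^2\|\nabla\xi_{h,p}^k\|_{\Omega^k}^2$ are directly bounded by the same theorem, while the consistency and interpolation residuals, squared and summed, yield the $\Delta t^2\|\partial_t^2\u\|_Q^2$ and $h^{2m}$-type terms.

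The main obstacle is precisely the $\|D_t^{(1)}\xib_{h,u}^k\|_{\Omega^k}$ term: Theorem~\ref{theo.energyerror} gives a bound on $\sum_k\|\xib_{h,u}^k-\xib_{h,u}^{k-1}\|^2$, but moving an extra factor of $\Delta t$ into this sum to form $\Delta t^3\|D_t^{(1)}\xib_{h,u}^k\|^2$ is not available, and division by $\Delta t$ (rather than $\Delta t^2$) after summation is the best one can do without additional structure. After taking the square root this produces the characteristic $h^m/\Delta t^{1/2}$ factor, which is exactly the announced source of sub-optimality. Obtaining an optimal $L^2(L^2)$-bound would require controlling $\|D_t^{(1)}\xib_{h,u}^k\|$ by a quantity that closes without this loss, which, as noted in the introduction and in Section~\ref{sec.pressstab}, is obstructed by the fact that $\u_h^{k-1}$ is not discretely divergence-free with respect to $\Omega^k$ and hence is left for future work.
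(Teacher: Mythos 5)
Your proposal is correct and follows essentially the same route as the paper: the modified inf-sup condition of Lemma~\ref{lem.modinfsup}, Galerkin orthogonality to rewrite the inf-sup numerator, the bounds of Lemmata~\ref{lem.consistency} and~\ref{lem.interpol}, and finally Theorem~\ref{theo.energyerror}, with the same identification of the discrete time-derivative term as the source of the $\Delta t^{-1/2}$ loss. The only (cosmetic) difference is that you stay with the discrete parts $\xib_{h,u}^k,\xi_{h,p}^k$ via \eqref{Galerkin2}, whereas the paper passes to the full errors $\e_u^k,e_p^k$ via \eqref{Galerkin} and absorbs the interpolation parts separately; both bookkeepings lead to the same estimate.
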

\begin{proof}
 We use the modified inf-sup condition for the discrete part $\xi_{h,p}^n = i_h^n p^n - p_h^n$ and standard interpolation estimates
 \begin{eqnarray}
  \begin{aligned}\label{applinfsup}
  \beta \|\xi_{h,p}^n\|_{\Omega^n} &\leq \sup_{\v_h^n \in {\cal V}_h^n} \frac{({\rm div}\, \v_h^n, \xi_{h,p}^n)_{\Omega^n} - (\v_h^n \cdot n, \xi_{h,p}^n)_{\partial\Omega^n}}{|||\v_h^n|||_{h,n}} + h\|\nabla \xi_{h,p}^n\|_{\Omega^n}\\
  &\leq  \sup_{\v_h^n \in {\cal V}_h^n} \frac{({\rm div}\, \v_h^n, e_p^n)_{\Omega^n} - (\v_h^n \cdot n, e_p^n)_{\partial\Omega^n}}{|||\v_h^n|||_{h,n}} 
  + \sup_{\v_h^n \in {\cal V}_h^n} \frac{({\rm div}\, \v_h^n, \eta_p^n)_{\Omega^n} - (\v_h^n \cdot n, \eta_p^n)_{\partial\Omega^n}}{|||\v_h^n|||_{h,n}}\\
  &\qquad\quad+ h\left( \|\nabla e_p^n\|_{\Omega^n} + \|\nabla \eta_{p}^n\|_{\Omega^n}\right)\\
  &\leq  \sup_{\v_h^n \in {\cal V}_h^n} \frac{({\rm div}\, \v_h^n, e_p^n)_{\Omega^n} - (\v_h^n \cdot n, e_p^n)_{\partial\Omega^n}}{|||\v_h^n|||_{h,n}} +  h\|\nabla e_p^n\|_{\Omega^n} + ch^{m} \|p^n\|_{H^{m}(\Omega^n)}.
 \end{aligned}
 \end{eqnarray}
 The second term on the right-hand side is bounded by the energy estimate. For the first term, we use Galerkin orthogonality~\eqref{Galerkin}, {\sfrei followed by Cauchy-Schwarz and Poincar\'e inequalities}
 \begin{align*}
 ({\rm div}\, \v_h^n, e_p^n)_{\Omega^n} - (\v_h^n \cdot \n, e_p^n)_{\partial\Omega^n}
  &=-(D_t^{(1)} \e_u^n, \v_h)_{\Omega^n}  - ({\cal A}_S^n+a_D^n)(\e_u^n,0;\v_h,0) - \gamma_g g_h^n(\e_u^n, \v_h) \\
 &\qquad\quad+ \gamma_g g_h^n(\u^n, \v_h) + (D_t^{(1)} \u^n -\partial_t \u(t_n), \v_h)_{\Omega^n}\\
 &\hspace{-1.3cm}\leq c\left\{ \|D_t^{(1)} \e_u^n\|_{\Omega^n} + |||\e_u^n|||_{h,n} + h^{m} \|\u^n\|_{H^{m+1}(\Omega^n)} 
 + \Delta t \|\partial_t^2 \u^n\|_{\Omega^n}\right\} |||\v_h|||_{h,n}.
 \end{align*}
 {\sfrei After summation in \eqref{applinfsup}, we obtain}
 \begin{eqnarray}
 \begin{aligned}\label{summeddt}
  \Delta t \sum_{k=1}^n \|\xi_{h,p}^k\|_{\Omega^k}^2 &\leq c \sum_{k=1}^n \Big\{ \frac{1}{\Delta t} \|\e_u^k-\e_u^{k-1}\|_{\Omega^k}^2 + \Delta t \left(|||\e_u^k|||_{h,k}^2 + h^2\|\nabla e_p^k\|_{\Omega^k}^2\right)\\
   &\qquad+ \Delta t h^{2m} \left( \|\u^k\|_{H^{m+1}(\Omega^k)}^2 + \|p^k\|_{H^{m}(\Omega^k)}^2\right)
    + \Delta t^3 \|\partial_t^2 \u^k\|_{\Omega^k}^2\Big\}.
 \end{aligned}
 \end{eqnarray}
{\sfrei Using the standard interpolation estimate
{$\displaystyle
\|\eta_{p}^k\|_{\Omega^k}^2 \leq ch^{2m} \|p^k\|_{H^{m}(\Omega^k)}^2$}, we see that \eqref{summeddt} holds for $\xi_{h,p}^k$ replaced by $e_p^k$.} 
 Finally, Theorem~\ref{theo.energyerror} yields the statement. Unfortunately, the factor $\frac{1}{\Delta t}$ in front of the first term on the right-hand side of \eqref{summeddt} leads to a loss of $\Delta t^{-1/2}$ in the final estimate.
\end{proof}

\begin{remark}{(BDF(2))}
For $s=2$ we can only control $\Delta t^3 \|D_t^{(2)} \e_u^n\|_{\Omega^n}^2 = \frac{\Delta t}{2} \|3 \e_u^n - 4 \e_u^{n-1} +\e_u^{n-2}\|_{\Omega^n}^2$ (compared to $\Delta t^2 \|D_t^{(1)} \e_u^n\|_{\Omega^n}^2$ for 
$s=1$), which leads to a further loss of $\Delta t^{-1}$ in the above estimate:
 \begin{align*}
 \left(\Delta t \sum_{k=1}^n \|e_p^k\|_{\Omega^k}^2\right)^{1/2} 
 &\leq c\exp({\sfrei c_1(w_{\max})} t_n) \Big( \Delta t \|\partial_t^2 \u\|_{Q} +\frac{h^{m}}{\Delta t} \left( \|\u\|_{\infty,m+1} + \|\partial_t \u\|_{\infty,m}
  + \|p\|_{\infty,m}\right) \Big).
\end{align*}
 
\end{remark}

\begin{remark}
The estimate in Lemma~\ref{cor.pressest} is balanced, if we choose $\Delta t \sim h^{m}$, which yields a convergence order of {\sfrei ${\cal O}(\Delta t^{1/2}) = {\cal O}(h^{m/2})$. This means that the convergence order is reduced by ${\cal O}(h^{m/2})$ compared to the situation on a fixed domain $\Omega(t)=\Omega$.} 
For BDF(2) the estimate is balanced for $\Delta t^2 \sim h^{m}$ and we obtain a convergence order of {\sfrei ${\cal O}(\Delta t) = {\cal O}(h^{m/2})$}.
The inverse CFL conditions in Theorem~\ref{theo.energyerror}
and Remark~\ref{rem.energys} are automatically fulfilled for these choices, if $m\geq 2$ or $m=s=1$. 
\end{remark}

\subsection{$L^2(L^2)$-norm error of velocity}
\label{sec.errorL2}

To obtain an optimal bound for the velocity error in the $L^2$-norm, we introduce a dual problem. The argumentation of 
Burman \& Fern\'andez~\cite{BurmanFernandez2008}, that does not require a dual problem, but is based on a Stokes projection $P_h(\u,p)$
of the continuous solution, can not be transferred in a straight-forward way to the case of moving domains, 
as it requires an estimate for the time derivative $\partial_t (u -P_h^u u)$. Time derivative and Stokes projection do, however, not commute in the 
case of moving domains, as $P_h^u \u(t)$ depends on the domain $\Omega(t)$. For this reason an estimate for the time derivative is non-trivial.

We focus again on the case $s=1$ first and remark on how to transfer the argumentation to the case $s>1$ afterwards. {\sfrei The argumentation will be based on 
a semi-discretised (in time) dual problem. Before we introduce the dual problem, let us note that the semi-discretised primal problem is given by: 
\textit{Find $(\u^k, p^k)_{k=1}^{n}$ with $\u^k\in H^1_0(\Omega^k), p^k \in L^2_0(\Omega^k)$ such that}
\begin{eqnarray}
\begin{aligned}
\sum_{k=1}^{n} \big\{ (\u^k-E^{k-1} \u^{k-1}, \psib_u^k)_{\Omega^k} + \Delta t {\cal A}_S^k(\u^k, &p^k; \psib_u^k, \psi_p^k) \big\}
 + (E^0 \u^0, \psib_u^1)_{\Omega^1} 
  \\
 =\left(E^0 \u^0, \psib_u^1\right)_{\Omega^1} +  \Delta t \sum_{k=1}^n \left(\f, \psib_u^k\right)_{\Omega^k}   &\;\;\forall \psib_u^k \in H^1_0(\Omega^k)^d, \,\psi_p^k\in L^2(\Omega^k),\,\,k=1,\dots, n,
\end{aligned}
\end{eqnarray}
{\sfrei where $E^k$ denotes the smooth extension operator to $\Omega_\delta^k$ introduced in Section~\ref{sec.ext}.}
 
The corresponding semi-discretised dual problem, which will be needed in the following, reads:} \textit{Find $(\z_u^k, z_p^k)_{k=1}^{n}$ with $\z_u^k\in H^1_0(\Omega^k), z_p^k \in L^2_0(\Omega^k)$ such that}
\begin{eqnarray}
\begin{aligned}\label{semidiscDual}
 \Delta t \sum_{k=1}^n \left(\e_u^k, \phib_u^k\right)_{\Omega^k} = \sum_{k=1}^{n} &\left\{ (\phib_u^k-{\sfrei E^{k-1} \phib_u^{k-1}}, \z_u^k)_{\Omega^k} + \Delta t {\cal A}_S^k(\phib_u^k, \phi_p^k; \z_u^k, z_p^k) \right\}
 + ({\sfrei E^0} \phib_u^0, \z_u^1)_{\Omega^1} \\
 &\qquad\qquad\qquad\qquad \forall \phib_u^k \in H^1_0(\Omega^k)^d, \,\phi_p^k\in {\sfrei L_0^2(\Omega^k)},\quad k=1,\dots, n.
\end{aligned}
\end{eqnarray}
Note that the Dirichlet conditions are imposed strongly in this formulation and the bilinear form ${\cal A}_S^k$ does not include the Nitsche terms.

We start by showing the well-posedness of the problem \eqref{semidiscDual}.

\begin{lemma}\label{lem.dualEx}
 Let $s=1$, $\e_u^k \in L^2(\Omega^k)$ for $k =1,\dots,n$ and assume Assumption~\ref{ass.T}. The semi-discrete dual problem \eqref{semidiscDual} defines unique solutions $(\z_u^k, z_p^k)_{k=1}^n$ with regularity $\z_u^k\in H^2(\Omega^k), z_p^k\in H^1(\Omega^k)$. Moreover,  
 the following regularity estimates are valid, where $S_\delta^k := \Omega_\delta^k \setminus \Omega^k$
 \begin{align}\label{dualreg1}
  \|\z_u^k\|_{H^2(\Omega^k)} + \|z_p^k\|_{H^1(\Omega^k)} &\leq c  \left(\frac{1}{\Delta t}\left(\|\z_u^{k+1}-\z_u^k\|_{\Omega^{k}} + \|\z_u^{k+1}\|_{S_\delta^{k}}\right) + \|\e_u^k\|_{\Omega^k} \right) 
\quad \text{for } k<n,\\
  \|\z_u^n\|_{H^2(\Omega^n)} + \|z_p^n\|_{H^1(\Omega^n)} &\leq c \left(\frac{1}{\Delta t}\|\z_u^n\|_{\Omega^{n}} + \|\e_u^n\|_{\Omega^n} \right).\label{dualreg2}
 \end{align}
\end{lemma}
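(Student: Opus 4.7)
The plan is to solve \eqref{semidiscDual} by backward induction on $k$, running from $k=n$ down to $k=1$. The key observation is that when we test with $\phib_u^j = 0$ and $\phi_p^j = 0$ for all $j \neq k$ (the role of $\phib_u^0$ cancels out with the separate term $(E^0 \phib_u^0, \z_u^1)_{\Omega^1}$ and is therefore irrelevant), the global equation decouples into a single generalized Stokes problem on $\Omega^k$ whose data involves only $\e_u^k$ and---when $k<n$---the iterate $\z_u^{k+1}$ already computed at the previous backward step.

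For the base case $k=n$ the decoupled problem reads
\begin{equation*}
(\phib_u^n,\z_u^n)_{\Omega^n} + \Delta t\,{\cal A}_S^n(\phib_u^n,\phi_p^n;\z_u^n,z_p^n) = \Delta t\,(\e_u^n,\phib_u^n)_{\Omega^n},
\end{equation*}
which is a Brinkman-type Stokes system in $(\z_u^n,z_p^n)\in H^1_0(\Omega^n)^d \times L^2_0(\Omega^n)$. Existence and uniqueness follow from coercivity of the velocity block together with the continuous inf-sup condition on $\Omega^n$. Under Assumption~\ref{ass.T} the boundary $\partial\Omega^n$ is sufficiently regular for standard Stokes elliptic regularity, so that applying it to the strong form $\z_u^n/\Delta t - \Delta\z_u^n + \nabla z_p^n = \e_u^n$, ${\rm div}\,\z_u^n=0$, $\z_u^n|_{\partial\Omega^n}=0$ and absorbing the mass term into the right-hand side yields \eqref{dualreg2}.

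For the inductive step $k<n$, I would first recast the coupling term as an $L^2$-source. Since $\z_u^{k+1}\in H^1_0(\Omega^{k+1})$, its zero extension to $\Omega_\delta^k$ is admissible, and the pairing $(E^k\phib_u^k,\z_u^{k+1})_{\Omega^{k+1}}=(\phib_u^k,g^k)_{\Omega^k}$ defines $g^k:=(E^k)^*\z_u^{k+1}\in L^2(\Omega^k)^d$ thanks to the $L^2$-stability of $E^k$. The resulting problem has exactly the structure of the base case and is therefore well-posed in $H^1_0(\Omega^k)^d\times L^2_0(\Omega^k)$; Stokes elliptic regularity gives
\begin{equation*}
\|\z_u^k\|_{H^2(\Omega^k)} + \|z_p^k\|_{H^1(\Omega^k)}
\le c\left(\|\e_u^k\|_{\Omega^k} + \tfrac{1}{\Delta t}\|\z_u^k - g^k\|_{\Omega^k}\right).
\end{equation*}

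The main obstacle, and the geometric heart of the argument, is to recover from this the exact splitting appearing in \eqref{dualreg1}. Testing by duality with $w\in L^2(\Omega^k)^d$ of unit norm and exploiting that $E^k w=w$ on $\Omega^k$,
\begin{equation*}
(g^k - \z_u^k, w)_{\Omega^k} = (\z_u^{k+1}, w)_{\Omega^{k+1}\cap\Omega^k} + (\z_u^{k+1}, E^k w)_{\Omega^{k+1}\setminus\Omega^k} - (\z_u^k, w)_{\Omega^k}.
\end{equation*}
After the zero extension of $\z_u^{k+1}$, the first and last contributions combine into $(\z_u^{k+1} - \z_u^k, w)_{\Omega^k}$, bounded by $\|\z_u^{k+1} - \z_u^k\|_{\Omega^k}\|w\|_{\Omega^k}$; the middle contribution, supported in $\Omega^{k+1}\setminus\Omega^k \subset S_\delta^k$, is bounded by $\|\z_u^{k+1}\|_{S_\delta^k}\|E^k w\|_{\Omega_\delta^k}\le c\|\z_u^{k+1}\|_{S_\delta^k}\|w\|_{\Omega^k}$ via $L^2$-boundedness of $E^k$. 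Substituting back into the elliptic regularity bound produces \eqref{dualreg1}.
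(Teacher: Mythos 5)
Your proposal is correct and follows essentially the same route as the paper: decoupling the global system by Kronecker-delta testing into one generalized Stokes problem per time step, backward induction for existence and uniqueness, and $H^2\times H^1$ Stokes regularity with $L^2$ data, with the coupling term split into its $\Omega^k$ and $S_\delta^k$ contributions. The only cosmetic difference is that you realise the coupling as an explicit $L^2$ representative $g^k=(E^k)^*\z_u^{k+1}$ and conclude by a duality argument, whereas the paper keeps it as a functional $F_k$ and bounds $\sup_{\phib_u^k} F_k(\phib_u^k)/\|\phib_u^k\|_{\Omega^k}$ directly via Temam's Proposition I.2.2.
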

\begin{proof}
By testing \eqref{semidiscDual} with {\sfrei $\tilde \phib_u^l=\delta_{kl} \,\phib_u^k, \tilde \phi_p^l = \delta_{kl} \phi_p^k, l=1,...,n$,} 
where $\delta_{kl}$ is the Kronecker delta, we observe
that {\sfrei the system splits into $n$ separate time steps, where each step corresponds to a stationary Stokes system with an additional $L^2$-term coming from the discretisation of the time derivative. For $k<n$ we have
\begin{eqnarray}
\begin{aligned}\label{statStokes1}
 \frac{1}{\Delta t} (\phib_u^k, \z_u^k)_{\Omega^{k}} + {\cal A}_S^k(\phib_u^k, \phi_p^k; \z_u^k, z_p^k) = \frac{1}{\Delta t} (E^k \phib_u^k, \z_u^{k+1})_{\Omega^{k+1}} &+ \left(\e_u^k, \phib_u^k\right)_{\Omega^k}
 \\ &\forall \phib_u^k \in H^1_0(\Omega^k)^d, \phi_p^k\in L^2_0(\Omega^k),
\end{aligned}
\end{eqnarray}
 and for $k=n$
\begin{align}\label{statStokes2}
 \frac{1}{\Delta t}\left( \phib_u^n, \z_u^n\right)_{\Omega^n} + {\cal A}_S^n(\phib_u^n, \phi_p^n; \z_u^n, z_p^n) &=
  \left(\e_u^n, \phib_u^n\right)_{\Omega^n}   
  \quad\forall \phib_u^n \in H^1_0(\Omega^n)^d, \phi_p^k\in L^2_0(\Omega^n).
 \end{align}
As the corresponding reduced problems are coercive in the velocity space ${\cal V}_0(t_k)$ (cf. Section~\ref{sec.wellp}), existence and uniqueness of solutions $\z_u^k\in H^1_0(\Omega^k), z_p^k\in L^2_0(\Omega^k)$ follow inductively by standard arguments for $k=n,...,1$, see e.g.\,Temam~\cite{Temam2000}, Section I.2.
 
 To show the regularity estimates \eqref{dualreg1} and \eqref{dualreg2}, let us re-formulate the problems \eqref{statStokes1} and \eqref{statStokes2} in the following way: For $k<n$ we have
}
\begin{eqnarray}
\begin{aligned}\label{statStokes}
 {\cal A}_S^k(\phib_u^k, \phi_p^k; \z_u^k, z_p^k) &= \underbrace{\frac{1}{\Delta t}\left(  (E^k \phib_u^k, \z_u^{k+1})_{\Omega^{k+1}} -(\phib_u^k, \z_u^k)_{\Omega^{k}}\right) + \left(\e_u^k, \phib_u^k\right)_{\Omega^k}}_{=: F_k(\phib_u^k)}\\
 &\hspace{5.5cm} \forall \phib_u^k \in H^1_0(\Omega^k)^d, \phi_p^k\in L^2_0(\Omega^k), 
\end{aligned}
\end{eqnarray}
and for $k=n$
\begin{align*}
 {\cal A}_S^n(\phib_u^n, \phi_p^n; \z_u^n, z_p^n) &=
  \underbrace{\left(\e_u^n, \phib_u^n\right)_{\Omega^n}  
  - \frac{1}{\Delta t}\left( \phib_u^n, \z_u^n\right)_{\Omega^n}}_{=:F_n(\phib_u^n)} 
  \quad\forall \phib_u^n \in H^1_0(\Omega^n)^d, \phi_p^k\in L^2_0(\Omega^n).
 \end{align*}

If we can prove that $F_k$ lies in the dual space $[L^2(\Omega^k)^d]^*$, Proposition I.2.2 in Temam's book~\cite{Temam2000} guarantees 
the regularity estimate
\begin{align}\label{TemamH2}
 \|\z_u^k\|_{H^2(\Omega^k)} + \|z_p^k\|_{H^1(\Omega^k)} \leq c\sup_{\phib_u^k \in L^2(\Omega^k)} \frac{F_k(\phib_u^k)}{\|\phib_u^k\|_{\Omega^k}}.
\end{align}
We need to show that the right-hand side is bounded. Splitting the first integral on the right-hand side into an integral 
over $\Omega^k$ and $S_{\delta}^k$, we have for $k<n$
\begin{align*}
 F_k(\phib_u^k) 
 &\leq \frac{1}{\Delta t}\left(  \|\phib_u^k\|_{\Omega^k} \|\z_u^{k+1}-\z_u^k\|_{\Omega^{k}} +\|E^k\phib_u^k\|_{S_\delta^k} \|E^k \z_u^{k+1}\|_{S_\delta^k}\right) + \|\e_u^k\|_{\Omega^k} \|\phib_u^k\|_{\Omega^k}.
\end{align*}
and thus,
\begin{align}\label{Dualk}
 F_k(\phib_u^k) \leq c \left(\frac{1}{\Delta t}\left(\|\z_u^{k+1}-\z_u^k\|_{\Omega^{k}} + \|E^k \z_u^{k+1}\|_{S_\delta^{k}}\right) + \|\e_u^k\|_{\Omega^k} \right) \|\phib_u^k\|_{\Omega^k}.
\end{align}
For $k=n$, we obtain
\begin{align}\label{Dualkn}
 F_n(\phib_u^n) \leq c \left(\frac{1}{\Delta t}\|\z_u^n\|_{\Omega^{n}} + \|\e_u^n\|_{\Omega^n} \right) \|\phib_u^n\|_{\Omega^n}.
\end{align}
The boundedness of $F_k$ follows by induction for $k=n,\dots,1$ and by using the stability of the extension operator $E^k$.
Combination of \eqref{TemamH2} and \eqref{Dualk}, resp. \eqref{Dualkn}, yield the regularity estimates \eqref{statStokes1} and \eqref{statStokes2}.
\end{proof}

Next, we derive a stability estimate for the semi-discretised dual problem \eqref{semidiscDual}. We remark that a stability estimate for the continuous dual problem, including the first time derivative $\partial_t z$,
could be obtained as well. This is however not enough to bound the consistency error of the time derivative in a sufficient way for an 
optimal $L^2$-norm error estimate. 

\begin{lemma}\label{lem.StabDual}
{\sfrei Let the assumptions made in Section~\ref{sec.disc} be valid, including Assumption~\ref{ass.T}. For sufficiently small $\Delta t<\xi$, where $\xi$ depends only on $c_\delta$, $w_{\max}$ and the domains $\Omega^k, k=1,...,n$,} the solution $(\z_u^k, z_p^k)_{k=1}^{n}$ 
to the semi-discretised dual problem \eqref{semidiscDual} for $s=1$ fulfils the stability estimate
  \begin{align*}
 \|\nabla \z_u^1\|_{\Omega^1}^2  + \frac{1}{\Delta t} \|&\nabla \z_u^n\|_{\Omega^n}^2 
 + \sum_{k=1}^{n-1} \left\{ \|\nabla (\z_u^k-\z_u^{k+1})\|_{\Omega^k}^2 + \frac{1}{\Delta t} \|\z_u^k-\z_u^{k+1}\|_{\Omega^k}^2\right\} \\
 &\qquad+ \Delta t\sum_{k=1}^n \left\{  \|\z_u^k\|_{H^2(\Omega^k)}^2  + \|z_p^k\|_{H^1(\Omega^k)}^2\right\}  \leq c 
 {\sfrei w_{\max}^2} \Delta t \sum_{k=1}^n \|\e_u^k\|_{\Omega^k}^2.
\end{align*}
\end{lemma}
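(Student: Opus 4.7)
The plan is to test the semi-discrete dual equations \eqref{statStokes1}--\eqref{statStokes2} against their own solution, $\phib_u^k=\z_u^k$ and $\phi_p^k=z_p^k$. This is admissible because $\z_u^k\in H^1_0(\Omega^k)$ and $z_p^k\in L^2_0(\Omega^k)$, and the skew-symmetric pressure block in ${\cal A}_S^k$ cancels, producing the discrete energy identity
\begin{align*}
\tfrac{1}{\Delta t}\|\z_u^k\|_{\Omega^k}^2+\|\nabla\z_u^k\|_{\Omega^k}^2=\tfrac{1}{\Delta t}(E^k\z_u^k,\z_u^{k+1})_{\Omega^{k+1}}+(\e_u^k,\z_u^k)_{\Omega^k},\quad k<n,
\end{align*}
and the same identity at $k=n$ without the coupling term. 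I would then split the coupling term across $\Omega^{k+1}\cap\Omega^k$, where $E^k\z_u^k\equiv\z_u^k$ by \eqref{extension}, and $\Omega^{k+1}\setminus\Omega^k\subset S_\delta^k$. On the intersection the polarisation identity $2(a,b)=\|a\|^2+\|b\|^2-\|a-b\|^2$ generates a telescoping structure together with the dissipative jump $\tfrac{1}{\Delta t}\|\z_u^k-\z_u^{k+1}\|_{\Omega^{k+1}\cap\Omega^k}^2$ on the left-hand side.

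The strip residuals $\tfrac{1}{\Delta t}(E^k\z_u^k,\z_u^{k+1})_{S_\delta^k}$ are the crucial ingredient. I handle them via the trace bound \eqref{deltaeps} from Lemma~\ref{lem.domainExt} applied to both factors, choosing the free parameter as $\epsilon\sim w_{\max}^{-1}$ and substituting $\delta\le c_\delta w_{\max}\Delta t$ from \eqref{defDelta}. This yields an upper bound of the form $c\|\nabla\z_u^{k+1}\|_{\Omega^{k+1}}^2+cw_{\max}^2\|\z_u^{k+1}\|_{\Omega^{k+1}}^2$ (plus analogous contributions for $\z_u^k$). The gradient pieces are absorbed on the left after summation, while the $L^2$ pieces are removed by a discrete Gronwall inequality applied in the reverse direction $k=n,\dots,1$; the smallness condition $\Delta t<\xi$ guarantees that the amplification factor $1+cw_{\max}^2\Delta t$ produced at each step does not blow up before reaching $k=1$. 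The source term $\Delta t(\e_u^k,\z_u^k)$ is treated by Young's inequality and ultimately contributes the factor $w_{\max}^2\Delta t\sum_k\|\e_u^k\|^2$ on the right.

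To recover the gradient dissipation $\|\nabla(\z_u^k-\z_u^{k+1})\|_{\Omega^k}^2$ I plan a second round: subtract the equation at level $k+1$ from the one at level $k$ on their common domain and test with the increment $\z_u^k-E^k\z_u^{k+1}$ (together with the corresponding pressure increment). The Stokes block then reproduces exactly the required $H^1$-seminorm of the increment, the $L^2$ block contributes a sign-definite term, and the strip leftovers are absorbed once more via Lemma~\ref{lem.domainExt}. Finally, the $\Delta t$-weighted $H^2\times H^1$ regularity estimate follows by squaring the pointwise bounds \eqref{dualreg1}--\eqref{dualreg2} from Lemma~\ref{lem.dualEx}, multiplying by $\Delta t$ and summing; the resulting right-hand side $\tfrac{1}{\Delta t}(\|\z_u^{k+1}-\z_u^k\|_{\Omega^k}^2+\|\z_u^{k+1}\|_{S_\delta^k}^2)+\|\e_u^k\|^2$ is already controlled by the preceding estimates, with one additional application of Lemma~\ref{lem.domainExt} to the strip term. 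The main obstacle throughout is the persistent domain mismatch $\Omega^k\ne\Omega^{k+1}$: every telescoping step leaves a residual on $S_\delta^k$ whose size is tamed only by the geometric relation $\delta\le c_\delta w_{\max}\Delta t$ together with Assumption~\ref{ass.T}, and closing the estimate without accumulating an uncontrollable $w_{\max}\Delta t$-perturbation is exactly what forces the smallness hypothesis $\Delta t<\xi$ and produces the $w_{\max}^2$ factor on the right-hand side.
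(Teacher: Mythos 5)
Your overall architecture coincides with the paper's (diagonal testing for a first energy bound, a second increment-testing round, the $H^2\times H^1$ regularity of Lemma~\ref{lem.dualEx}, and strip estimates via $\delta\le c_\delta w_{\max}\Delta t$), but two steps as you describe them do not go through. The increment $\z_u^k-E^k\z_u^{k+1}$ is not an admissible test function in either weak formulation: it vanishes neither on $\partial\Omega^k$ nor on $\partial\Omega^{k+1}$, and subtracting the equations at levels $k$ and $k+1$ additionally drags in $\z_u^{k+2}$ through the coupling term, so you do not obtain a clean sign-definite $L^2$ block. The paper avoids both problems by first using the $H^2(\Omega^k)\times H^1(\Omega^k)$ regularity of Lemma~\ref{lem.dualEx} to rewrite each single time step in strong form, valid for all $\phib_u^k\in L^2(\Omega^k)^d$, see \eqref{DualStab1}, and then testing that one equation with $\phib_u^k=\z_u^k-\z_u^{k+1}$ and $\phi_p^k=0$; the price is a set of leftovers containing $\partial_n\z_u^k$ on $\partial\Omega^k$, $\Delta\z_u^k$ and $\nabla z_p^k$ on $S_\delta^k$, which your sketch does not account for.

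This leads to the second, more structural gap: your sequential closure is circular. Those leftovers can only be absorbed through $\epsilon\Delta t$-weighted $\|\z_u^k\|_{H^2(\Omega^k)}^2$ and $\|z_p^k\|_{H^1(\Omega^k)}^2$ terms (applying \eqref{deltaeps} to $\nabla\z_u$ and $\nabla z_p$ produces exactly such contributions), while the summed regularity bound \eqref{sumzukH2} has the $\tfrac{1}{\Delta t}$-weighted jumps $\|\z_u^{k+1}-\z_u^k\|_{\Omega^k}^2$ — precisely the quantities the increment test is supposed to deliver — on its right-hand side with the large factor $c_0/\Delta t$. The paper closes this loop by adding $3\epsilon$ times \eqref{sumzukH2} to the summed increment estimate and choosing $\epsilon$ small relative to $c_0$; invoking Lemma~\ref{lem.domainExt} ``once more'' does not by itself close your second round, and the $H^2\times H^1$ bound cannot be postponed until after it. A smaller quantitative point: in the first round you use \eqref{deltaeps} with $\epsilon\sim w_{\max}^{-1}$ plus a backward discrete Gronwall argument, which leaves an $\exp(cw_{\max}^2t_n)$ factor in the constant; the paper instead exploits $\z_u^k|_{\partial\Omega^k}=0$ and a Poincar\'e inequality on the strip of width $\delta\le c_\delta w_{\max}\Delta t$ (see \eqref{dualSdelta}), so the strip couplings are pure gradient terms absorbable for $\Delta t$ below a threshold, no Gronwall is needed, and the stated clean $w_{\max}^2$ factor results.
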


\begin{proof}
We show a stability estimate for the first derivatives $\nabla \z_u^k$ first. Diagonal testing in \eqref{semidiscDual} with $\phib_u^k = \z_u^k, \phi_p^k = z_p^k$ results in 
\begin{align*}
 \sum_{k=1}^{n} &\left\{ (\z_u^k- \z_u^{k-1}, \z_u^k)_{\Omega^{k}} +  \Delta t \|\nabla \z_u^k\|_{\Omega^{k}}^2\right\} 
 + (\z_u^0, \z_u^1)_{\Omega^1} = \Delta t \sum_{k=1}^{n} \left(\e_u^k, \z_u^k\right)_{\Omega^k},
\end{align*}
or equivalently
\begin{align}\label{dualcite}
 \sum_{k=1}^{n-1} &\left\{ \|\z_u^k\|_{\Omega^{k}}^2 - (\z_u^k, \z_u^{k+1})_{\Omega^{k+1}} +  \Delta t \|\nabla \z_u^k\|_{\Omega^{k}}^2\right\} 
 + \|\z_u^n\|_{\Omega^n}^2 +  \Delta t \|\nabla \z_u^n\|_{\Omega^n}^2 
 = \Delta t \sum_{k=1}^{n} \left(\e_u^k, \z_u^k\right)_{\Omega^k}.
\end{align}
As $\z_u^k$ vanishes on $\partial\Omega^k$, a Poincar\'e-like estimate gives in combination {\sfrei with \eqref{defDelta}} and the stability of the extension operator
\begin{align}\label{dualSdelta}
 \|\z_u^k\|_{S_\delta^k} \leq {\sfrei c_p \left( \delta^{1/2} \|\z_u^k\|_{\partial \Omega^k} + \delta \|\nabla \z_u^k\|_{S_\delta^k} \right) \leq c_p c_\delta w_{\rm max} \Delta t \|\nabla \z_u^k\|_{\Omega^k},}
\end{align}
{\sfrei where $c_p$ denotes a constant depending on the domain $\Omega^k$ and $c_\delta>1$ is the constant in \eqref{defDelta}.
Using Young's inequality, this implies for $\Delta t
\leq (4c_p^2 c_\delta^2 w_{\max}^2)^{-1}$
\begin{align*}
\|\z_u^k\|_{S_\delta^k} \|\z_u^{k+1}\|_{S_\delta^k} \, &\leq \, 
c_p^2 c_\delta^2 w_{\max}^2 \Delta t^2 \left(\|\nabla \z_u^k\|_{\Omega^k}^2 + \|\nabla \z_u^{k+1}\|_{\Omega^{k+1}}^2\right)\\
&\leq \frac{\Delta t}{4} \left(\|\nabla \z_u^k\|_{\Omega^k}^2 + \|\nabla \z_u^{k+1}\|_{\Omega^{k+1}}^2\right).
\end{align*}
}
We obtain
\begin{eqnarray}\label{Sdeltaarg}
\begin{aligned}
  \|\z_u^k\|_{\Omega^{k}}^2 - &(\z_u^k, \z_u^{k+1})_{\Omega^{k+1}} 
  \geq  (\z_u^k, \z_u^k - \z_u^{k+1})_{\Omega^k} -  \|\z_u^k\|_{S_\delta^k} \|\z_u^{k+1}\|_{S_\delta^k} \\
  &\geq \frac{1}{2} \left( \|\z_u^k\|_{\Omega^k}^2 + \|\z_u^k - \z_u^{k+1} \|_{\Omega^k}^2 - \|\z_u^{k+1} \|_{\Omega^{k+1}}^2\right)
  - {\sfrei \frac{\Delta t}{4}} \left(\|\nabla \z_u^k\|_{\Omega^k}^2 + \|\nabla \z_u^{k+1}\|_{\Omega^{k+1}}^2\right). 
\end{aligned}
\end{eqnarray}
{\sfrei For the right-hand side in \eqref{dualcite}, we apply 
the Cauchy-Schwarz, a Poincar\'e and Young's inequality to get
\begin{align}\label{dualrhs}
\Delta t \sum_{k=1}^{n} \left(\e_u^k, \z_u^k\right)_{\Omega^k} \,\leq \,\sum_{k=1}^n \frac{\Delta t}{4}
 \|\nabla \z_u^k\|_{\Omega^k}^2 + c\Delta t \|\e_u^k\|_{\Omega^k}^2.
\end{align}}
{\sfrei Using \eqref{Sdeltaarg} and \eqref{dualrhs}, \eqref{dualcite} writes } 
\begin{align}\label{H1stabdual}
 \|\z_u^1\|_{\Omega^1}^2 + \|\z_u^n\|_{\Omega^n}^2 +  \sum_{k=1}^{n-1} \|\z_u^k - \z_u^{k+1} \|_{\Omega^k}^2 + \sum_{k=1}^{n}  \Delta t \|\nabla \z_u^k\|_{\Omega^{k}}^2
  \leq {\sfrei c \Delta t \sum_{k=1}^n \|\e_u^k\|_{\Omega^k}^2.}
\end{align}

Next, we use the regularity estimates in Lemma~\ref{lem.dualEx} to get a bound for the second derivatives of $\z_u^k$. For $k=n$ we have
 \begin{align*}
  \|\z_u^n\|_{H^2(\Omega^n)} + \|z_p^n\|_{H^1(\Omega^n)} \leq c \left(\frac{1}{\Delta t}\|\z_u^n\|_{\Omega^{n}} + \|\e_u^n\|_{\Omega^n} \right).
 \end{align*}
{\sfrei For $k<n$ Lemma~\ref{lem.dualEx} gives us
  \begin{align*}
   \|\z_u^k\|_{H^2(\Omega^k)} + \|z_p^k\|_{H^1(\Omega^k)} \leq c  \left(\frac{1}{\Delta t}\left(\|\z_u^{k+1}-\z_u^k\|_{\Omega^{k}} + \|\z_u^{k+1}\|_{S_\delta^{k}}\right) + \|\e_u^k\|_{\Omega^k} \right).
  \end{align*}
We estimate the term on $S_\delta^k$ by using a Poincar\'e-type inequality with a domain-dependent constant $c_p>0$ as in \eqref{dualSdelta}, followed by \eqref{deltaeps} for $\epsilon=1$ and the stability of the extension
\begin{align*}
 \|\z_u^{k+1}\|_{S_\delta^k} &\leq c_p\delta \|\nabla \z_u^{k+1}\|_{S_\delta^k} \leq 
c c_p\delta^{3/2} \|\z_u^{k+1}\|_{H^2(\Omega_\delta^k)} \leq
 c c_p\delta^{3/2} \|\z_u^{k+1}\|_{H^2(\Omega^{k+1})}.
\end{align*}
Using \eqref{defDelta} we get for $\Delta t<(2 c^2 c_p^2 c_\delta^3 w_{\max}^3)^{-1}$
\begin{align}
\|\z_u^{k+1}\|_{S_\delta^k}\leq  c c_p (c_\delta w_{\max}\Delta t)^{3/2} \|\z_u^{k+1}\|_{H^2(\Omega^{k+1})} \leq 
 \frac{\Delta t}{2} \|\z_u^{k+1}\|_{H^2(\Omega^{k+1})},
\end{align} 
and hence
 \begin{align*}
  \|\z_u^k\|_{H^2(\Omega^k)} + \|z_p^k\|_{H^1(\Omega^k)} \leq \frac{c}{\Delta t}\|\z_u^{k+1}-\z_u^k\|_{\Omega^{k}} + \frac{1}{2}\|\z_u^{k+1}\|_{H^2(\Omega^{k+1})} + c\|\e_u^k\|_{\Omega^k}.
 \end{align*}
 Summation over $k=1,...,n$ results in}
\begin{eqnarray}
\begin{aligned}\label{sumzukH2}
 \Delta t \sum_{k=1}^n &\left\{\|\z_u^k\|_{H^2(\Omega^k)}^2 + \|z_p^k\|_{H^1(\Omega^k)}^2\right\} \\
 &\qquad\qquad\leq \frac{c_0}{\Delta t} \left( \|\z_u^n\|_{\Omega^{n}}^2 + \sum_{k=1}^{n-1} \|\z_u^{k+1}-\z_u^k\|_{\Omega^{k}}^2\right) 
 + c \Delta t\sum_{k=1}^n \|\e_u^k\|_{\Omega^k},
\end{aligned}
\end{eqnarray}
where $c_0$ denotes a constant.
It remains to derive a bound for the discrete time derivative on the right-hand side. Therefore, note that for $k<n$ 
we can write \eqref{statStokes} equivalently by using the density of $H^1(\Omega^k)$ in $L^2(\Omega^k)$ as
\begin{eqnarray}
\begin{aligned}\label{DualStab1}
 - (\Delta \z_u^k, &\phib_u^k)_{\Omega^k} + (\nabla z_p^k, \phib_u^k)_{\Omega^k} - (\z_u^k, \nabla \phi_p^k)_{\Omega^k}\\
 &= \frac{1}{\Delta t}\left(  (E\phib_u^k, \z_u^{k+1})_{\Omega^{k+1}} -(\phib_u^k, \z_u^k)_{\Omega^{k}}\right) + \left(\e_u^k, \phib_u^k\right)_{\Omega^k}
 \quad \forall \phib_u^k\in L^2(\Omega^k)^d, \phi_p^k\in H^1(\Omega^k).
\end{aligned}
\end{eqnarray}
For $k=n$ we have
\begin{eqnarray}
\begin{aligned}\label{Dualkeqn}
  - (\Delta \z_u^n, \phib_u^n)_{\Omega^n} + (\nabla z_p^n, &\phib_u^n)_{\Omega^n} - (\z_u^n, \nabla \phi_p^n)_{\Omega^n}\\
  &= \left(\e_u^n, \phib_u^n\right)_{\Omega^n}  - \frac{1}{\Delta t}\left( \phib_u^n, \z_u^n\right)_{\Omega^n}
  \quad\forall \phib_u^n\in L^2(\Omega^n)^d, \phi_p^n\in H^1(\Omega^n).
\end{aligned}
\end{eqnarray}
For $k<n$ we test \eqref{DualStab1} with $\phib_u^k=\z_u^k - \z_u^{k+1}, \phi_p^k=0$
\begin{eqnarray}
\begin{aligned}\label{StabDual2}
 - (\Delta \z_u^k, &\,\z_u^k - \z_u^{k+1})_{\Omega^k} + (\nabla z_p^k, \z_u^k - \z_u^{k+1})_{\Omega^k}
 + \frac{1}{\Delta t}\big(  \|\z_u^k - \z_u^{k+1}\|_{\Omega^{k+1}}^2 \\
 &+(\z_u^k - \z_u^{k+1}, \z_u^{k+1})_{\Omega^{k}\setminus\Omega^{k+1}} - (\z_u^k - \z_u^{k+1}, \z_u^{k+1})_{\Omega^{k+1}\setminus\Omega^k}\big) 
 =\left(\e_u^k, \z_u^k - \z_u^{k+1}\right)_{\Omega^k}.
\end{aligned}
\end{eqnarray}

{\sfrei 
Using integration by parts and the fact that $\z_u^k|_{\partial\Omega^k} =0$, the first term in 
\eqref{StabDual2} writes
\begin{align*}
- (\Delta \z_u^k, &\,\z_u^k - \z_u^{k+1})_{\Omega^k}
=  \left(\nabla \z_u^k, \,\nabla (\z_u^k - \z_u^{k+1})\right)_{\Omega^k} + \left(\partial_n \z_u^k, \z_u^{k+1}\right)_{\partial\Omega^k}.
\end{align*}
For the second term in \eqref{StabDual2} we note that
$(\nabla z_p^k, \z_u^l)_{\Omega^l}=0$ for $l=k, k+1$
\begin{align*}
(\nabla z_p^k, \z_u^k - \z_u^{k+1})_{\Omega^k}
= -(\nabla z_p^k, \z_u^{k+1})_{\Omega^{k+1}\setminus\Omega^k} + (\nabla z_p^k, \z_u^{k+1})_{\Omega^{k}\setminus\Omega^{k+1}}
\geq -\|\nabla z_p^k\|_{S_\delta^k} \|\z_u^{k+1}\|_{S_\delta^k}.
\end{align*}
Using the Cauchy-Schwarz and Young's inequality, we obtain from \eqref{StabDual2}
}
\begin{eqnarray}
\begin{aligned}\label{StabDual3}
 \frac{1}{2\Delta t}&\left( \left\|\z_u^k - \z_u^{k+1}\right\|_{\Omega^k}^2 -\left\|\z_u^k - \z_u^{k+1}\right\|_{S_\delta^k}\left\|\z_u^{k+1}\right\|_{S_\delta^k}\right)
 + \left(\nabla \z_u^k, \nabla (\z_u^k-\z_u^{k+1})\right)_{\Omega^k}\\
 &\qquad\qquad\qquad\qquad-  (\partial_n \z_u^k, \z_u^{k+1})_{\partial\Omega^k} 
 -\|\nabla z_p^k\|_{S_\delta^k} \|\z_u^{k+1}\|_{S_\delta^k} \,\leq\, c\Delta t \|\e_u^k\|_{\Omega^k}^2.
\end{aligned}
\end{eqnarray}
{\sfrei To estimate the second term on the left-hand side, we
apply the triangle inequality and Young's inequality first to get
\begin{align}\label{startsecterm}
\frac{1}{\Delta t} \left\|\z_u^k - \z_u^{k+1}\right\|_{S_\delta^k}\left\|\z_u^{k+1}\right\|_{S_\delta^k} \leq \frac{c}{\Delta t} \left( \left\|\z_u^k\right\|_{S_\delta^k}^2 + \left\|\z_u^{k+1}\right\|_{S_\delta^k}^2\right)
\end{align}
Next, we use a Poincar\'e-type estimate with a constant $c_p>0$, see~\eqref{dualSdelta}, and the fact that $\z_u^l=0$ on $\partial\Omega^l$ for $l=k,k+1$
\begin{eqnarray}
\begin{aligned}\label{delta2arg}
\frac{c}{\Delta t}\left\|\z_u^l\right\|_{S_\delta^k}^2
\leq cc_p^2\frac{\delta}{\Delta t} \left(\left\|\z_u^l\right\|_{\partial\Omega}^2 + \delta \left\|\nabla \z_u^l\right\|_{S_\delta^k}^2\right)
= cc_p^2\frac{\delta^2}{\Delta t}  \left\|\nabla \z_u^l\right\|_{S_\delta^k}^2 
\end{aligned}
\end{eqnarray}
Using \eqref{deltaeps} followed by \eqref{defDelta}, we obtain further
\begin{eqnarray}\label{endsecterm}
\begin{aligned}
cc_p^2\frac{\delta^2}{\Delta t}  \left\|\nabla \z_u^l\right\|_{S_\delta^k}^2 
&\leq  
cc_p^2\frac{\delta^3}{\Delta t}  \left(\epsilon^{-1}\left\|\nabla \z_u^l\right\|_{\Omega^l}^2 + \epsilon\left\|\nabla^2 \z_u^l\right\|_{\Omega^l}^2\right) \\
&\leq c c_p^2 c_\delta^3 w_{\max}^3\Delta t^2  \left(\epsilon^{-1}\left\|\nabla \z_u^l\right\|_{\Omega^l}^2 + \epsilon\left\|\nabla^2 \z_u^l\right\|_{\Omega^l}^2\right).
\end{aligned}
\end{eqnarray}
For sufficiently small $\Delta t < \frac{1}{c c_p^2 c_\delta^3 w_{\max}^3}$ we obtain from \eqref{startsecterm}-\eqref{endsecterm}
\begin{eqnarray}
\begin{aligned}
\frac{1}{\Delta t} \left\|\z_u^k - \z_u^{k+1}\right\|_{S_\delta^k}\left\|\z_u^{k+1}\right\|_{S_\delta^k}
 &\leq \Delta t \sum_{l=k}^{k+1} \left(\epsilon^{-1}\left\|\nabla \z_u^l\right\|_{\Omega^l}^2 + \epsilon\left\|\nabla^2 \z_u^l\right\|_{\Omega^l}^2\right).
\end{aligned}
\end{eqnarray}}
For the third term in \eqref{StabDual3}, we use a telescope argument
\begin{align*}
  \left(\nabla \z_u^k, \nabla (\z_u^k-\z_u^{k+1})\right)_{\Omega^k} = \frac{1}{2} \left( \|\nabla \z_u^k\|_{\Omega^k}^2 + \|\nabla (\z_u^k- \z_u^{k+1})\|_{\Omega^k}^2 - \|\nabla \z_u^{k+1}\|_{\Omega^{k}}^2\right).
\end{align*}
To bring the last term to $\Omega^{k+1}$, we estimate using \eqref{deltaeps}
\begin{align*}
 \|\nabla \z_u^{k+1}\|_{\Omega^{k}}^2\leq \|\nabla \z_u^{k+1}\|_{\Omega^{k+1}}^2 + \|\nabla \z_u^{k+1}\|_{S_\delta^{k}}^2 \leq (1+c c_\delta^2 w_{\max}^2\epsilon^{-1}\Delta t) \|\nabla \z_u^{k+1}\|_{\Omega^{k+1}}^2 +\epsilon\Delta t \|\nabla^2 \z_u^{k+1}\|_{\Omega^{k+1}}^2
\end{align*}
{\sfrei For the boundary term in \eqref{StabDual3}, we use Green's theorem on $S_\delta^k$ 
\begin{align}\label{dnzukzuk}
  (\partial_n \z_u^k, \z_u^{k+1})_{\partial\Omega^k} &\leq -(\partial_n \z_u^k, \underbrace{\z_u^{k+1}}_{=0})_{\partial\Omega^{k+1}} + \|\nabla \z_u^k\|_{S_\delta^k} \|\nabla \z_u^{k+1}\|_{S_\delta^k} 
 + \|\Delta \z_u^k\|_{S_\delta^k} \|\z_u^{k+1}\|_{S_\delta^k}
\end{align}
For the second term on the right-hand side in \eqref{dnzukzuk} we use \eqref{deltaeps} twice, followed by \eqref{defDelta} and 
Young's inequality
\begin{align*}
\|\nabla \z_u^k\|_{S_\delta^k} &\|\nabla \z_u^{k+1}\|_{S_\delta^k}\\
&\leq 
 c\delta \left(\epsilon^{-1/2}\|\nabla \z_u^k\|_{\Omega^k} + \epsilon^{1/2}\|\nabla^2 \z_u^k\|_{\Omega^k}\right)  \left(\epsilon^{-1/2}\|\nabla \z_u^{k+1}\|_{\Omega^{k+1}} +\epsilon^{1/2} \|\nabla^2 \z_u^{k+1}\|_{\Omega^{k+1}}\right)\\
 &\leq  \Delta t \sum_{l=k}^{k+1}\left(c c_\delta^2 w_{\max}^2 \epsilon^{-1}\|\nabla \z_u^l\|_{\Omega^l}^2 + \epsilon\|\nabla^2 \z_u^l\|_{\Omega^l}^2\right).
 \end{align*} 
For the last term in \eqref{dnzukzuk} we obtain as in \eqref{delta2arg}
\begin{align}\label{dnzukzukfinal}
\|\Delta \z_u^k\|_{S_\delta^k} \|\z_u^{k+1}\|_{S_\delta^k}
\,\leq\, c\delta \|\Delta \z_u^k\|_{S_\delta^k} \|\nabla \z_u^{k+1}\|_{S_\delta^k}
\,\leq\, \epsilon\Delta t \|\Delta \z_u^k\|_{\Omega^k}^2 + c c_\delta^2 w_{\max}^2 \epsilon^{-1}\Delta t \|\nabla \z_u^{k+1}\|_{\Omega^{k+1}}^2.
\end{align}  
In the last inequality, we have used \eqref{defDelta} and Young's inequality.
Together, \eqref{dnzukzuk}-\eqref{dnzukzukfinal} yield the estimate 
\begin{align*}
(\partial_n \z_u^k, \z_u^{k+1})_{\partial\Omega^k} &\leq 
\Delta t \sum_{l=k}^{k+1}\left(c c_\delta^2 w_{\max}^2 \epsilon^{-1}\|\nabla \z_u^l\|_{\Omega^l}^2 + \epsilon\|\nabla^2 \z_u^l\|_{\Omega^l}^2\right).
\end{align*}}
To estimate the pressure term in \eqref{StabDual3}, we obtain as in \eqref{dnzukzukfinal}
\begin{align*}
 \|\nabla z_p^k\|_{S_\delta^k} \|\z_u^{k+1}\|_{S_\delta^k} 
 &\leq \Delta t \left(\epsilon \|\nabla z_p^k\|_{\Omega^k}^2 +c c_\delta^2 w_{\max}^2 \epsilon^{-1} \|\nabla \z_u^{k+1}\|_{\Omega^{k+1}}^2\right).
\end{align*}
To summarise we have shown that
\begin{eqnarray}
\begin{aligned}\label{zuk}
  \frac{1}{2\Delta t} &\left\|\z_u^k - \z_u^{k+1}\right\|_{\Omega^k}^2 
  +\frac{1}{2} \left( (1-c\epsilon^{-1} {\sfrei w_{\max}^2} \Delta t) \|\nabla \z_u^k\|_{\Omega^k}^2 + \|\nabla (\z_u^k- \z_u^{k+1})\|_{\Omega^k}^2 \right)\\  
  &\qquad\leq {\sfrei \frac{1}{2}} (1+c \epsilon^{-1}{\sfrei w_{\max}^2} \Delta t) \|\nabla \z_u^{k+1}\|_{\Omega^{k+1}}^2 \\
  &\qquad\qquad\qquad\qquad+ \epsilon\Delta t \left( \|\nabla z_p^k\|_{\Omega^k}^2 +  \|\nabla^2 \z_u^{k}\|_{\Omega^{k}}^2 + \|\nabla^2 \z_u^{k+1}\|_{\Omega^{k+1}}^2\right)
  + c\Delta t \|\e_u^k\|_{\Omega^k}^2.
\end{aligned}
\end{eqnarray}
For $k=n$ we obtain from \eqref{Dualkeqn} tested with $\phib_u^n=\z_u^n$ and $\phi_p^n=z_p^n$ that
\begin{align}\label{zun}
  \frac{1}{2\Delta t}& \left\|\z_u^n\right\|_{\Omega^n}^2 
  +\frac{1}{2} \|\nabla \z_u^n\|_{\Omega^n}^2
  \leq \frac{\Delta t}{2} \|\e_u^n\|_{\Omega^n}^2.
\end{align}

Summation in \eqref{zuk} over $k=1,\dots,n-1$ and addition of \eqref{zun} and \eqref{sumzukH2} multiplied by a factor of $3\epsilon$ yields for $\epsilon<\frac{1}{12c_0}$
\begin{eqnarray}
\begin{aligned}\label{DualVelRes}
 \|\nabla \z_u^1\|_{\Omega^1}^2 &+ \|\nabla \z_u^n\|_{\Omega^n}^2 + \frac{1}{\Delta t} \|\z_u^n\|_{\Omega^n}^2 
 + \sum_{k=1}^{n-1} \left\{ \frac{1}{\Delta t} \left\|\z_u^k - \z_u^{k+1}\right\|_{\Omega^k}^2 +  \|\nabla (\z_u^k-\z_u^{k+1})\|_{\Omega^k}^2  \right\}\\
 &+ \epsilon \Delta t\sum_{k=1}^n  \left\{  \|\z_u^k\|_{H^2(\Omega^k)}^2 + \|z_p^k\|_{H^1(\Omega^k)}^2 \right\}
 \leq c\Delta t \sum_{k=1}^n \left\{\|\e_u^k\|_{\Omega^k}^2 + {\sfrei w_{\max}^2}\|\nabla \z_u^k\|_{\Omega^k}^2\right\}.
\end{aligned}
\end{eqnarray}
{\sfrei Using \eqref{H1stabdual} we can estimate the last term by
\begin{align*}
w_{\max}^2 \Delta t \sum_{k=1}^n \|\nabla \z_u^k\|_{\Omega^k}^2
\leq c w_{\max}^2 \Delta t \sum_{k=1}^n \|\e_u^k\|_{\Omega^k}^2,
\end{align*}
which completes the proof.
}

\end{proof}

Now we are ready to prove an error estimate for the $L^2(L^2)$-norm of the velocities. First, we note that,
{\sfrei due to the regularity proven in Lemma~\ref{lem.dualEx}},
 the solution $(\z_u^k, z_p^k)_{k=1}^{n}$ of \eqref{semidiscDual} is
also the unique solution to the Nitsche formulation: \textit{Find $(\z_u^k, z_p^k)_{k=1}^{n}$, where $\z_u^k\in H^2(\Omega^k)^d, z_p^k \in H^1(\Omega) \cap L^2_0(\Omega^k)$ such that}
\begin{eqnarray}
\begin{aligned}\label{semidiscDualNitsche}
 \Delta t \sum_{k=1}^n \left(\e_u^k, \phib_u^k\right)_{\Omega^k} = \sum_{k=1}^{n} &\left\{ (\phib_u^k - \phib_u^{k-1}, \z_u^k)_{\Omega^k} + \Delta t 
 \left( {\cal A}_S^k +a_D^k\right)(\phib_u^k, \phi_p^k; \z_u^k, z_p^k) \right\} \, + (\phib_u^0, \z_u^1)_{\Omega^1} \\
 &\hspace{4cm} \forall \phib_u^k \in H^1(\Omega^k), \phi_p^k\in L^2(\Omega^k), \quad k=1,\dots, n.
\end{aligned}
\end{eqnarray}

\begin{theorem}\label{theo.L2error}
We assume that the solution $(\u,p)$ of \eqref{Stokes} fulfils the regularity assumptions $\u(t_k) \in H^{m+1}(\Omega^k)^d$ and $p(t_k) \in H^m(\Omega^k)$ for $k=1,\dots\,n$ 
and $s=1$.
 Under the assumptions of Theorem~\ref{theo.energyerror} and the inverse CFL condition $\Delta t \geq ch^2$ for some $c>0$, it holds that
 \begin{align*}
  &\left(\Delta t \sum_{k=1}^n \|\e_u^k\|_{\Omega^k}^2\right)^{1/2} \\
  &\qquad\qquad\leq c {\sfrei w_{\max}} \exp({\sfrei c_1(w_{\max})} t_n) \Big( \Delta t \|\partial_t^2 \u\|_{Q}
  +h^{m+1} \left( \|\u\|_{\infty,m+1} + \|\partial_t \u\|_{\infty,m}
  + \|p\|_{\infty,m} \right) \Big),
 \end{align*}
 {\sfrei with $c_1(w_{\max})$ specified in Lemma~\ref{lem.domainExt}.}
 \end{theorem}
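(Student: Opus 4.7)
The plan is to run a duality argument using the semi-discretised dual problem in its Nitsche form~\eqref{semidiscDualNitsche}, for which the stability estimate of Lemma~\ref{lem.StabDual} provides $H^2\times H^1$-control of $(\z_u^k,z_p^k)$ in terms of $\Delta t \sum \|\e_u^k\|^2$. First I would set the right-hand side by testing~\eqref{semidiscDualNitsche} with $\phib_u^k=\e_u^k$, $\phi_p^k=e_p^k$, which (using $\e_u^0=0$) yields
\begin{align*}
\Delta t \sum_{k=1}^n \|\e_u^k\|_{\Omega^k}^2
= \sum_{k=1}^n (\e_u^k-\e_u^{k-1},\z_u^k)_{\Omega^k}
+ \Delta t \sum_{k=1}^n ({\cal A}_S^k+a_D^k)(\e_u^k,e_p^k;\z_u^k,z_p^k).
\end{align*}
Note that $\e_u^k\in H^1(\Omega^k)^d$ and $e_p^k\in L^2(\Omega^k)$, so this is a valid test, with the Nitsche terms activated by the non-vanishing trace of $\e_u^k$ on $\partial\Omega^k$.

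Next I would insert $\pm (I_h^k \z_u^k, i_h^k z_p^k)$ on the right, split each contribution into an interpolation part (based on $\zetab_u^k := \z_u^k-I_h^k\z_u^k$, $\zeta_p^k := z_p^k - i_h^k z_p^k$) and a discrete part, and use the primal Galerkin orthogonality~\eqref{Galerkin} with $\v_h = I_h^k\z_u^k$, $q_h = i_h^k z_p^k$ to eliminate the discrete part. What remains is
\begin{align*}
\Delta t \sum_k \|\e_u^k\|_{\Omega^k}^2
&= \sum_k (\e_u^k-\e_u^{k-1},\zetab_u^k)_{\Omega^k}
+ \Delta t \sum_k ({\cal A}_S^k+a_D^k)(\e_u^k,e_p^k;\zetab_u^k,\zeta_p^k) \\
&\quad + \Delta t \sum_k \bigl[{\cal E}_c^k(I_h^k\z_u^k,i_h^k z_p^k) - \gamma_g g_h^k(\e_u^k,I_h^k\z_u^k) - \gamma_p s_h^k(\xi_{h,p}^k,i_h^k z_p^k)\bigr].
\end{align*}
For the bilinear-form term I would invoke Lemma~\ref{lem.contCont}, bound $\|\zetab_u^k\|_{H^l}\lesssim h^{2-l}\|\z_u^k\|_{H^2}$, $\|\zeta_p^k\|\lesssim h\|z_p^k\|_{H^1}$ and their boundary analogues via~\eqref{interpol1}--\eqref{interpolTrace}, apply Cauchy--Schwarz in $k$, and combine with Theorem~\ref{theo.energyerror} (for the $\e_u^k,e_p^k$-factors) and Lemma~\ref{lem.StabDual} (for the $\zetab_u^k,\zeta_p^k$-factors). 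This produces one extra factor of $h$ compared to the energy estimate, giving an $h^{m+1}$ contribution. The consistency error ${\cal E}_c^k$ is treated via Lemma~\ref{lem.consistency} (Cauchy--Schwarz in $k$, with the $\Delta t$ time-consistency loss absorbed into the leading $\Delta t\|\partial_t^2\u\|_Q$ term), and the ghost-penalty and pressure-stabilisation terms are controlled by weak consistency together with~\eqref{ghbound}, \eqref{stabass}, giving $h^{m+1}\|\z_u\|_{H^2}$ and $h^{m+1}\|z_p\|_{H^1}$ respectively; in all cases Lemma~\ref{lem.StabDual} closes the argument in the ``dual'' factor.

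The hard part will be the time-derivative residual $\sum_k(\e_u^k-\e_u^{k-1},\zetab_u^k)_{\Omega^k}$, which in the fixed-domain case is handled by summation by parts into $-\sum (\zetab_u^k-\zetab_u^{k-1},\e_u^{k-1})$. Here both the test space and the measure depend on $k$, so the reshuffling produces residuals on the strip $S_\delta^{k}$. After the discrete integration by parts
\begin{align*}
\sum_{k=1}^n(\e_u^k-\e_u^{k-1},\zetab_u^k)_{\Omega^k}
= (\e_u^n,\zetab_u^n)_{\Omega^n} + \sum_{k=1}^{n-1} (\e_u^k,\zetab_u^k-\zetab_u^{k+1})_{\Omega^k} + \text{strip terms},
\end{align*}
the volumetric term is controlled by $\|\e_u^k\|_{\Omega^k}\,\|\zetab_u^k-\zetab_u^{k+1}\|_{\Omega^k}\lesssim h^2 \|\e_u^k\|_{\Omega^k}\|\z_u^k-\z_u^{k+1}\|_{H^2}$ combined with $\Delta t^{-1}\|\z_u^k-\z_u^{k+1}\|_{\Omega^k}^2$ from Lemma~\ref{lem.StabDual}, while the strip contributions are tamed exactly as in the proof of Lemma~\ref{lem.StabDual} via~\eqref{dualSdelta}, i.e.\ by a Poincar\'e-type estimate using $\z_u|_{\partial\Omega}=0$ and the bound $\delta\leq c_\delta w_{\max}\Delta t$ from~\eqref{defDelta}; this is where the factor $w_{\max}$ in the statement appears. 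Combining all contributions with Theorem~\ref{theo.energyerror}, the inverse CFL $\Delta t\geq ch^2$ (needed to absorb $\Delta t\,h^{-2}\|\e_u^k-\e_u^{k-1}\|^2$ by the energy estimate when writing the discrete time derivative in strong form), and a discrete Gronwall argument yields the claimed bound.
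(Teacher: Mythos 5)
Your proposal follows the paper's proof almost verbatim in its skeleton: the same semi-discrete dual problem in Nitsche form \eqref{semidiscDualNitsche} tested with $(\e_u^k,e_p^k)$, the same insertion of the interpolants $I_h^k\z_u^k$, $i_h^k z_p^k$ via the primal Galerkin orthogonality \eqref{Galerkin}, the same use of Lemma~\ref{lem.contCont} plus interpolation to gain one power of $h$ from the $H^2\times H^1$ regularity of the dual solution, and the same closing combination of Theorem~\ref{theo.energyerror} and Lemma~\ref{lem.StabDual}. The one place where you depart from the paper, the discrete time-derivative residual $\sum_k(\e_u^k-\e_u^{k-1},\etab_{z,u}^k)_{\Omega^k}$, is also where your argument has a genuine gap. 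The paper does not sum by parts here: it bounds $(\e_u^k-\e_u^{k-1},\etab_{z,u}^k)_{\Omega^k}\le ch^2\|\e_u^k-\e_u^{k-1}\|_{\Omega^k}\|\nabla^2\z_u^k\|_{\Omega^k}$ and uses $h^2\le c\Delta t$ to rewrite this as $ch\|\e_u^k-\e_u^{k-1}\|_{\Omega^k}\Delta t^{1/2}\|\nabla^2\z_u^k\|_{\Omega^k}$ (see \eqref{L2DiscTime}); after Cauchy--Schwarz in $k$ the first factor, $\bigl(\sum_k\|\e_u^k-\e_u^{k-1}\|_{\Omega^k}^2\bigr)^{1/2}$, is bounded purely by data through the dissipation term of Theorem~\ref{theo.energyerror}, while the second factor is the $\Delta t$-weighted $H^2$-norm from Lemma~\ref{lem.StabDual}, so only one power of the unknown $\bigl(\Delta t\sum_k\|\e_u^k\|_{\Omega^k}^2\bigr)^{1/2}$ ever appears and nothing needs to be absorbed. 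This is precisely the role of the inverse CFL condition at this stage (beyond its use inside Theorem~\ref{theo.energyerror}), and no Gronwall argument is required here.

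Your summation-by-parts alternative does not close as written. The volumetric bound $\|\etab_{z,u}^k-\etab_{z,u}^{k+1}\|_{\Omega^k}\le ch^2\|\z_u^k-\z_u^{k+1}\|_{H^2(\Omega^k)}$ needs the $H^2$-norm of the increment, which Lemma~\ref{lem.StabDual} does not provide; the lemma controls only $\Delta t^{-1}\|\z_u^k-\z_u^{k+1}\|_{\Omega^k}^2$ and $\|\nabla(\z_u^k-\z_u^{k+1})\|_{\Omega^k}^2$, so the pairing you state (an $h^2$-weighted $H^2$-increment estimated by the $L^2$-increment control) is internally inconsistent. If you retreat to the first-order estimate $\|\etab_{z,u}^k-\etab_{z,u}^{k+1}\|_{\Omega^k}\le ch\|\nabla(\z_u^k-\z_u^{k+1})\|_{\Omega^k}$ and use the available $H^1$-increment control, the resulting term is of size $c\,h\,w_{\max}\Delta t^{-1/2}\,\Delta t\sum_k\|\e_u^k\|_{\Omega^k}^2$, i.e.\ a non-small constant multiple of the left-hand side under the hypothesis ``$\Delta t\ge ch^2$ for some $c>0$''; absorbing it would force an extra smallness condition tying $h^2/\Delta t$ to $w_{\max}$ that is not part of the theorem. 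Moreover, after the index shift the strip contributions carry $\e_u^k$ on $S_\delta^k$, and the Poincar\'e estimate \eqref{dualSdelta} you invoke relies on the homogeneous trace of the dual velocity; $\e_u^k$ does not vanish on $\partial\Omega^k$ (the boundary condition is only imposed weakly via Nitsche), so these terms must be treated with Lemma~\ref{lem.domainExt} instead. Replacing your treatment of this single term by the paper's direct estimate repairs the argument; the remaining ingredients of your plan (continuity, time consistency, ghost-penalty and pressure-stabilisation terms exploiting the $H^2\times H^1$ dual regularity) coincide with the paper's.
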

\begin{proof}
 We test \eqref{semidiscDualNitsche} with $\phib_u^k = \e_u^k, \phi_p^k = e_p^k, k=0,\dots,n$ to get
 \begin{align*}
   \Delta t \sum_{k=1}^n \|\e_u^k\|_{\Omega^k}^2 = \sum_{k=1}^{n} &\left\{ (\e_u^k - \e_u^{k-1}), \z_u^k)_{\Omega^k} + \Delta t \left( {\cal A}_S^k + a_D^k\right)(\e_u^k, e_p^k; \z_u^k, z_p^k) \right\}
 + (\e_u^0, \z_u^1)_{\Omega^1}.
 \end{align*}
 We define 
 \begin{align*}
  \etab_{z,u}^k := \z_u^k - I_h^k \z_u^k, \quad  \eta_{z,p}^k := z_p^k - i_h^k z_p^k
 \end{align*}
  and use Galerkin orthogonality to insert the interpolants $I_h^k \z_u^k$ and $i_h^k z_p^k$
 \begin{eqnarray}
 \begin{aligned}\label{startL2}
  \Delta t \sum_{k=1}^n \|\e_u^k\|_{\Omega^k}^2 &= \sum_{k=1}^{n} \big\{ (\e_u^k - \e_u^{k-1}), \etab_{z,u}^k)_{\Omega^k} + \Delta t \left( {\cal A}_S^k + a_D^k\right)(\e_u^k, e_p^k; \etab_{z,u}^k, \eta_{z,p}^k)\big\}
 \\
  &\quad+\Delta t 
  \sum_{k=1}^n  \big\{D_t^{(1)} \u(t_k) - \partial_t \u(t_k), i_h^k \z_u^k)_{\Omega^k} + \gamma_g g_h^k(\u_h^k, i_h^k \z_u^k)+ \gamma_p s_h^k(p_h^k, i_h^k z_p^k)\big\}
  \end{aligned}
  \end{eqnarray}
  We use the continuity of the bilinear form ${\cal A}_S^k+a_D^k$ (Lemma~\ref{lem.contCont}) and standard interpolation estimates
  \begin{align*}
  ({\cal A}_S^k&+a_D^k)(\e_u^k, e_p^k; \etab_{z,u}^k, \eta_{z,p}^k) 
  \leq c \left(\|\nabla \e_u^k\|_{\Omega^k} +h^{-1/2} \|\e_u^k\|_{\partial\Omega^k} + h^{1/2}\|\partial_n \e_u^k\|_{\partial\Omega^k} 
  + h\|\nabla e_p^k\|_{\Omega^k} \right) \\
 &\quad\cdot\left( \|\nabla \etab_{z,u}^k\|_{\Omega^k} + h^{-1} \|\etab_{z,u}^k\|_{\Omega^k} +  \|\eta_{z,p}^k\|_{\Omega^k} 
 + h^{-1/2} \|\etab_{z,u}^k\|_{\partial\Omega^k} + h^{1/2} \left(\|\partial_n \etab_{z,u}^k\|_{\partial\Omega^k} +\|\eta_{z,p}^k\|_{\partial\Omega^k} \right)  
 \right)\\
 &\leq ch \left(\|\nabla \e_u^k\|_{\Omega^k} +h^{-1/2} \|\e_u^k\|_{\partial\Omega^k} + h^{1/2}\|\nabla \e_u^k\|_{\partial\Omega^k} + h\|\nabla e_p^k\|_{\Omega^k} \right) \left(\|\nabla^2 \z_u^k\|_{\Omega^k} + \|\nabla z_p^k\|_{\Omega^k}\right). 
  \end{align*}
  To estimate $h^{1/2}\|\nabla \e_u^k\|_{\partial\Omega^k}$ we split into a discrete and an interpolatory part and use an inverse inequality and Lemma~\ref{lem.ghost}
  \begin{eqnarray}
  \begin{aligned}\label{inverseNitsche}
   h^{1/2}\|\nabla \e_u^k\|_{\partial\Omega^k} \leq h^{1/2}\left(\|\nabla \etab_u^k\|_{\partial\Omega^k} + \|\nabla \xib_{h,u}^k\|_{\partial\Omega^k}\right)
   &\leq ch^{m} \|\u\|_{H^{m+1}(\Omega^k)} + c \|\nabla \xib_{h,u}^k\|_{\Omega_h^k}\\
   &\leq ch^{m} \|\u\|_{H^{m+1}(\Omega^k)} + c  \|\nabla \e_u^k\|_{\Omega_h^k}\\
    &\leq ch^{m} \|\u\|_{H^{m+1}(\Omega^k)} + c  |||\e_u^k|||_{h,k}.
  \end{aligned}
  \end{eqnarray}
  This yields
  \begin{align*}
   ({\cal A}_S^k&+a_D^k)(\e_u^k, e_p^k; \etab_{z,u}^k, \eta_{z,p}^k) \\
   &\leq ch\left( |||\e_u^k|||_{h,k}  + h\|\nabla e_p^k\|_{\Omega^k} + h^{m} \|\u^k\|_{H^{m+1}(\Omega^k)}\right) 
   \left(\|\nabla^2 \z_u^k\|_{\Omega^k} + \|\nabla z_p^k\|_{\Omega^k}\right).
  \end{align*}
    For the consistency error of the time derivative on the right-hand side of \eqref{startL2}, we obtain as in Lemma~\ref{lem.consistency}
  \begin{align*}
   \Big|  \left(D_t^{(1)} \u(t_k) - \partial_t \u(t_k), i_h^k \z_u^k\right)_{\Omega^n}\Big| \leq c \Delta t \|\partial_t^2 \u \|_{Q^k} \|\z_u^k\|_{\Omega^k}.
  \end{align*}
 For the ghost penalty we insert $\pm \z_u^k$ and $\pm \u(t_k)$ and use Lemma~\ref{lem.ghost} as well as standard estimates for the interpolation
 \begin{align*}
  g_h(\u_h^k, i_h^k \z_u^k) &= g_h(\e_u^k, \etab_{z,u}^k) - g_h(\u(t_k), \etab_{z,u}^k) - g_h(\e_u^k, \z_u^k) + g_h(u(t_k), \z_u^k) \\
   &\leq  ch \left( g_h^k (\e_u^k, \e_u^k)^{1/2} + h^{m} \|\u\|_{H^{m+1}(\Omega^k)}\right) \|\z_u^k\|_{H^2(\Omega^k)}.
 \end{align*}
 For the pressure stabilisation we distinguish between the cases $m=1$ and $m>1$, the latter implying by assumption that $p^k\in H^2(\Omega^k)$. For $m=1$, the following estimate is optimal
 \begin{align*}
  s_h^k(p_h^k, i_h^k z_p^k) \leq c h^2 \|\nabla p_h^k\|_{\Omega^k} \|\nabla z_p^k\|_{\Omega^k} \leq c h^2 \left(\|\nabla p^k\|_{\Omega^k} + \|\nabla e_p^k\|\right) \|\nabla z_p^k\|_{\Omega^k}  .
 \end{align*}
 For $m>1$ we insert $\pm p^k$ and use \eqref{stabass}
 \begin{align*}
  s_h^k(p_h^k, i_h^k z_p^k) = -s_h^k(e_p^k, i_h^k z_p^k) + s_h^k(p^k, i_h^k z_p^k) \leq ch \left( h\|\nabla e_p^k\|_{\Omega^k} + h^{m} \| p^k\|_{H^{m}(\Omega^k)}\right) \|\nabla z_p^k\|_{\Omega^k}.
 \end{align*}  
  It remains to estimate the terms corresponding to the discrete time derivative in \eqref{startL2}. We use a standard interpolation estimate and the inverse
  CFL condition $h^2 \leq c\Delta t$ to get
  \begin{align}\label{L2DiscTime}
    (\e_u^k - \e_u^{k-1}, \etab_{z,u}^k)_{\Omega^k}  &\leq ch^2 \|\e_u^k - \e_u^{k-1}\|_{\Omega^k} \|\nabla^2 \z_u^k\|_{\Omega^k}
    \leq c h \|\e_u^k - \e_u^{k-1}\|_{\Omega^k} \Delta t^{1/2} \|\nabla^2 \z_u^k\|_{\Omega^k}.
  \end{align}
By combining the above estimates, we have from \eqref{startL2}
\begin{eqnarray}
\begin{aligned}\label{final}
 \Delta t &\sum_{k=1}^n \|\e_u^k\|_{\Omega^k}^2
 \leq c\Delta t h \sum_{k=1}^n \big\{\big( |||\e_u^k|||_{k,h} + h\|\nabla e_p^k\|_{\Omega^k}
 + h^{m} \left( \|\u^k\|_{H^{m+1}(\Omega^k)} + \|p^k\|_{H^{m}(\Omega^k)} \right) \big)  \\
 &\qquad\qquad\qquad\qquad\cdot\left(\|\z_u^k\|_{H^2(\Omega^k)} + \|\nabla z_p^k\|_{\Omega^k} \right)\big\}\\
 &\qquad\qquad\qquad+c\Delta t^2 \sum_{k=1}^n \left\{ \|\partial_t^2 \u \|_{Q^k} \|\z_u^k\|_{\Omega^k}\right\}
 +ch \sum_{k=1}^{n-1} \left\{\|\e_u^k - \e_u^{k-1}\|_{\Omega^k} \Delta t^{1/2}\|\nabla^2 \z_u^k\|_{\Omega^k}\right\}\\
 &\leq ch^2 \Bigg( \sum_{k=1}^n \|\e_u^k-\e_u^{k-1}\|_{\Omega^k}^2 + \Delta t \Big\{ |||\e_u^k|||_{k,h}^2 + h^2\|\nabla e_p^k\|_{\Omega^k}^2 + \Delta t^2\|\partial_t^2 \u \|_{Q^k}^2\\
 &\quad\;\;+ h^{2m} \left( \|\u^k\|_{H^{m+1}(\Omega^k)}^2 + \|p^k\|_{H^{m}(\Omega^k)}^2 
 \right) \Big\}\Bigg)^{1/2} \left( \Delta t \sum_{k=1}^n \|\z_u^k\|_{H^2(\Omega^k)}^2 + \|\nabla z_p^k\|_{\Omega^k}^2\right)^{1/2}. 
\end{aligned}
\end{eqnarray}
The last inequality follows by the Cauchy-Schwarz inequality. Now, the statement follows from Theorem~\ref{theo.energyerror} and Lemma~\ref{lem.StabDual}.

\end{proof}

\begin{remark}
 An analogous result can be shown for the BDF(2) variant under slightly stronger conditions. For $\Delta t\geq ch$, which is needed for the energy estimate,
the following estimate can be shown
 \begin{eqnarray}
\begin{aligned}\label{L2s2}
  &\left(\Delta t \sum_{k=1}^n \|\e_u^k\|_{\Omega^k}^2\right)^{1/2} \\
  &\hspace{1cm}\leq c {\sfrei w_{\max}} \exp({\sfrei c_1(w_{\max})} t_n) \Big( \Delta t^2 \|\partial_t^{3} \u\|_{Q} +h^{m+1} \left( \|\u\|_{\infty,m+1} + \|\partial_t \u\|_{\infty,m}
  + \|p\|_{\infty,m} \right) \Big).
\end{aligned}
\end{eqnarray}
 The main difference in the proof is that the energy norm estimate does not give a bound for $\Delta t \|D_t^{(2)} \e_u^k\|_{\Omega^k}$, see Remark~\ref{rem.energys}. 
 We have using \eqref{deltaeps} {\sfrei with $\epsilon=1$}
 \begin{align*}
   \Delta t (D_t^{(2)} \e_u^k, \etab_{z,u}^k)_{\Omega^k} &\leq ch^2 \sum_{i=0}^{2} \|\e_u^{k+i}\|_{\Omega^k} \|\nabla^2 \z_u^k\|_{\Omega^k}
   \leq ch^2 \sum_{i=0}^{2} \left( \|\e_u^{k+i}\|_{\Omega^{k+i}}  + \|\e_u^{k+i}\|_{S_\delta^k}\right) \|\nabla^2 \z_u^k\|_{\Omega^k} \\
   &\leq c{\sfrei \Delta t^2} \sum_{i=0}^{2} \left( \|\e_u^{k+i}\|_{\Omega^{k+i}} + h \|\nabla \e_u^{k+i}\|_{\Omega^{k+i}}\right) \|\nabla^2 \z_u^k\|_{\Omega^k}.
 \end{align*}
The $L^2$-term on the right-hand side can then be absorbed into the left-hand side of \eqref{final} to obtain \eqref{L2s2}.
 \end{remark}

\section{Numerical example}
\label{sec.num}

To substantiate the theoretical findings, we present numerical results for polynomial degrees $m=1,2$ and BDF formulas of order $s=1,2$. 
The results have been obtained using the 
CutFEM library~\cite{CutFEM2015}, which is based on FEniCS~\cite{FeNiCS}.

We consider flow through a 3-dimensional rectangular channel with a moving upper and lower wall {\sfrei in the time interval is $I=[0,2]$.}
The moving domain is given by
\begin{align*}
 \Omega(t) = (0,4) \times \left(-1+\frac{\sin(t)}{10}, 1-\frac{\sin(t)}{10}\right) \times (-1,1).
 \end{align*}
{\sfrei Due to the simple polygonal structure of the domain $\Omega(t)$, the integrals in~\eqref{completeSpaceTime} are evaluated exactly within the CutFEM library~\cite{CutFEM2015} and we can expect higher-order convergence in space for $m\geq 2$.}  
 
The data $\f$ and $\u^D$ is chosen in such a way that the manufactured solution
\begin{align*}
 \u(x,y,z;t) &= \left(\sin(t)\cdot\Big((1-\frac{\sin(t)}{10})^2-y^2\Big)(1-z^2), 0, 0\right),\quad
 p(x,y,z;t) = \sin(t)\cdot(8 - 2x)
\end{align*}
solves the system \eqref{Stokes}. We impose the corresponding Dirichlet boundary conditions $\u^D$ on the left \textit{inflow} boundary (given by $x=0$), 
a \textit{do-nothing} boundary condition $\partial_n \u - p \n =0$ on the right \textit{outflow} boundary (given by $x=4$) and
no-slip boundary conditions on the remaining boundary parts, including the moving upper and lower boundary. The initial value is homogeneous $\u^0(x) = 0$. 
We choose a Nitsche parameter $\gamma_D=500$, stabilisation parameters $\gamma_g=\gamma_p=10^{-3}$ and 
$\delta=w_{\rm max} s \Delta t$, where {\sfrei $w_{\rm max}=\underset{t\in I, x \in \partial\Omega(t)}{\max} \|\partial_t \T\cdot \n\| = 0.1$}. 
{\sfrei The background triangulations ${\cal T}_h$ are constructed from}
a uniform subdivision of the 
box $[0,4]\times[-1.1, 1.1] \times [-1, 1]$ {\sfrei into hexahedra and a subsequent split of each of the hexahedral elements into 6 tetrahedra. These background triangulations 
are then reduced in each time-step by eliminating} those elements that lie outside of $\Omega_{\delta}^n$.

\subsection{$P_1$ - BDF(1)}

First, we use $P_1$ finite elements ($m=1$) and the BDF(1) variant ($s=1$). 
The computed errors $\| \u-\u_h\|_{\Omega}, \|\nabla (\u-\u_h)\|_{\Omega}, \|p^k-p_h^k\|_{\Omega}$ 
and $\|\nabla (p^k-p_h^k)\|_{\Omega}$ are 
plotted over time in Figure~\ref{fig:stokes} for {\sfrei $\Delta t=0.8 h$}, where each of the norms has been normalised by 
the $L^{\infty}(L^2)$-norm of the respective continuous functions, e.g.$\,\| \u-\u_h\|_{\Omega}/\|\u\|_{\infty,0,I}$. 
We observe convergence in all norms for all times as 
${\sfrei \Delta t=0.8 h\to 0}$. Moreover, no oscillations are visible in any of the norms. While the error bounds shown in 
the previous sections include an exponential growth in time, coming from the application of Gronwall's lemma, the error 
does not accumulate significantly over time in the numerical results presented here.
 \begin{figure}[t]
\centering
\includegraphics[width=0.48\textwidth]{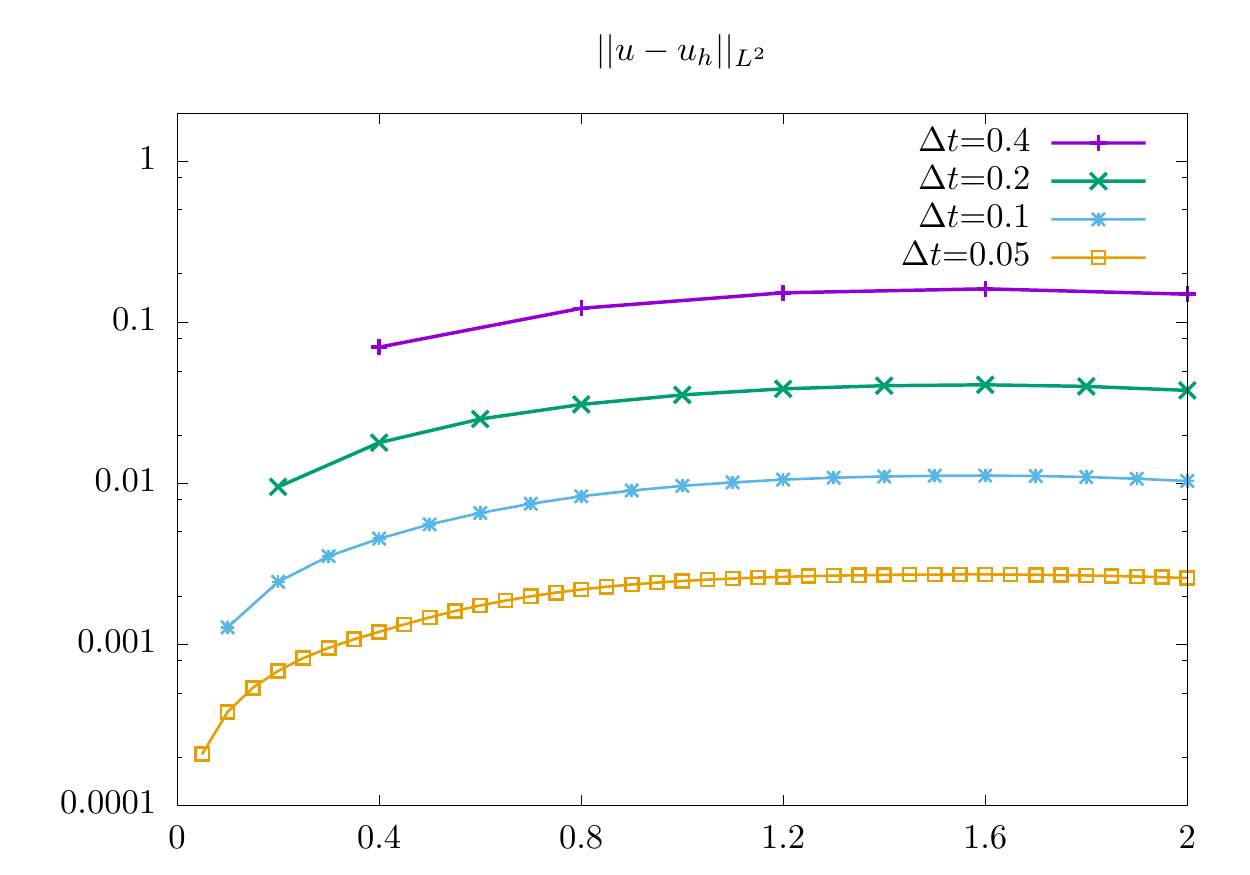}
\includegraphics[width=0.48\textwidth]{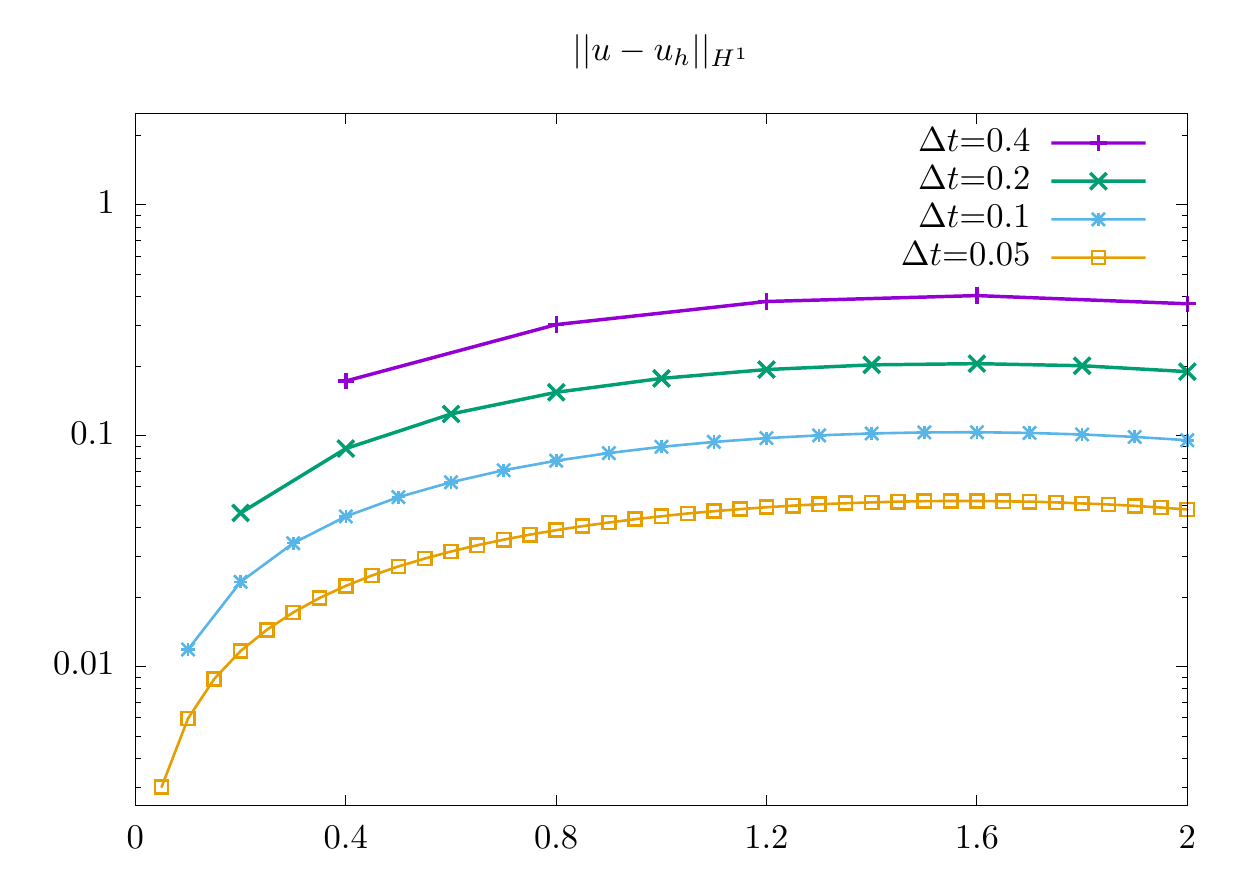}\\
 \includegraphics[width=0.48\textwidth]{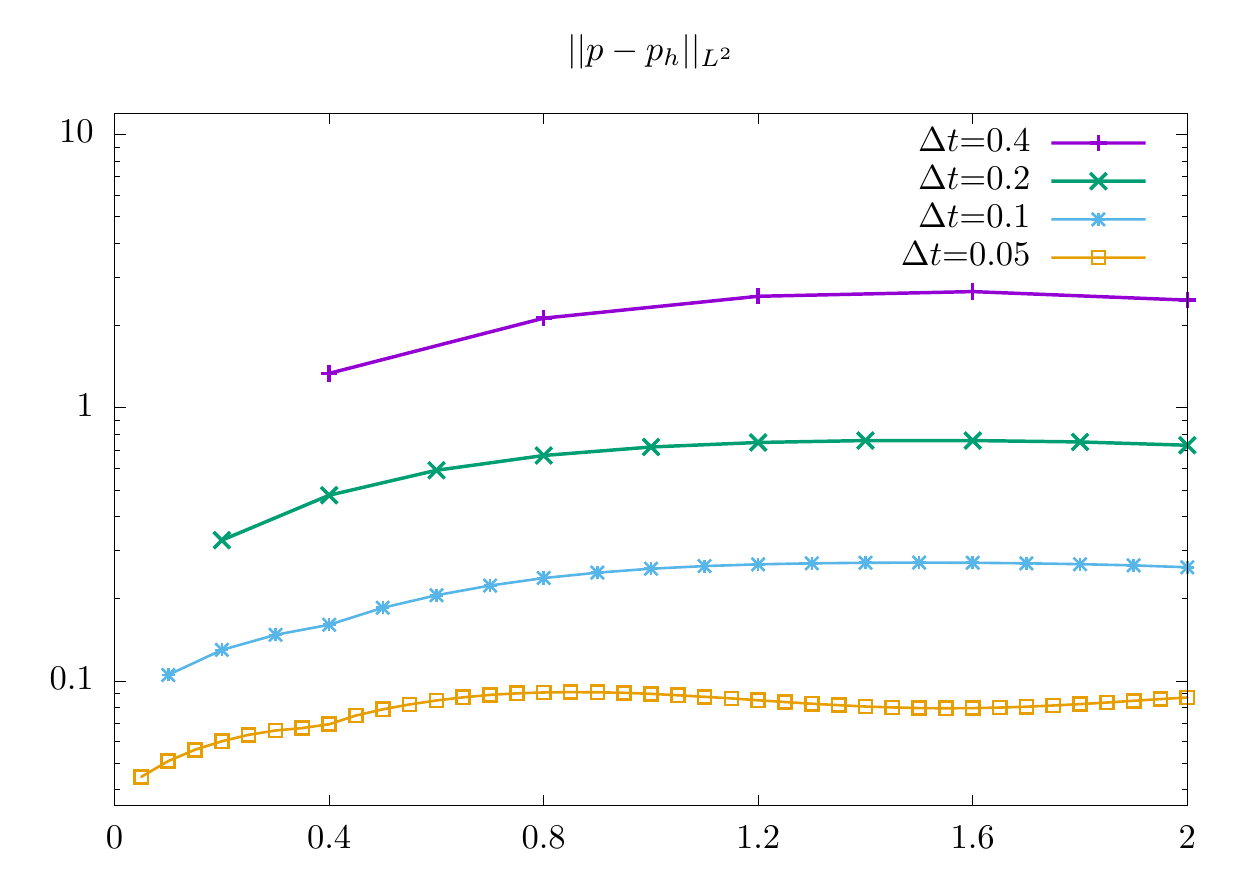} 
 \includegraphics[width=0.48\textwidth]{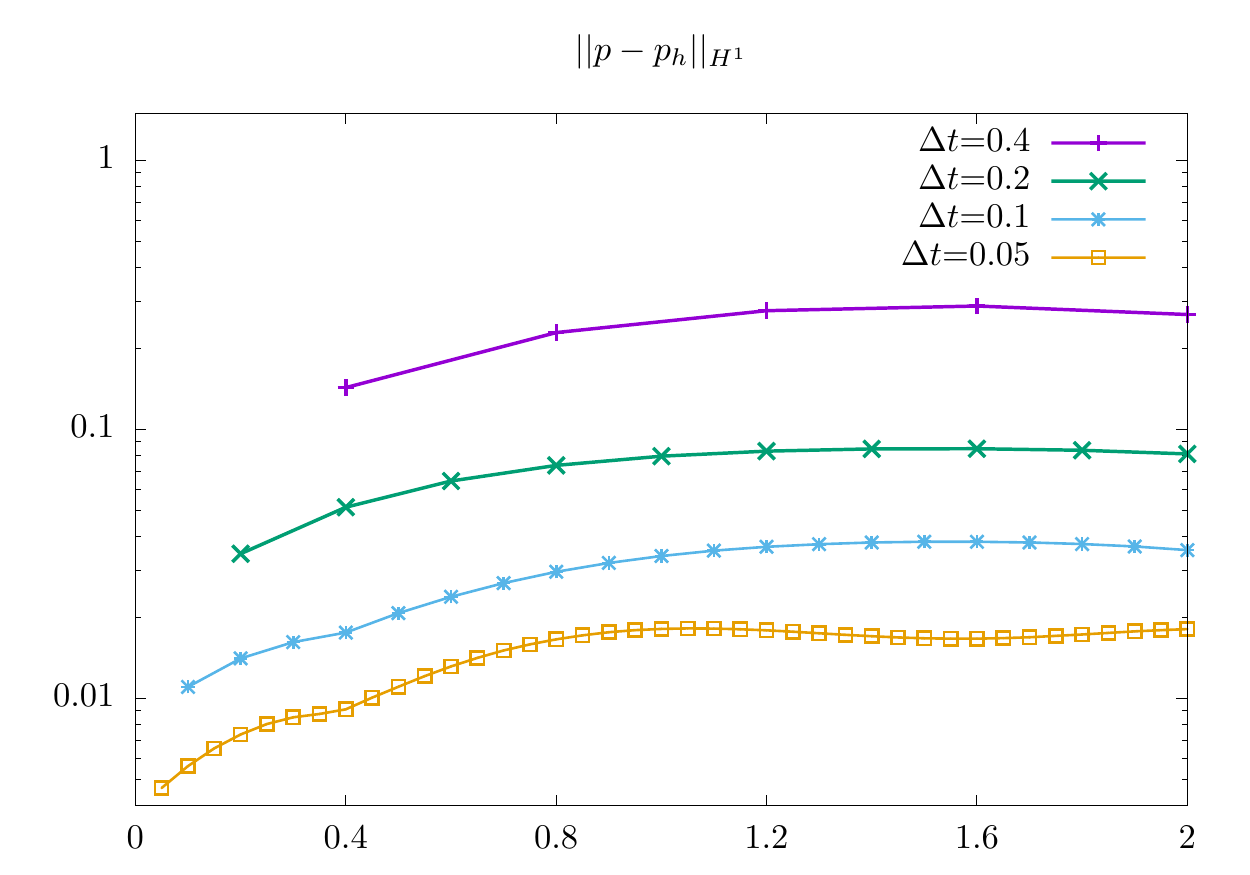}
 \caption{\label{fig:stokes} Top: $L^2$- and $H^1$-norm error of velocity, bottom: $L^2$-and $H^1$ norm error of pressure over time for 
 different time-step sizes and mesh levels, where ${\sfrei \Delta t=0.8 h}$. All the norms are normalised by the maximum (in time) of the respective norm 
 of the continuous functions.}
\end{figure}
To study the convergence orders in space and time, we show values for four different time-step and four different mesh sizes
in Table~\ref{tab:Stokes}. For $P_1$ finite elements, the finest mesh 
contains approximately 143.000 degrees of freedom. We observe that 
the temporal error is barely visible in the $L^2(L^2)$-norm and $L^2(H^1)$-semi-norm of velocities, as the spatial error is dominant. 
The spatial component of the velocities converges as expected by 
the theory (Theorems~\ref{theo.energyerror} and Theorem~\ref{theo.L2error}) with orders 2 and 1.

On the other hand, the temporal error shows up clearly in the pressure norms. To compute 
an \textit{estimated order of convergence} (eoc), let us assume that the overall error can be separated into 
a temporal and a spatial component
\begin{align}\label{splitError}
 g(\Delta t,h) = g_{\Delta t} (\Delta t) + g_h(h) = c_h h^{\rm {\rm eoc}_h} + c_{\Delta t} \Delta t^{{\rm eoc}_{\Delta t}}.
\end{align}
To estimate for instance the temporal order of convergence ${\rm eoc}_{\Delta t}$, we fit the three parameters $g_h, c_{\Delta t}$ and ${\rm eoc}_{\Delta t}$ 
of the function  
\begin{align*}
 g(\Delta t,\cdot) = g_h + c_{\Delta t} \Delta t^{\rm eoc_{\Delta t}}
\end{align*}
for a fixed mesh size {\sfrei $h\in \{\frac{1}{2}, \frac{1}{4}, \frac{1}{8}, \frac{1}{16}\}$} against the computed values. This is done by means of a least-squares fit using gnuplot~\cite{gnuplot}. {\sfrei The values for $g_h$ and eoc$_{\Delta t}$ in the first row are for example computed by fitting the previous values in the same row (i.e. those obtained with $h=\frac{1}{2}$ for different time-step sizes).}
A spatial order of convergence ${\rm eoc}_h$ is estimated similarly using the values 
for a fixed time-step size $\Delta t \,{\sfrei \in \{0.4, 0.2, 0.1, 0.05\}}${\sfrei , i.e. those in the same column.}

For the pressure norms the estimated temporal order of convergence is very close to 1 in both the $L^2$- and the $H^1$ semi-norm. 
This is expected for the $L^2(H^1)$-semi-norm by Theorem~\ref{theo.energyerror},
but better than proven in Lemma~\ref{cor.pressest} for the $L^2(L^2)$-norm. The spatial component of the error 
converges much faster than expected with ${\rm eoc}_h$ around 2 for both norms (compared to ${\cal O}(1)$,
which has been shown for the $H^1$-semi-norm, and ${\cal O}(h)$ for the $L^2$-norm). This might be due to superconvergence effects, 
as frequently observed for CIP stabilisations (see e.g.~\cite{Frei2019}), 
and possibly due to the sub-optimality of the pressure estimates.

The convergence orders of both pressure norms 
are very similar, especially for larger $h$ and $\Delta t$. Here it seems that  
due to the superconvergence of the $L^2(H^1)$-semi-norm the simple Poincar\'e estimate
\begin{align*}
  \|e_p^k\|_{\Omega^k} \leq c_P \|\nabla e_p^k\|_{\Omega^k}
 \end{align*}
is optimal for the $L^2(L^2)$-norm. Only for smaller $\Delta t$ and $h$, the convergence of the $L^2(L^2)$-norm seems to be slightly faster 
compared to the $L^2(H^1)$-semi-norm.


\begin{table}[t]
\small
  \begin{center}
    \begin{tabular}{lcccc|cc}
      \multicolumn{7}{c}{$\|\u-\u_{kh}\|_Q / \|\u\|_Q $} \\[0.1cm]
      \multicolumn{1}{l}{$h\shortdownarrow \hspace{0.01cm}\backslash \hspace{0.13cm} \Delta t\shortrightarrow\hspace{-0.7cm}$} & $0.4$ & $0.2$ & $0.1$& \multicolumn{1}{c}{$0.05$} &$g_h$ &${\red {\rm eoc}_{\Delta t}}$\\
      \midrule
      $1/2$ &$1.60\cdot {\bf 10^{-1}}$ &$1.60\cdot {\bf 10^{-1}}$ &$1.60\cdot {\bf 10^{-1}}$ &$1.57\cdot {\bf 10^{-1}}$&$1.57\cdot {\bf 10^{-1}}$ &- \\
      $1/4$ &$4.07\cdot {\bf 10^{-2}}$ &$4.05\cdot {\bf 10^{-2}}$ &$4.05\cdot {\bf 10^{-2}}$ &$4.05\cdot {\bf 10^{-2}}$&$4.04\cdot {\bf 10^{-2}}$ &{\red 1.16}  \\
      $1/8$ &$1.14\cdot {\bf 10^{-2}}$ &$1.11\cdot {\bf 10^{-2}}$ &$1.10\cdot {\bf 10^{-2}}$ &$1.10\cdot {\bf 10^{-2}}$&$1.09\cdot {\bf 10^{-2}}$ &{\red 1.57} \\
      $1/16$&$4.25\cdot {\bf 10^{-3}}$ &$3.24\cdot {\bf 10^{-3}}$ &$2.88\cdot {\bf 10^{-3}}$ &$2.76\cdot {\bf 10^{-3}}$&$2.69\cdot {\bf 10^{-3}}$ &{\red 1.52} \\
      \midrule
      $g_{\Delta t}$ &$1.90\cdot {\bf 10^{-3}}$ &$9.52\cdot {\bf 10^{-4}}$ &$5.87\cdot {\bf 10^{-4}}$ &\multicolumn{1}{c|}{$3.02\cdot {\bf 10^{-4}}$} &0 &{\red 0.90}\\
      ${\bl {\rm eoc}_h}$ &{\bl 2.03} &{\bl 2.00} &{\bl 1.99} &\multicolumn{1}{c|}{\bl 1.96} &{\bl 1.95} &\\
      \midrule\\
      \multicolumn{7}{c}{$\|\nabla (\u-\u_{kh})\|_Q / \|\nabla \u\|_Q$} \\[0.1cm]
       \multicolumn{1}{l}{$h\shortdownarrow \hspace{0.01cm}\backslash \hspace{0.13cm} \Delta t\shortrightarrow\hspace{-0.7cm}$} & $0.4$ & $0.2$ & $0.1$& \multicolumn{1}{c}{$0.05$} &$g_h$ &${\red {\rm eoc}_{\Delta t}}$\\
       \midrule
      $1/2$  &$4.13\cdot {\bf 10^{-1}}$ &$4.14\cdot {\bf 10^{-1}}$ &$4.13\cdot {\bf 10^{-1}}$ &$4.05\cdot {\bf 10^{-1}}$ &$4.05\cdot {\bf 10^{-1}}$ &-\\
      $1/4$  &$2.05\cdot {\bf 10^{-1}}$ &$2.05\cdot {\bf 10^{-1}}$ &$2.05\cdot {\bf 10^{-1}}$ &$2.05\cdot {\bf 10^{-1}}$ &$2.05\cdot {\bf 10^{-1}}$ &{\red 1.07}\\
      $1/8$  &$1.03\cdot {\bf 10^{-1}}$ &$1.03\cdot {\bf 10^{-1}}$ &$1.03\cdot {\bf 10^{-1}}$ &$1.03\cdot {\bf 10^{-1}}$ &$1.03\cdot {\bf 10^{-1}}$ &{\red 1.75}\\
      $1/16$ &$5.20\cdot {\bf 10^{-2}}$ &$5.17\cdot {\bf 10^{-2}}$ &$5.16\cdot {\bf 10^{-2}}$ &$5.16\cdot {\bf 10^{-2}}$ &$5.16\cdot {\bf 10^{-2}}$ &{\red 1.90}\\
       \midrule
       $g_{\Delta t}$ &$2.95\cdot {\bf 10^{-3}}$ &$2.94\cdot {\bf 10^{-3}}$ &$-1.88\cdot {\bf 10^{-3}}$ &\multicolumn{1}{c|}{$-1.88\cdot {\bf 10^{-3}}$} &0 &-\\
      ${\bl {\rm eoc}_h}$ &{\bl 1.02} &{\bl 1.02} &{\bl 0.98} &\multicolumn{1}{c|}{\bl 0.98} &{\bl 0.99} &\\
      \midrule\\
       \multicolumn{7}{c}{$\|p-p_{kh}\|_Q / \|p\|_Q$} \\[0.1cm]
        \multicolumn{1}{l}{$h\shortdownarrow \hspace{0.01cm}\backslash \hspace{0.13cm} \Delta t\shortrightarrow\hspace{-0.7cm}$} & $0.4$ & $0.2$ & $0.1$& \multicolumn{1}{c}{$0.05$} &$g_h$ &${\red {\rm eoc}_{\Delta t}}$\\
       \midrule
      $1/2$ &$2.83\cdot {\bf 10^{-1}}$ &$2.61\cdot {\bf 10^{-1}}$ &$2.49\cdot {\bf 10^{-1}}$ &$2.46\cdot {\bf 10^{-1}}$ &$2.42\cdot {\bf 10^{-1}}$ &{\red 1.14}\\ 
      $1/4$ &$1.08\cdot {\bf 10^{-1}}$ &$8.36\cdot {\bf 10^{-2}}$ &$7.14\cdot {\bf 10^{-2}}$ &$6.53\cdot {\bf 10^{-2}}$ &$5.92\cdot {\bf 10^{-2}}$ &{\red 1.00}\\
      $1/8$ &$6.65\cdot {\bf 10^{-2}}$ &$4.22\cdot {\bf 10^{-2}}$ &$2.98\cdot {\bf 10^{-2}}$ &$2.36\cdot {\bf 10^{-2}}$ &$1.71\cdot {\bf 10^{-2}}$ &{\red 0.98}\\
      $1/16$&$5.36\cdot {\bf 10^{-2}}$ &$2.93\cdot {\bf 10^{-2}}$ &$1.67\cdot {\bf 10^{-2}}$ &$1.02\cdot {\bf 10^{-2}}$ &$3.20\cdot {\bf 10^{-3}}$ &{\red 0.95}\\
      \midrule
      $g_{\Delta t}$ &$5.09\cdot {\bf 10^{-2}}$ &$2.08\cdot {\bf 10^{-2}}$ &$1.42\cdot {\bf 10^{-2}}$ &\multicolumn{1}{c|}{$6.83\cdot {\bf 10^{-3}}$} &0 &{\red 1.05}\\
      ${\bl {\rm eoc}_h}$ &{\bl 2.01} &{\bl 2.03} &{\bl 2.02} &\multicolumn{1}{c|}{\bl 2.18} &{\bl 2.01} &  \\
      \midrule\\
       \multicolumn{7}{c}{$\|\nabla (p-p_{kh})\|_Q / \|\nabla p\|_Q$} \\[0.1cm]
        \multicolumn{1}{l}{$h\shortdownarrow \hspace{0.01cm}\backslash \hspace{0.13cm} \Delta t\shortrightarrow\hspace{-0.7cm}$} & $0.4$ & $0.2$ & $0.1$& \multicolumn{1}{c}{$0.05$} &$g_h$ &${\red {\rm eoc}_{\Delta t}}$\\
        \midrule
       $1/2$ &$3.05\cdot {\bf 10^{-1}}$ &$2.82\cdot {\bf 10^{-1}}$ &$2.71\cdot {\bf 10^{-1}}$ & $2.68\cdot {\bf 10^{-1}}$ &$2.65\cdot {\bf 10^{-1}}$ &{\red 1.25}\\
       $1/4$ &$1.15\cdot {\bf 10^{-1}}$ &$9.25\cdot {\bf 10^{-2}}$ &$8.13\cdot {\bf 10^{-2}}$ & $7.58\cdot {\bf 10^{-2}}$ &$7.03\cdot {\bf 10^{-2}}$ &{\red 1.01}\\
       $1/8$ &$7.16\cdot {\bf 10^{-2}}$ &$4.97\cdot {\bf 10^{-2}}$ &$3.96\cdot {\bf 10^{-2}}$ & $3.51\cdot {\bf 10^{-2}}$ &$3.12\cdot {\bf 10^{-2}}$ &{\red 1.13}\\
       $1/16$ &$5.66\cdot {\bf 10^{-2}}$ &$3.39\cdot {\bf 10^{-2}}$ &$2.36\cdot {\bf 10^{-2}}$ & $1.94\cdot {\bf 10^{-2}}$&$1.58\cdot {\bf 10^{-2}}$ &{\red 1.18}\\
       \midrule
       $g_{\Delta t}$ &$5.44\cdot {\bf 10^{-2}}$ &$3.19\cdot {\bf 10^{-2}}$ &$2.20\cdot {\bf 10^{-2}}$ &\multicolumn{1}{c|}{$1.82\cdot {\bf 10^{-2}}$} &0 &{\red 0.61}\\
       ${\bl {\rm eoc}_h}$ &{\bl 2.03} &{\bl 2.02} &{\bl 2.04} &\multicolumn{1}{c|}{\bl 2.23} &{\bl 1.77}\\
      \bottomrule
    \end{tabular}   
  \end{center}
  \caption{\label{tab:Stokes}Errors for the fully discrete solutions for $P_1$ finite elements and BDF(1) for different mesh and time-step sizes. 
  The mesh size $h$ stands here for the minimal
  edge of the triangles and $Q=\{(x,t) \in \mathbb{R}^{d+1}, t\in I, x\in \Omega(t)\}$. Moreover, we
  show extrapolated values $g_h$ and $g_{\Delta t}$ and \textit{experimental order of convergences} ${\rm eoc}_h$ and ${\rm eoc}_{\Delta t}$
  based on a least squares fit of the four available values for fixed $h$ or $\Delta t$ against the three parameters of the 
  function $g(h)=g_{\Delta t} +c_h h^{\rm eoc_h}$ and $g(\Delta t)=g_h +c_{\Delta t} h^{{\rm eoc}_{\Delta t}}$, respectively. 
  On the bottom right of each table, we show estimated convergence orders of the extrapolated values $g_h$ and $g_\Delta t$ 
  by fitting the two parameters of the functions $g(\Delta t) = 0 + c_{\Delta t} \Delta t^{{\rm eoc}_{\Delta t}}$ and $g(h) = 0 + c_h h^{{\rm eoc}_h}$, respectively. {\sfrei A "-" in the eoc column means that the asymptotic standard error of the fit (computed by gnuplot) was $>20$\%, which means that the parameter could not be determined with a reasonable accuracy from the given values.} }
  \end{table}

\subsection{$P_2$-BDF(1)}
\label{sec.P2BDF1}

In order to increase the visibility of the temporal error component, we increase the order of the spatial discretisation first.
In Table~\ref{tab:StokesP2rect} we show results for $P_2$ finite elements and BDF(1) ($m=2, s=1$) on three different mesh levels. 
For $P_2$ the finest mesh level has again 
around 143.000 degrees of freedom, which is similar to $P_1$ elements on the next-finer mesh level.
Again the spatial error is dominant in the velocity norms on coarser meshes and 
shows convergence orders of approximately 3 in the $L^2(L^2)$-norm and 2 in the $L^2(H^1)$-semi-norm, 
as shown in Theorems~\ref{theo.L2error} and \ref{theo.energyerror}.
In contrast to $P_1$ elements, the temporal error is however visible on the finest mesh level, where ${\rm eoc}_{\Delta t}$ is close to 1, as expected.

In the $L^2(L^2)$-norm of pressure, the temporal error is dominant and shows again a convergence order of ${\cal O}(\Delta t)$. 
Due to the dominance
of the temporal component, it is less clear to deduce the spatial error contribution. From the values and the ${\rm eoc}_h$ 
it seems to converge again faster as predicted. 
{\sfrei Concerning the $L^2(H^1)$-norm of pressure, 
the assumption \eqref{splitError} that the spatial 
and temporal error are separated,
which was assumed in order to compute eoc$_{\Delta t}$ and eoc$_h$, is not valid, as the extrapolated values $g_h$ and $g_{\Delta t}$ do not
or converge only very slowly towards zero. For this reason, the computed convergence orders eoc$_{\Delta t}$ and eoc$_h$ are not meaningful in this case.} This does not contradict the theory, as
Theorem~\eqref{theo.energyerror} guarantees only the bound
\begin{align*}
 \left( \sum_{k=1}^n \Delta t \|\nabla e_p^k\|_{\Omega^k}^2\right)^{1/2} \leq {\cal O}\left(\frac{\Delta t}{h}\right) + {\cal O}(h).
\end{align*}

 \begin{table}[t]
\small
  \begin{center}
    \begin{tabular}{lcccc|cc}
      \multicolumn{7}{c}{$\|\u-\u_{kh}\|_Q / \|\u\|_Q $}\\[0.1cm]
      \multicolumn{1}{l}{$h\shortdownarrow \hspace{0.01cm}\backslash \hspace{0.13cm} \Delta t\shortrightarrow\hspace{-0.7cm}$} & $0.4$ & $0.2$ & $0.1$& \multicolumn{1}{c}{$0.05$} &$g_h$  &${\red {\rm eoc}_{\Delta t}}$ \\
      \midrule
      $1/2$ &$9.15\cdot {\bf 10^{-3}}$ &$8.92\cdot {\bf 10^{-3}}$ &$8.88\cdot {\bf 10^{-3}}$ &$8.88\cdot {\bf 10^{-3}}$ &$8.87\cdot {\bf 10^{-3}}$ &{\red 2.40}\\
      $1/4$ &$3.20\cdot {\bf 10^{-3}}$ &$1.97\cdot {\bf 10^{-3}}$ &$1.50\cdot {\bf 10^{-3}}$ &$1.38\cdot {\bf 10^{-3}}$ &$1.28\cdot {\bf 10^{-3}}$ &{\red 1.50}\\
      $1/8$ &$3.12\cdot {\bf 10^{-3}}$ &$1.62\cdot {\bf 10^{-3}}$ &$8.52\cdot {\bf 10^{-4}}$ &$4.78\cdot {\bf 10^{-4}}$ &$8.59\cdot {\bf 10^{-5}}$ &{\red 0.99}\\
      \midrule
      $g_{\Delta t}$ &$3.12\cdot {\bf 10^{-3}}$ &$1.60\cdot {\bf 10^{-3}}$ &$7.90\cdot {\bf 10^{-4}}$ & \multicolumn{1}{c|}{$3.55\cdot {\bf 10^{-4}}$} &0 &{\red 0.99} \\
      ${\bl {\rm eoc}_h}$ &{\bl 6.08} &{\bl 4.31} &{\bl 3.51} & \multicolumn{1}{c|}{\bl 3.06} &{\bl 2.82}& \\
      \midrule\\
      \multicolumn{7}{c}{$\|\nabla (\u-\u_{kh})\|_Q / \|\nabla \u\|_Q$}\\[0.1cm]
      \multicolumn{1}{l}{$h\shortdownarrow \hspace{0.01cm}\backslash \hspace{0.13cm} \Delta t\shortrightarrow\hspace{-0.7cm}$} & $0.4$ & $0.2$ & $0.1$& \multicolumn{1}{c}{$0.05$} &$g_h$ &${\red {\rm eoc}_{\Delta t}}$ \\
      \midrule
      $1/2$ &$5.55\cdot {\bf 10^{-2}}$ &$5.55\cdot {\bf 10^{-2}}$ &$5.55\cdot {\bf 10^{-2}}$ &$5.55\cdot {\bf 10^{-2}}$ &$5.55\cdot {\bf 10^{-2}}$ &-\\
      $1/4$ &$1.67\cdot {\bf 10^{-2}}$ &$1.58\cdot {\bf 10^{-2}}$ &$1.56\cdot {\bf 10^{-2}}$ &$1.56\cdot {\bf 10^{-2}}$ &$1.56\cdot {\bf 10^{-2}}$ &{\red 1.91}\\
      $1/8$ &$7.51\cdot {\bf 10^{-3}}$ &$5.18\cdot {\bf 10^{-3}}$ &$4.36\cdot {\bf 10^{-3}}$ &$4.12\cdot {\bf 10^{-3}}$ &$3.97\cdot {\bf 10^{-3}}$ &{\red 1.56}\\
       \midrule
      $g_{\Delta t}$ &$4.66\cdot {\bf 10^{-3}}$ &$1.30\cdot {\bf 10^{-3}}$ &$-4.82\cdot {\bf 10^{-5}}$ & \multicolumn{1}{c|}{$-5.17\cdot {\bf 10^{-4}}$} &0 &{\red 2.13}\\
      ${\bl {\rm eoc}_h}$ &{\bl 2.08} &{\bl 1.90} &{\bl 1.83} & \multicolumn{1}{c|}{\bl 1.80} &{\bl 1.85}&\\
      \midrule\\
       \multicolumn{7}{c}{$\|p-p_{kh}\|_Q / \|p\|_Q$}\\[0.1cm]
       \multicolumn{1}{l}{$h\shortdownarrow \hspace{0.01cm}\backslash \hspace{0.13cm} \Delta t\shortrightarrow\hspace{-0.7cm}$} & $0.4$ & $0.2$ & $0.1$& \multicolumn{1}{c}{$0.05$}  &$g_h$ &${\red {\rm eoc}_{\Delta t}}$ \\
       \midrule
      $1/2$ &$5.35\cdot {\bf 10^{-2}}$ &$2.87\cdot {\bf 10^{-2}}$ &$1.57\cdot {\bf 10^{-2}}$ &$9.20\cdot {\bf 10^{-3}}$ &$2.06\cdot {\bf 10^{-3}}$ &{\red 0.95} \\ 
      $1/4$ &$5.08\cdot {\bf 10^{-2}}$ &$2.64\cdot {\bf 10^{-2}}$ &$1.37\cdot {\bf 10^{-2}}$ &$7.18\cdot {\bf 10^{-3}}$ &$1.10\cdot {\bf 10^{-3}}$ &{\red 0.95}\\
      $1/8$ &$5.00\cdot {\bf 10^{-2}}$ &$2.57\cdot {\bf 10^{-2}}$ &$1.30\cdot {\bf 10^{-2}}$ &$6.53\cdot {\bf 10^{-3}}$ &$-5.40\cdot {\bf 10^{-4}}$ &{\red 0.95}\\
      \midrule
      $g_{\Delta t}$ &$4.97\cdot {\bf 10^{-2}}$ &$2.54\cdot {\bf 10^{-2}}$ &$1.26\cdot {\bf 10^{-2}}$ & \multicolumn{1}{c|}{$6.22\cdot {\bf 10^{-3}}$} &0 &{\red 0.99}\\
      ${\bl {\rm eoc}_h}$ &{\bl 1.75} &{\bl 1.72} &{\bl 1.51} & \multicolumn{1}{c|}{\bl 1.64} &{\bl 1.71}&\\
      \midrule\\
      \multicolumn{7}{c}{$\|\nabla (p-p_{kh})\|_Q / \|\nabla p\|_Q$}\\[0.1cm]
      \multicolumn{1}{l}{$h\shortdownarrow \hspace{0.01cm}\backslash \hspace{0.13cm} \Delta t\shortrightarrow\hspace{-0.7cm}$} & $0.4$ & $0.2$ & $0.1$& \multicolumn{1}{c}{$0.05$}  &$g_h$ &${\red {\rm eoc}_{\Delta t}}$ \\
      \midrule
      $1/2$ &$6.29\cdot {\bf 10^{-2}}$ &$4.13\cdot {\bf 10^{-2}}$ &$3.20\cdot {\bf 10^{-2}}$ & $2.83\cdot {\bf 10^{-2}}$&$2.54\cdot {\bf 10^{-2}}$ &{\red 1.24}\\
      $1/4$ &$5.43\cdot {\bf 10^{-2}}$ &$3.18\cdot {\bf 10^{-2}}$ &$2.17\cdot {\bf 10^{-2}}$ & $1.78\cdot {\bf 10^{-2}}$&$1.45\cdot {\bf 10^{-2}}$ &{\red 1.21}\\
      $1/8$ &$5.25\cdot {\bf 10^{-2}}$ &$2.99\cdot {\bf 10^{-2}}$ &$1.97\cdot {\bf 10^{-2}}$ & $1.60\cdot {\bf 10^{-2}}$&$1.27\cdot {\bf 10^{-2}}$ &{\red 1.23}\\
      \midrule
      $g_{\Delta t}$ &$5.20\cdot {\bf 10^{-2}}$ &$2.94\cdot {\bf 10^{-2}}$ &$1.92\cdot {\bf 10^{-2}}$ & \multicolumn{1}{c|}{$1.56\cdot {\bf 10^{-2}}$} & 0 &{\red 0.67}\\
      ${\bl {\rm eoc}_h}$ &{\bl 2.26} &{\bl 2.32} &{\bl 2.36} &\multicolumn{1}{c|}{\bl 2.54} & {\bl 0.57} &\\
      \bottomrule
    \end{tabular}   
  \end{center}
  \caption{\label{tab:StokesP2rect}Errors for the fully discrete solutions for $P_2$ finite elements and BDF(1) for different mesh and time-step sizes. The \textit{experimental orders of 
  convergence} (eoc) have been computed as in Table~\ref{tab:Stokes}.}
\end{table}

\subsection{$P_2$-BDF(2)}

Finally, we show results for $m=2$ and $s=2$ in Table~\ref{tab:StokesP2BDF2rect}. In order to simplify the initialisation, 
we use that the analytically given solution $\u(x,t)$ can be extended to $t<0$ and use the starting values $\u^0=0$ and $\u^{-1} := \u(-\Delta t)$ 
in the first time step. 
Due to the (expected) second-order convergence in time, 
the temporal error is barely visible in the velocity norms on the finer mesh levels, in contrast to the results for BDF(1). 
The estimated order of convergence of the spatial component lies slightly below the orders 3 and 2 in the $L^2(L^2)$-norm and 
$L^2(H^1)$-semi-norm, respectively, that have been shown analytically.

In the $L^2(L^2)$-norm of pressure both temporal and spatial errors are visible.
Both ${\rm eoc}_h$ and ${\rm eoc}_{\Delta t}$ are around 2, which has been shown in Section~\ref{sec.L2L2press} for the spatial part. For the 
temporal part only a reduced order of convergence of ${\cal O}(\Delta t)$ has been shown theoretically. This bound seems not to be sharp
in the numerical example studied here. 
In the $L^2(H^1)$-semi-norm of pressure the spatial error is dominant, which is in contrast to the BDF(1) results.  
However, the assumption \eqref{splitError} that the error allows for a separation into spatial and temporal 
error components is again not valid, {\sfrei which makes the computed values of eoc$_\Delta t$ and eoc$_h$ meaningless.}

 \begin{table}[t]
\small
  \begin{center}
    \begin{tabular}{lcccc|cc}
      \multicolumn{7}{c}{$\|\u-\u_{kh}\|_Q / \|\u\|_Q $} \\[0.1cm]
      \multicolumn{1}{l}{$h\shortdownarrow \hspace{0.01cm}\backslash \hspace{0.13cm} \Delta t\shortrightarrow\hspace{-1.5cm}$} & $0.4$ & $0.2$ & $0.1$& \multicolumn{1}{c}{$0.05$} &$g_h$ &${\red {\red {\rm eoc}_{\Delta t}}}$ \\
      \midrule
      $1/2$ &$8.91\cdot {\bf 10^{-3}}$ &$8.89\cdot {\bf 10^{-3}}$ &$8.89\cdot {\bf 10^{-3}}$ &$8.89\cdot {\bf 10^{-3}}$ &$8.89\cdot {\bf 10^{-3}}$ &{\red 3.47}\\
      $1/4$ &$1.44\cdot {\bf 10^{-3}}$ &$1.35\cdot {\bf 10^{-3}}$ &$1.35\cdot {\bf 10^{-3}}$ &$1.35\cdot {\bf 10^{-3}}$ &$1.35\cdot {\bf 10^{-3}}$ &{\red 5.04} \\
      $1/8$ &$6.38\cdot {\bf 10^{-4}}$ &$2.77\cdot {\bf 10^{-4}}$ &$2.33\cdot {\bf 10^{-4}}$ &$2.29\cdot {\bf 10^{-4}}$ &$2.28\cdot {\bf 10^{-4}}$ &{\red 3.06}\\
      \midrule
      $g_{\Delta t}$ &$5.41\cdot {\bf 10^{-4}}$ &$9.90\cdot {\bf 10^{-5}}$ &$3.87\cdot {\bf 10^{-5}}$ &\multicolumn{1}{c|}{$3.32\cdot {\bf 10^{-5}}$} &0 &{\red 2.31}\\
      {\bl \text{eoc}$_h$} &{\bl 3.22} &{\bl 2.81} &{\bl 2.75} &\multicolumn{1}{c|}{\bl 2.75} &{\bl 2.71} \\
      \midrule
      \multicolumn{7}{c}{}\\ 
      \multicolumn{7}{c}{$\|\nabla (\u-\u_{kh})\|_Q / \|\nabla \u\|_Q$}\\[0.1cm]
      \multicolumn{1}{l}{$h\shortdownarrow \hspace{0.01cm}\backslash \hspace{0.13cm} \Delta t\shortrightarrow\hspace{-0.7cm}$} & $0.4$ & $0.2$ & $0.1$& \multicolumn{1}{c}{$0.05$} &$g_h$ &${\red{\rm eoc}_{\Delta t}}$ \\
      \midrule
      $1/2$ &$5.60\cdot {\bf 10^{-2}}$ &$5.60\cdot {\bf 10^{-2}}$ &$5.60\cdot {\bf 10^{-2}}$ &$5.60\cdot {\bf 10^{-2}}$ &$5.60\cdot {\bf 10^{-2}}$ &{\red 2.74}\\ 
      $1/4$ &$1.56\cdot {\bf 10^{-2}}$ &$1.56\cdot {\bf 10^{-2}}$ &$1.56\cdot {\bf 10^{-2}}$ &$1.56\cdot {\bf 10^{-2}}$ &$1.56\cdot {\bf 10^{-2}}$ &{\red 3.60}\\
      $1/8$ &$4.23\cdot {\bf 10^{-3}}$ &$4.06\cdot {\bf 10^{-3}}$ &$4.05\cdot {\bf 10^{-3}}$ &$4.05\cdot {\bf 10^{-3}}$ &$4.05\cdot {\bf 10^{-3}}$ &{\red 3.91}\\
       \midrule
       $g_{\Delta t}$ &$-2.23\cdot {\bf 10^{-4}}$ &$-5.54\cdot {\bf 10^{-4}}$ &$-5.74\cdot {\bf 10^{-4}}$ &\multicolumn{1}{c|}{$-5.74\cdot {\bf 10^{-4}}$} &0 &-\\
       ${\bl {\rm eoc}_h}$ &{\bl 1.83} &{\bl 1.81} &{\bl 1.81} &\multicolumn{1}{c|}{\bl 1.81} &{\bl 1.86} & \\
      \midrule\\ 
       \multicolumn{7}{c}{$\|p-p_{kh}\|_Q / \|p\|_Q$}\\[0.1cm]
       \multicolumn{1}{l}{$h\shortdownarrow \hspace{0.01cm}\backslash \hspace{0.13cm} \Delta t\shortrightarrow\hspace{-0.7cm}$} & $0.4$ & $0.2$ & $0.1$& \multicolumn{1}{c}{$0.05$} &$g_h$ &${\red {\rm eoc}_{\Delta t}}$ \\
       \midrule
      $1/2$ &$7.78\cdot {\bf 10^{-3}}$ &$3.32\cdot {\bf 10^{-3}}$ &$2.96\cdot {\bf 10^{-3}}$ &$2.95\cdot {\bf 10^{-3}}$ &$2.94\cdot {\bf 10^{-3}}$ &{\red 3.67}\\ 
      $1/4$ &$7.27\cdot {\bf 10^{-3}}$ &$1.86\cdot {\bf 10^{-3}}$ &$9.41\cdot {\bf 10^{-4}}$ &$8.91\cdot {\bf 10^{-4}}$ &$8.29\cdot {\bf 10^{-4}}$ &{\red 2.66}\\
      $1/8$ &$7.27\cdot {\bf 10^{-3}}$ &$1.79\cdot {\bf 10^{-3}}$ &$5.34\cdot {\bf 10^{-4}}$ &$3.28\cdot {\bf 10^{-4}}$ &$2.28\cdot {\bf 10^{-4}}$ &{\red 2.18}\\
      \midrule
      $g_{\Delta t}$ &$7.27\cdot {\bf 10^{-3}}$ &$1.79\cdot {\bf 10^{-3}}$ &$4.31\cdot {\bf 10^{-4}}$ &\multicolumn{1}{c|}{$1.16\cdot {\bf 10^{-4}}$} &0 &{\red 1.97}\\
      ${\bl {\rm eoc}_h}$ &{\bl 10.01} &{\bl 4.38} &{\bl 2.31} &\multicolumn{1}{c|}{\bl 1.87} &{\bl 1.83} & \\
      \midrule\\ 
      \multicolumn{7}{c}{$\|\nabla (p-p_{kh})\|_Q / \|\nabla p\|_Q$}\\[0.1cm]
      \multicolumn{1}{l}{$h\shortdownarrow \hspace{0.01cm}\backslash \hspace{0.13cm} \Delta t\shortrightarrow\hspace{-0.7cm}$} & $0.4$ & $0.2$ & $0.1$& \multicolumn{1}{c}{$0.05$} &$g_h$ &${\red {\rm eoc}_{\Delta t}}$ \\
      \midrule
      $1/2$ &$3.43\cdot {\bf 10^{-2}}$ &$3.31\cdot {\bf 10^{-2}}$ &$3.29\cdot {\bf 10^{-2}}$ & $3.29\cdot {\bf 10^{-2}}$ &$3.29\cdot {\bf 10^{-2}}$&{\red 3.35}\\
      $1/4$ &$1.96\cdot {\bf 10^{-2}}$ &$1.81\cdot {\bf 10^{-2}}$ &$1.80\cdot {\bf 10^{-2}}$ & $1.80\cdot {\bf 10^{-2}}$ &$1.80\cdot {\bf 10^{-2}}$ &{\red 4.04}\\
      $1/8$ &$1.86\cdot {\bf 10^{-2}}$ &$1.71\cdot {\bf 10^{-2}}$ &$1.70\cdot {\bf 10^{-2}}$ & $1.70\cdot {\bf 10^{-2}}$ &$1.70\cdot {\bf 10^{-2}}$ &{\red 4.05}\\
      \midrule
      $g_{\Delta t}$ &$1.85\cdot {\bf 10^{-2}}$ &$1.70\cdot {\bf 10^{-2}}$ &$1.69\cdot {\bf 10^{-2}}$ &\multicolumn{1}{c|}{$1.69\cdot {\bf 10^{-2}}$} &0 &-\\
      ${\bl {\rm eoc}_h}$ &{\bl 3.88} &{\bl 3.91} &{\bl 3.90} &\multicolumn{1}{c|}{\bl 3.90} &{\bl 0.57} &\\
      \bottomrule
    \end{tabular}   
  \end{center}
  \caption{\label{tab:StokesP2BDF2rect}Errors for the fully discrete solutions for $P_2$ finite elements and BDF(2) for different mesh and time-step sizes. The \textit{experimental orders of 
  convergence} (eoc) have been computed as in Table~\ref{tab:Stokes}.}
\end{table}

\section{Conclusion}
\label{sec.concl}

We have derived a detailed a priori error analysis for two Eulerian time-stepping schemes based on backward difference formulas applied to the non-stationary Stokes equations on 
time-dependent domains. Following Schott~\cite{SchottDiss} and Lehrenfeld \& Olshanskii~\cite{LehrenfeldOlshanskii} discrete quantities are extended implicitly by means 
of ghost penalty terms to a larger domain, which is needed in the following step of the time-stepping scheme.

In particular, we have shown optimal-order error estimates for the $L^2(H^1)$-semi-norm and the $L^2(L^2)$-norm error for the velocities. 
The main difficulties herein consisted in the transfer 
of quantities between domains $\Omega^n$ and $\Omega^{n-1}$ at different time-steps and in the estimation of the pressure error. Optimal $L^2(H^1)$-norm errors for the pressure can be derived {\sfrei under the inverse CFL conditions $\Delta t \geq ch^2$ for the CIP pressure stabilisation and BDF(1) ($\Delta t \geq ch$ for BDF(2)), or unconditionally, 
when the Brezzi-Pitk\"aranta pressure stabilisation is used}. Fortunately, these estimates 
are sufficient to show optimal bounds for the velocities in both the $L^2(H^1)$- and the $L^2(L^2)$-norms. All these estimates are in good agreement with the numerical results presented. 

For the $L^2(L^2)$-norm error of the pressure, we have shown suboptimal bounds in terms of the time step $\Delta t$. The derivation of optimal bounds seems to be non-trivial and needs to be investigated 
in future work. {\sfrei Moreover, it would be interesting to further investigate if the exponential growth in the stability and error estimates can indeed be observed in numerical computations, for example by considering more complex domain motions.}

Further directions of research are the application of the approach to the non-linear Navier-Stokes equations, multi-phase flows and 
fluid-structure interactions, as well as the investigation of 
different time-stepping schemes, such as Crank-Nicolson or the fractional-step $\theta$ scheme within the framework presented and investigated in the 
present work.

\begin{acknowledgement}
The first author acknowledges support by the EPSRC grant EP/P01576X/1.
The second author was supported by the DFG Research Scholarship FR3935/1-1.
The work of the third author was funded by the Swedish Research Council under Starting Grant 2017-05038.
\end{acknowledgement}

\clearpage

\section*{Appendix: Proof of Lemma~\ref{lem.wellposed}}

\begin{proof}
Our proof is similar to the one given in~\cite{Bock1977} for the non-linear Navier-Stokes equations. As usual, we start by showing 
existence and uniqueness for the velocities $\u$ by considering a reduced problem in the space of divergence-free trial and test functions
\begin{eqnarray}
\begin{aligned}\label{RedSpaces}
 {\cal V}_0(t) &:= \{\u \in {\cal V}(t), \; {\div\, \u} = 0 \; \text{a.e. in } \Omega(t)\}, \\
 {\cal V}_{0,I} &:= \{ \u\in L^2(I, {\cal V}_0(t)), \; \partial_t \u \in L^2(I, {\cal L}(t)^d)\}.
\end{aligned}
\end{eqnarray}
The reduced problem is given by: \textit{Find } $\u\in {\cal V}_{0,I}$ such that
\begin{align}\label{RedStokes}
 (\partial_t \u, \v)_{\Omega(t)} +  (\nabla \u, \nabla \v)_{\Omega(t)}  &= (\f, \v)_{\Omega(t)} \quad \forall \v \in {\cal V}_0(t) &&\text{a.e. in } t \in I,\\
 \u(x,0) &= \u^0(x) &&\text{a.e. in } \Omega(0).
\end{align}
It can be easily seen that $\u\in {\cal V}_{0,I}$ is a solution to \eqref{RedStokes} if and only if it is the velocity part of a solution to \eqref{Stokes}.

(i) \textit{Transformation}: By means of the map $\T$ in \eqref{map}, we can transform the system of equations to an equivalent system on $\Omega(0)$: \textit{Find } $\hat \u\in \hat{\cal V}_{0,I}$ such that
\begin{eqnarray}
\begin{aligned}\label{RedStokesALE}
 \left(J(t) (\partial_t \hat \u -F^{-1}(t) \partial_t \T(t) \cdot \hat\nabla \hat \u ), \hat v\right)_{\Omega(0)} 
 &+  (J(t) \hat\nabla \hat \u F(t)^{-1}, \hat\nabla \hat \v F(t)^{-1})_{\Omega(0)} \\
 &= (J(t) \hat \f, \hat \v)_{\Omega(0)} \quad \forall \hat \v \in \hat V_0 &&\text{a.e. in } t \in I,\\
 \hat{\u}(x,0) &= \hat{\u}^0(x) \quad &&\text{a.e. in } \Omega(0),
\end{aligned}
\end{eqnarray}
where $F=\hat\nabla \T, J={\rm det}\, F$, $\hat\nabla$ denotes derivatives with respect to $\Omega(0)$ and quantities with a ``hat'' correspond to their counterparts without a hat by the relation
\begin{align*}
 \hat \u(x, t) = \u(\T(x,t),t) \quad \text{for } x\in \Omega(0).
\end{align*}
Test and trial spaces are defined as
\begin{align*}
 \hat {\cal V}_0(t) &:= \{\hat \u \in {\cal V}(0), \; {\hat\div (J(t) F(t)^{-1} \hat \u)} = 0 \; \text{a.e. in } \Omega(0)\}, \\
 \hat{\cal V}_{0,I} &:= \{ \hat \u\in L^2(I, \hat {\cal V}_0(t)), \; \partial_t \hat\u \in L^2(I, {\cal L}(0)^d)\}.
\end{align*}
Given that $\T$ is a $W^{1,\infty}$-diffeomorphism, it can be shown that~\cite{FailerDiss}
\begin{align*}
 \u \in {\cal V}_0(t) \; \Leftrightarrow \;  \hat \u \in \hat {\cal V}_0(t), \quad \u \in {\cal V}_{0,I} \; \Leftrightarrow \;  \hat \u \in \hat {\cal V}_{0,I}.
\end{align*}

We will show the well-posedness of \eqref{RedStokesALE} by a Galerkin argumentation. A basis $\left\{\hat{\w}_j\right\}_{j\in\mathbb{N}}$ of the time-dependent space $\hat{\cal V}_0(t)$ is given by the inverse Piola transform of an $L^2$-orthonormal basis $\left\{\hat{\phi}_j\right\}_{j\in\mathbb{N}}$ of the space $\hat{\cal V}_0(0)$
\begin{align*}
\hat{\w}_j(t) = J(t)^{-1}F(t) \hat{\phi}_j, \quad j\in\mathbb{N}.
\end{align*}
Under the given regularity assumptions on the domain movement $T$, the basis functions lie in $W^{1,\infty}(I, H^1(\Omega(0))^d)$. 

(ii) \textit{Galerkin approximation:} The ansatz
\begin{align*}
\hat{\u}_l = \sum_{j=1}^l \alpha_j(t) \hat{\w}_j(t)
\end{align*}
with coefficients $\alpha_j(t)\in\mathbb{R}$
leads to the Galerkin problem
\begin{eqnarray}
\begin{aligned}\label{GalerkinALE}
 \left(J(t) (\partial_t \hat \u_l -F^{-1}(t) \partial_t \T(t) \cdot \hat\nabla \hat \u_l ), \hat \w_k\right)_{\Omega(0)} 
 &+  (J(t) \hat\nabla \hat \u_l F(t)^{-1}, \hat\nabla \hat \w_k F(t)^{-1})_{\Omega(0)} \\
 &= (J(t) \hat \f, \hat \w_k)_{\Omega(0)} \quad k=1,...,l,\\
 \hat{\u}_l(x,0) &= \hat{\u}_l^0(x) \quad \text{a.e. in } \Omega(0),
\end{aligned}
\end{eqnarray}
where $\hat{\u}_l^0$ is an $L^2$-orthogonal projection of $\hat{\u}^0$ onto span$\{\hat{\w}_1,...,\hat{\w}_l\}$.
This is a system of ordinary differential equation for the coefficients $\alpha_j(t), j=1,...,l$
\begin{eqnarray}
\begin{aligned}\label{GalerkinALE2}
 \sum_{j=1}^l \alpha_j'(t) &\underbrace{\left(J(t) \hat \w_j, \hat \w_k\right)_{\Omega(0)}}_{M(t)} + \alpha_j(t) \underbrace{\left(J(t) (\partial_t \hat \w_j
   -F^{-1}(t) \partial_t \T(t) \cdot \hat\nabla \hat \w_j ), \hat \w_k\right)_{\Omega(0)}}_{B(t)} \\
 &+  \alpha_j(t) \underbrace{(J(t) \hat\nabla \hat \w_j F(t)^{-1}, \hat\nabla \hat \w_k F(t)^{-1})_{\Omega(0)}}_{A(t)} 
 = \underbrace{(J(t) \hat \f, \hat \w_k)_{\Omega(0)}}_{b(t)} \quad k=1,...,l. 
\end{aligned}
\end{eqnarray}
The assumption that $T$ describes a $W^{1,\infty}(\Omega(0))$ diffeomorphism implies that
\begin{align*}
0 < J_{\min} < J(t) < J_{\max} < \infty, \qquad J_{\min}, J_{\max}\in\mathbb{R}.
\end{align*}
It follows that the matrix $M(t)$ is invertible for all $t\in I$ and we can write \eqref{GalerkinALE2} as
\begin{align}\label{Galerkin3}
\alpha' = -M(t)^{-1} (A(t) + B(t)) \alpha + M(t)^{-1} b(t). 
\end{align}
Due to the time regularity of the basis functions $\partial_t \hat{\w}_j = \partial_t (J^{-1}F) \hat{\phi}_j \in L^{\infty}(I, H^1(\Omega(0))^d)$ the right-hand side in \eqref{Galerkin3} is Lipschitz. Hence, the Picard-Lindel\"of theorem guarantees a unique solution to \eqref{GalerkinALE}.

(iii) \textit{A priori estimate}: We test \eqref{RedStokesALE} with $\hat{\w} = \hat{\u}_l$. After some basic calculus, we obtain the system
\begin{eqnarray*}
\begin{aligned}
 \big(\partial_t (J^{1/2}\hat{\u}_l), J^{1/2}&\hat{\u}_l\big)_{\Omega(0)}
  -\left(\partial_t (J^{1/2})\hat{\u}_l + J^{1/2}F^{-1} \partial_t \T \cdot \hat\nabla \hat \u_l, J^{1/2} \hat \u_l\right)_{\Omega(0)}\\ 
 &+  (J^{1/2} \hat\nabla \hat \u_l F^{-1}, J^{1/2}\hat\nabla \hat \u_l F^{-1})_{\Omega(0)}
 = (J^{1/2} \hat \f, J^{1/2}\hat \u_l)_{\Omega(0)}, 
\end{aligned}
\end{eqnarray*}
where we have skipped the dependencies of $J$ and $F$ on time for better readability. Integration in time gives the estimate
 \begin{eqnarray*}
\begin{aligned}
 \big\| J^{1/2}(t_{\rm fin})&\hat{\u}_l(t_{\rm fin})\big\|_{\Omega(0)}^2
 +  \int_0^{t_{\rm fin}} \left\| J^{1/2} \hat\nabla \hat \u_l F^{-1}\right\|_{\Omega(0)}^2\, dt\\
 &\leq \left\|\hat{\u}_l(0)\right\|_{\Omega(0)}^2
 +  c\|\partial_t \T\|_{W^{1,\infty}(\Omega(0))} \int_0^{t_{\rm fin}} \|J^{1/2} \hat{\u}_l \|_{\Omega(0)}^2 \, dt 
 + c\int_0^{t_{\rm fin}} \left\|J^{1/2} \hat \f\right\|_{\Omega(0)}^2\, dt. 
\end{aligned}
\end{eqnarray*}
Using Gronwall's lemma, we obtain the first a priori estimate
 \begin{eqnarray}\label{firstAPriori}
\begin{aligned}
 \big\| J^{1/2}({t_{\rm fin}})&\hat{\u}_l({t_{\rm fin}})\big\|_{\Omega(0)}^2
 +  \int_0^{t_{\rm fin}} \left\| J^{1/2} \hat\nabla \hat \u_l F^{-1}\right\|_{\Omega(0)}^2\, dt\\
 &\leq c\exp\left(c\|\partial_t \T\|_{W^{1,\infty}(\Omega(0))} t_{\rm fin}\right) \left(\left\|\hat{\u}_l(0)\right\|_{\Omega(0)}^2
 + c\int_0^{t_{\rm fin}} \left\|J^{1/2} \hat \f\right\|_{\Omega(0)}^2\, dt. \right)
\end{aligned}
\end{eqnarray}
This implies that $\hat{\u}_l$ is bounded in $L^{\infty}(I,L^2(\Omega(0))^d)$ and $L^2(I,\hat{\cal V}_0)$. This implies the existence of convergent subsequences and limit functions $\hat{u},\hat{u}^*$ in the following sense
\begin{eqnarray}
\begin{aligned}\label{uconv}
\hat{\u}_{l'} &\to \hat{\u}^* \quad \text{weak star in } L^{\infty}(I,L^2(\Omega(0))^d),\\
\hat{\u}_{l'} &\to \hat{\u} \quad \text{ weakly in } L^2(I,\hat{\cal V}_0)) \text{ and strongly in } L^2(I,L^2(\Omega(0))^d).  
\end{aligned}
\end{eqnarray}
It is not difficult to prove that $\hat{\u}^* = \hat{\u}$, see~\cite{Temam2000}, Section III.1.3.\\
 (iv) \textit{A priori estimate for the time derivative}: In principle, we would like to test~\eqref{GalerkinALE} with $\partial_t \hat{u}_l$. Unfortunately, this is not possible, as in general $\partial_t \hat{u}_l\not\in \text{span}(\hat{\w}_1,...,\hat{\w}_l)$ due to the time-dependence of the basis functions. Instead, we can test with $J^{-1}F^T \partial_t (JF^{-T}\hat{\u}_l)$, as
 \begin{align*}
 J^{-1}F \partial_t (JF^{-1}\hat{\u}_l) 
 = \sum_{j=1}^l J^{-1}F \partial_t (JF^{-1}\alpha_j \hat{\w}_j)
 &= \sum_{j=1}^l J^{-1}F \partial_t (\alpha_j \hat{\phi}_j)
 = \sum_{j=1}^l \alpha_j'J^{-1}F \hat{\phi}_j
 = \sum_{j=1}^l \alpha_j' \hat{\w}_j.
 \end{align*} 
 We obtain
 \begin{eqnarray}
\begin{aligned}
 \big(\partial_t \hat \u_l, F \partial_t (JF^{-1}&\hat{\u}_l)\big)_{\Omega(0)} -
 \left(\partial_t \T \cdot \hat\nabla \hat \u_l , \partial_t (JF^{-1}\hat{\u}_l)\right)_{\Omega(0)} \\
 &+  (J \hat\nabla \hat \u_l F^{-1}, \hat\nabla (J^{-1}F \partial_t (JF^{-1}\hat{\u}_l) F^{-1})_{\Omega(0)} 
 = ( \hat \f, F \partial_t (JF^{-1}\hat{\u}_l))_{\Omega(0)}.
 \end{aligned}
\end{eqnarray}
The third term on the left-hand side is well-defined under the regularity assumptions stated, as $JF^{-1}$ is the cofactor matrix to $F$, which
can be written in terms of $T$. Using the product rule, we see that the first term on the left-hand side is bounded below by
\begin{align*}
\big(\partial_t \hat \u_l, F \partial_t (JF^{-1}\hat{\u}_l)\big)_{\Omega(0)} &= \big(\partial_t \hat \u_l, J \partial_t \hat{\u}_l\big)_{\Omega(0)} + \big(\partial_t \hat \u_l, F \partial_t (JF^{-1})\hat{\u}_l\big)_{\Omega(0)}\\
&\geq \left\|J^{1/2}\partial_t \hat \u_l\right\|_{\Omega(0)}^2 -c(\T) \left\|J^{1/2}\partial_t \hat \u_l\right\|_{\Omega(0)} \left\|J^{1/2}\hat \u_l\right\|_{\Omega(0)}
\end{align*}
For the third term on the left-hand side, we have
\begin{align*}
(J \hat\nabla &\hat \u_l F^{-1}, \hat\nabla (J^{-1}F \partial_t (JF^{-1}\hat{\u}_l) F^{-1})_{\Omega(0)} \\
&= (J \hat\nabla \hat \u_l F^{-1}, \hat\nabla \partial_t \hat{\u}_l F^{-1})_{\Omega(0)} 
+ (J \hat\nabla \hat \u_l F^{-1}, \hat\nabla\left( J^{-1}F \partial_t (JF^{-1})\hat{\u}_l\right) F^{-1})_{\Omega(0)} \\
&= \left(J^{1/2} \hat\nabla \hat \u_l F^{-1}, \hat \partial_t \left(J^{1/2}\hat{\nabla} \hat{\u}_l F^{-1}\right)\right)_{\Omega(0)} 
- \left(J^{1/2} \hat\nabla \hat \u_l F^{-1}, \hat\nabla\left( \hat{\u}_l \partial_t (J^{1/2}F^{-1})\right)\right)_{\Omega(0)} 
\\
&\qquad\qquad+ (J \hat\nabla \hat \u_l F^{-1}, \hat\nabla\left( J^{-1}F \partial_t (JF^{-1})\hat{\u}_l\right) F^{-1})_{\Omega(0)}\\
&\geq \frac{1}{2} \partial_t \left\|J^{1/2} \hat\nabla \hat \u_l F^{-1}\right\|_{\Omega(0)}^2
- c(\T) \left\|J^{1/2} \hat\nabla \hat \u_l F^{-1}\right\|_{\Omega(0)}^2 
\end{align*}
Using a similar argumentation and Young's inequality, we can show the bounds
\begin{align*}
\left(\partial_t \T \cdot \hat\nabla \hat \u_l , \partial_t (JF^{-1}\hat{\u}_l)\right)_{\Omega(0)} &\leq c(\T) \left\|J^{1/2} \hat\nabla \hat \u_l F^{-1}\right\|_{\Omega(0)}^2 + 
\frac{1}{4} \left\|J^{1/2}\partial_t \hat \u_l\right\|_{\Omega(0)}^2 \\
\left( \hat \f, F \partial_t (JF^{-1}\hat{\u}_l)\right)_{\Omega(0)} &\leq c(\T) \left(\left\|J^{1/2}\hat \f\right\|_{\Omega(0)}^2  + \left\|J^{1/2} \hat\nabla \hat \u_l F^{-1}\right\|_{\Omega(0)}^2\right) +  \frac{1}{4}\left\|J^{1/2}\partial_t \hat \u_l\right\|_{\Omega(0)}^2.
\end{align*}
Integration over $t\in I$ in \eqref{GalerkinALE} gives the estimate
\begin{align*}
&\left\|J^{1/2}(t_{\rm fin}) \hat\nabla \hat \u_l(t_{\rm fin}) F^{-1}(t_{\rm fin})\right\|_{\Omega(0)}^2 
+ \int_0^{t_{\rm fin}}
\left\|J^{1/2}\partial_t \hat \u_l\right\|_{\Omega(0)}^2 \, dt \\
&\qquad\qquad\qquad\leq  \left\|\hat\nabla \hat \u_l(0)\right\|_{\Omega(0)}^2 
+c(\T) \int_0^{t_{\rm fin}} \left\|J^{1/2}\hat \f\right\|_{\Omega(0)}^2 + \left\|J^{1/2} \hat\nabla \hat \u_l F^{-1}\right\|_{\Omega(0)}^2\,  dt.
\end{align*}
Using Gronwall's lemma we obtain
\begin{align*}
&\left\|J^{1/2}(t_{\rm fin}) \hat\nabla \hat \u_l(t_{\rm fin}) F^{-1}(t_{\rm fin})\right\|_{\Omega(0)}^2 
+ \int_0^{t_{\rm fin}}
\left\|J^{1/2}\partial_t \hat \u_l\right\|_{\Omega(0)}^2 \, dt \\
&\qquad\qquad\qquad\qquad\leq  c \exp(c(\T)t_{\rm fin}) \left( \left\|\hat\nabla \hat \u_l(0)\right\|_{\Omega(0)}^2 + \int_0^{t_{\rm fin}} \left\|J^{1/2}\hat \f\right\|_{\Omega(0)}^2 \,dt\right).
\end{align*}
This shows the boundedness of $\partial_t \hat{\u}_l$ in $L^2(I,L^2(\Omega(0))^d)$
and the convergence of a subsequence (see Temam~\cite{Temam2000}, Proposition III.1.2, for the details)
\begin{align}\label{dtuconv}
\partial_t \hat{\u}_{l'} \to \partial_t \hat{\u} \qquad \text{ weakly in } L^2(I,L^2(\Omega(0))^d).
\end{align}
(v) \textit{Conclusion}: The a priori bounds shown in (ii) and (iii) and the resulting convergence behaviour allows us to pass to the limit $l\to\infty$ in \eqref{GalerkinALE}. The convergences \eqref{uconv} and \eqref{dtuconv} imply that $\hat{\u}_l(x,0) \to \hat{\u}^0\, (l\to\infty)$. We find that the limit $\hat{u}$ is a solution to~\eqref{RedStokesALE}. Uniqueness is easily proven by testing \eqref{RedStokesALE} with $\hat{v}=\hat{u}$ and the a priori estimate~\eqref{firstAPriori}. Due to the equivalence of \eqref{RedStokesALE} and \eqref{RedStokes}, the pullback $u=\hat{u}\circ T^{-1}\in {\cal V}_{0,I}$ is the unique solution to \eqref{RedStokes}.

(vi) \textit{Pressure}: Finally, the unique existence of a pressure for a.e. $t\in I$ follows
by {\sfrei showing the existence of a weak pressure gradient that fulfils}
\begin{align*}
 \text{grad } p(t) = \f(t) +  \Delta \u(t) - \partial_t \u(t) \quad \text{in } \Omega(t)
\end{align*}
{\sfrei using} the de Rham theorem. We refer to~\cite{Temam2000}, Proposition III.1.2, for the details.
\end{proof}

\bibliographystyle{plainnat}

\end{document}